\RequirePackage{iftex}
\ifXeTeX
  \RequirePackage{scrlfile}
  \PreventPackageFromLoading{breakurl}
\fi
\documentclass[final,nohypdvips,onefignum,onetabnum,oneeqnum,onealgnum,onethmnum]{siamart251216}
\usepackage{amssymb,graphicx,booktabs,algorithm,algpseudocode}
\newsiamremark{assumption}{Assumption}
\newsiamremark{remark}{Remark}
\newsiamremark{example}{Example}
\newsiamthm{property}{Property}
\theoremstyle{plain}
\theoremheaderfont{\normalfont\sc}
\theorembodyfont{\normalfont\itshape}

\expandafter\let\csname lemma*\endcsname\relax
\expandafter\let\csname endlemma*\endcsname\relax
\newtheorem{lemma}{Lemma}

\expandafter\let\csname corollary*\endcsname\relax
\expandafter\let\csname endcorollary*\endcsname\relax
\newtheorem{corollary}{Corollary}

\expandafter\let\csname proposition*\endcsname\relax
\expandafter\let\csname endproposition*\endcsname\relax
\newtheorem{proposition}{Proposition}

\expandafter\let\csname definition*\endcsname\relax
\expandafter\let\csname enddefinition*\endcsname\relax
\newtheorem{definition}{Definition}

\expandafter\let\csname property*\endcsname\relax
\expandafter\let\csname endproperty*\endcsname\relax
\newtheorem{property}{Property}
\theoremheaderfont{\normalfont\itshape}
\theorembodyfont{\normalfont}

\expandafter\let\csname assumption*\endcsname\relax
\expandafter\let\csname endassumption*\endcsname\relax
\newtheorem{assumption}{Assumption}

\expandafter\let\csname remark*\endcsname\relax
\expandafter\let\csname endremark*\endcsname\relax
\newtheorem{remark}{Remark}
\DeclareMathOperator*{\argmin}{arg\,min}
\DeclareMathOperator{\prox}{prox}
\DeclareMathOperator{\dom}{dom}
\DeclareMathOperator{\dist}{dist}
\DeclareMathOperator{\range}{range}

\DeclareMathOperator{\supp}{supp}
\newcommand{\R}{\mathbb R}
\newcommand{\E}{\mathcal E}
\newcommand{\V}{\mathcal V}
\newcommand{\Sscale}{\mathcal S}
\newcommand{\Heff}{\mathcal H_0}
\newcommand{\Qeff}{\mathcal Q_{\mathrm{eff}}}
\newcommand{\reff}{r_{\mathrm{eff}}}
\newcommand{\norm}[1]{\left\|#1\right\|}
\newcommand{\ip}[2]{\left\langle #1,#2\right\rangle}
\title{Controlled Block Proximal Linearized Method for Nonconvex and Nonsmooth Optimization}
\author{WeiFeng Yang}
\headers{Controlled Block Proximal Linearized Method}{WeiFeng Yang}

\begin{document}

\maketitle

\begin{abstract}
We consider multiblock nonconvex and nonsmooth optimization without assuming bounded iterates or coercivity of the original objective. We propose the Controlled Block Proximal Linearized (CBPL) method, which incorporates a control potential into block updates while retaining the original objective, variables, and constraints. Under explicit control design conditions and the KL property, the control yields boundedness, finite length, whole-sequence convergence of the original variables, and convergence of objective values. At the limit, one limiting subgradient of the original objective vanishes on all effective comparison directions. We apply CBPL to graph-regularized nonnegative matrix factorization (NMF), CP decomposition, and independent-core matrix tri-factorization (NMTF) with $\ell_0$ norm sparsity constraints. We characterize their reconstruction-preserving scale degeneracies, proving that scale criticality is equivalent to graph collapse and that full criticality forces such collapse. Explicit controls provide compact sublevel sets and a uniform positive lower bound on graph action, including with an independent Tucker core, while retaining genuine reconstruction-changing directions. Each block admits an explicit projection and a computable safe step parameter. For ordinary NMF and CP models, whole-sequence convergence to a full critical point follows. Experiments with direct safe updates demonstrate convergence, sustained graph action, and controlled factor and core scales.
\end{abstract}

\begin{keywords}
nonconvex and nonsmooth optimization, block proximal linearization,
scale degeneracy, graph regularization, sparse nonnegative factorization
\end{keywords}
\begin{AMS}
90C26, 90C30, 15A69
\end{AMS}

\section{Introduction}
\label{sec:introduction}
In many machine learning applications, nonconvex and nonsmooth optimization problems can be expressed as
\begin{equation}
\min_{\{x_i\}_{i=1}^{N}} J(\{x_i\}_{i=1}^{N})
=H(\{x_i\}_{i=1}^{N})+\sum_{i=1}^{N}F_i(x_i),
\label{e11}
\end{equation}
where $x_i\in\R^{d_i}$, $H$ is continuously differentiable and each $F_i:\R^{d_i}\to(-\infty,+\infty]$ is proper and lower semicontinuous, e.g., the indicator of a nonnegativity or $\ell_0$ constraint. In particular, unlike the convergence analyses of PALM \cite{bolte2014proximal}, ABPL$^+$ \cite{yang2023abpl} and nonconvex ADMM \cite{wang2019global}, no boundedness or coercivity assumption is imposed on Eq.~(\ref{e11}). This formulation covers nonnegative matrix factorization \cite{lee1999learning,lee2001algorithms}, sparse tensor decomposition \cite{kolda2009tensor} and graph regularization \cite{cai2011graph}, which are widely used for representation learning and feature extraction.

Block proximal linearization provides explicit updates for many instances of Eq.~(\ref{e11}). PALM \cite{bolte2014proximal} combines a linearization of the smooth coupling with a block proximal step, and establishes a convergence framework based on sufficient decrease, a subgradient estimate and the Kurdyka--\L{}ojasiewicz (KL) property. Block coordinate methods \cite{xu2013block,xu2017globally} and abstract descent analyses \cite{attouch2009convergence,attouch2013convergence} also use these properties to establish convergence of nonconvex sequences. Boundedness supplies accumulation points and uniform local constants in this analysis. Coercivity gives bounded sublevel sets, so a descent sequence remains bounded when the objective is coercive.

Inertial strategies introduce extrapolation into the block updates to improve numerical behavior. For example, iPALM \cite{pock2016ipalm} uses two extrapolation points, ABPL$^+$ \cite{yang2023abpl} combines adaptive momentum with a comparison of proximal and extrapolated steps, and TITAN \cite{hien2023inertial} introduces a general inertial block majorization framework. These extensions design the updates and descent quantities to satisfy the convergence framework. Their whole-sequence convergence analysis still requires control of the generated iterates.

Splitting methods are also used for nonconvex and nonsmooth problems with linear constraints. Wang et al.~\cite{wang2019global} analyze multiblock ADMM, deriving boundedness from coercivity on the feasible set and proving whole-sequence convergence under the KL property. Bai et al.~\cite{bai2025inexact} develop an inexact ADMM with relative-error criteria and an expansion line search.

For factorization models, controlling the iterates is closely connected with factor scales. Practical formulations introduce normalization, scale constraints or positive regularizers to obtain stable representations. Taslaman and Nilsson \cite{taslaman2012framework} study regularized nonnegative matrix factorization with scale control, and Le Roux et al.~\cite{leroux2015sparse} construct sparse nonnegative factorizations with explicit treatment of factor scaling. Gu et al.~\cite{gu2011trivial} investigate trivial solutions and scale transfer in graph-regularized NMF. Cohen and Leplat \cite{cohen2024efficient} analyze homogeneous regularization and factor balancing in scale-invariant low-rank models. Positive quadratic penalties also provide coercivity for scale-balanced sparse factorization methods \cite{he2026scale}. Alternating proximal techniques have been applied to sparse nonnegative Tucker decomposition \cite{xu2015ntd}.

However, the original factorization objectives generally lack coercivity even in elementary NMF. Multiplying one factor by a positive number and dividing another by the same number preserves both the reconstruction and the original hard sparsity constraints. Consequently, boundedness cannot be inferred from the reconstruction error alone. Adding positive penalties changes the objective and its criticality conditions, and imposing norm constraints changes the feasible set of the factors. The resulting convergence guarantees concern these modified models. Establishing convergence for the original noncoercive problem requires a mechanism that controls the iteration without replacing its modeling terms or constraints.

However, graph regularization introduces a more fundamental structural conflict. A scale change can leave the reconstruction unchanged and drive the graph penalty to zero. Let $A_N$ be the factor carrying the graph term and $L$ be its graph matrix. We prove that criticality along all genuine reconstruction-preserving scales is equivalent to $LA_N=0$, and full criticality of the original objective necessarily implies this condition. For a connected combinatorial graph Laplacian, $LA_N=0$ forces every sample representation to be the same. Thus full criticality eliminates the differences that graph regularization is intended to organize. This structural conflict concerns the convergence target itself. A meaningful convergence theory must retain nonzero graph action and establish a first-order condition for the original objective through directions that retain genuine reconstruction changes.

To tackle these challenges, we propose the Controlled Block Proximal Linearized (CBPL) method for Eq.~(\ref{e11}). The method adds a control potential to the smooth block update and keeps the original functions $F_i$. We give control design conditions that provide compact controlled sublevel sets and transfer the convergence certificate to the original objective. This produces a general convergence framework without assuming bounded iterates or coercivity of $J$.

For the graph factorization models, we construct the control explicitly. One part prevents the graph factor from approaching the graph nullspace or escaping in scale, and the other balances the remaining factors with each other or with the independent Tucker core. Each block is obtained by an explicit nonnegative projection with hard thresholding on the sparse blocks. Computable safe parameters guarantee one projected update per block.

The control construction also determines a geometric comparison structure for the original objective. We identify the complete feasible reconstruction fibers, including singular and zero structures, and prove that projection along genuine scale actions retains every reconstruction-changing feasible direction and every change along feasible curves. The control gradient vanishes on the resulting effective directions. Therefore, a subgradient certificate for the controlled energy gives a true limiting-subgradient certificate for the original $J$. For graph-regularized NMF, CP decomposition and independent-core matrix tri-factorization with $\ell_0$-norm constraints, the same control provides compactness and a uniform positive lower bound on graph action. Combining these properties with the KL analysis gives finite length and whole-sequence convergence of all original variables. Ordinary nonnegative matrix and CP factorizations recover the original full criticality condition.

\subsection{Contributions}
The main contributions are summarized as follows.

(1) We establish a complete degeneracy theory for graph-regularized nonnegative matrix factorization, CP decomposition and independent-core matrix tri-factorization with $\ell_0$-norm constraints. We characterize all component scales, independent-core transformations and exact feasible reconstruction fibers, and prove that scale criticality is equivalent to graph collapse. Full criticality of the original graph objective therefore forces collapse, revealing a structural contradiction between the conventional criticality target and preservation of graph representations. This theory includes zero components, singular cores and changes of active support.

(2) We propose the CBPL method for Eq.~(\ref{e11}) and a new control construction that preserves the original objective, feasible set and variables. For the graph-regularized NMF, CP and independent-core Tucker models with $\ell_0$ sparsity constraints on factor blocks, the construction controls graph scale and factor imbalance, producing compact controlled sublevel sets and a uniform positive lower bound on graph action. We derive explicit block projections and finite formulas for safe parameters, giving one projected update per block for these models.

(3) We establish a general convergence framework for CBPL under its control design conditions, without assuming bounded iterates or coercivity of the original objective. These graph-regularized $\ell_0$-constrained factorization constructions satisfy these conditions automatically. The entire original sequence, including every factor and the independent Tucker core, has finite length and converges to a finite limit, and the original objective values converge.

(4) We establish effective stationarity of the original objective at this noncollapsed limit. A single true original limiting subgradient annihilates all effective comparison directions, and the comparison structure retains genuine reconstruction changes. For problems involving ordinary nonnegative matrices and CP decomposition categories, the theory recovers convergence of the whole sequence to a full critical point. Numerical experiments on graph-regularized $\ell_0$-constrained NMF, CP and independent-core matrix tri-factorization examine the convergence and structural properties with direct safe-parameter updates.

\section{Notation and preliminaries}
\label{sec:preliminaries}
In this section, we introduce the notation and definitions used in the construction and convergence analysis of CBPL. All spaces are finite dimensional. We use the Euclidean inner product and the Frobenius norm on matrix and tensor blocks. For a block vector $x=(x_1,\ldots,x_N)$, $\norm{x}^2=\sum_{i=1}^N\norm{x_i}^2$. For a set $E$, $\dist(x,E)=\inf_{y\in E}\norm{x-y}$, with $\inf\varnothing=+\infty$. The domain, range and kernel of a mapping are denoted by $\dom$, $\range$ and $\ker$, respectively.
For a matrix or tensor $A$, $\norm A_0$ denotes the number of its nonzero entries, $\supp(A)$ is their index set, and $[A]_+$ is its entrywise positive part.

\subsection{Functions, proximal mappings and smoothness}
We first recall basic definitions for extended-valued functions \cite{rockafellar1998variational} and proximal mappings \cite{bolte2014proximal}.

\begin{definition}\label{def:functions}
Let $F:\R^d\to(-\infty,+\infty]$. Its effective domain is $\dom F=\{x:F(x)<+\infty\}$, and $F$ is proper if $\dom F\ne\varnothing$. It is lower semicontinuous at $x$ if
\[
F(x)\leq\liminf_{\ell\to\infty}F(x^\ell)
\quad\text{whenever }x^\ell\to x.
\]
The function is bounded below if $F(x)\geq m$ for some $m\in\R$ and every $x$. It is coercive if $F(x)\to+\infty$ as $\norm{x}\to+\infty$. Its sublevel set at $M\in\R$ is $\{x:F(x)\leq M\}$.
\end{definition}

A lower semicontinuous coercive function has compact finite sublevel sets. In this paper, compactness will instead be obtained from the control construction. For a set $E$, its indicator is $\delta_E(x)=0$ on $E$ and $+\infty$ outside $E$. It is proper and lower semicontinuous when $E$ is nonempty and closed.

\begin{definition}\label{def:prox}
For a proper function $F$, the proximal mapping is
\begin{equation}
\prox_{\sigma F}(u)=\argmin_x\left\{F(x)+\frac{1}{2\sigma}\norm{x-u}^2\right\},\qquad \sigma>0.
\label{eq:prox}
\end{equation}
For $F=\delta_E$, it is the Euclidean projection $P_E(u)=\argmin_{x\in E}\norm{x-u}^2$.
\end{definition}
The proximal mapping can be set valued. For the nonnegative sparse sets used below, its values are obtained by explicit projection formulas.

\begin{definition}\label{def:smoothness}
Let $f$ be continuously differentiable on an open set $U$. Its gradient is Lipschitz continuous on a set $K\subset U$ if there is $L_K<+\infty$ such that
\[
\norm{\nabla f(x)-\nabla f(y)}\leq L_K\norm{x-y}
\quad(x,y\in K).
\]
It is locally Lipschitz continuous on $U$ if each point of $U$ has a neighborhood on which this inequality holds. A block-gradient Lipschitz constant is defined by fixing the other blocks and applying this inequality to the selected block.
\end{definition}

\begin{proposition}\label{prop:descentlemma}
If $\nabla f$ is $L$-Lipschitz continuous on a set containing the segment from $x$ to $x+d$, then
\begin{equation}
f(x+d)\leq f(x)+\ip{\nabla f(x)}d+\frac L2\norm d^2.
\label{eq:descentlemma}
\end{equation}
\end{proposition}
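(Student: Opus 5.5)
The plan is the standard integral form of the mean value theorem along the segment. The segment $\{x+td: t\in[0,1]\}$ is contained in a set on which $\nabla f$ is $L$-Lipschitz, and $f$ is continuously differentiable on an open set containing it (Definition~\ref{def:smoothness}). So the scalar function $\varphi(t)=f(x+td)$ is continuously differentiable on $[0,1]$, with $\varphi'(t)=\ip{\nabla f(x+td)}{d}$. The fundamental theorem of calculus then gives
\[
f(x+d)-f(x)=\int_0^1\ip{\nabla f(x+td)}{d}\,dt .
\]

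Next subtract $\ip{\nabla f(x)}{d}=\int_0^1\ip{\nabla f(x)}{d}\,dt$ from both sides. This leaves the remainder
\[
f(x+d)-f(x)-\ip{\nabla f(x)}{d}=\int_0^1\ip{\nabla f(x+td)-\nabla f(x)}{d}\,dt .
\]
I will bound the integrand using the Cauchy--Schwarz inequality, and then the Lipschitz inequality applied to the two points $x+td$ and $x$, both of which lie on the segment. This yields the pointwise bound $\norm{\nabla f(x+td)-\nabla f(x)}\,\norm{d}\le Lt\norm{d}^2$. Integrating, $\int_0^1 Lt\,dt=L/2$, which gives \eqref{eq:descentlemma}.

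The only point needing care is that the Lipschitz hypothesis is assumed only on a set containing the segment, not globally. Every point used in the argument, namely $x+td$ for $t\in[0,1]$ together with $x$ itself, lies on that segment, so the hypothesis suffices and no further obstacle is expected. Continuity of $\varphi'$, needed for the Riemann integral, follows from $f\in C^1$ on the open set $U$.
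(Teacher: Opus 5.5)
Your proposal is correct and follows the paper's proof essentially step for step: apply the fundamental theorem of calculus along the segment, subtract the linear term, and integrate the bound $\langle \nabla f(x+td)-\nabla f(x), d\rangle \leq Lt\|d\|^2$ over $[0,1]$. Your added remark, that every point used lies on the segment so only the local Lipschitz hypothesis is needed, is precisely why the paper's one-line argument suffices.
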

\begin{proof}
The fundamental theorem of calculus gives
\[
f(x+d)-f(x)-\ip{\nabla f(x)}d
=\int_0^1\ip{\nabla f(x+td)-\nabla f(x)}d\,dt
\leq\int_0^1 Lt\norm d^2\,dt.
\]
This proves the inequality.
\end{proof}

\subsection{Subdifferentials, normal cones and feasible directions}
The following subdifferential and normal-cone definitions \cite{rockafellar1998variational} describe the first-order conditions of the original objective, including points where an active support changes.

\begin{definition}\label{def:subdifferential}
For a proper lower semicontinuous function $J$ and $x\in\dom J$, the Fr\'echet subdifferential is
\[
\widehat\partial J(x)=\left\{g:\liminf_{y\to x,\,y\ne x}
\frac{J(y)-J(x)-\ip{g}{y-x}}{\norm{y-x}}\geq0\right\}.
\]
The limiting subdifferential $\partial J(x)$ consists of all limits $g^\ell\to g$ with $g^\ell\in\widehat\partial J(x^\ell)$, $x^\ell\to x$ and $J(x^\ell)\to J(x)$. Both subdifferentials are empty outside $\dom J$. A full critical point satisfies $0\in\partial J(x)$.
\end{definition}

\begin{definition}\label{def:normals}
For a closed set $E$ and $x\in E$, the regular normal cone is
\[
\widehat N_E(x)=\left\{n:\limsup_{\substack{y\to x,\ y\in E\\y\ne x}}
\frac{\ip n{y-x}}{\norm{y-x}}\leq0\right\}.
\]
The limiting normal cone $N_E(x)$ consists of limits $n^\ell\to n$ with $x^\ell\in E$, $x^\ell\to x$ and $n^\ell\in\widehat N_E(x^\ell)$. Equivalently, $\widehat N_E=\widehat\partial\delta_E$ and $N_E=\partial\delta_E$.
\end{definition}

\begin{proposition}\label{prop:subdifferential-calculus}
If a proper lower semicontinuous function $J$ has a local minimum at $x\in\dom J$, then $0\in\widehat\partial J(x)\subseteq\partial J(x)$. For a continuously differentiable function $C$, smooth addition gives
\begin{equation}
\begin{aligned}
\widehat\partial(J+C)(x)=\widehat\partial J(x)+\nabla C(x),
\\
\partial(J+C)(x)=\partial J(x)+\nabla C(x).
\end{aligned}
\label{eq:smoothsum}
\end{equation}
In particular, for $J=H+\delta_E$,
\begin{equation}
\partial J(x)=\nabla H(x)+N_E(x).
\label{eq:sumrule}
\end{equation}
The limiting subdifferential is closed under value-compatible limits: if $x^\ell\to x$, $J(x^\ell)\to J(x)$, $g^\ell\to g$ and $g^\ell\in\partial J(x^\ell)$, then $g\in\partial J(x)$.
\end{proposition}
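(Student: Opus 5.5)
The proposition has four parts, and I would prove them in order directly from Definition~\ref{def:subdifferential}, in the spirit of \cite{rockafellar1998variational}.

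\textbf{Local minima.} Suppose $J(y)\ge J(x)$ for all $y$ in a neighborhood of $x$. Then the difference quotient with $g=0$ is nonnegative near $x$, so its $\liminf$ is nonnegative and $0\in\widehat\partial J(x)$. The inclusion $\widehat\partial J(x)\subseteq\partial J(x)$ follows by taking constant sequences $x^\ell=x$ and $g^\ell=g$ in the definition of the limiting subdifferential.

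\textbf{Smooth addition, Fr\'echet case.} Since $C$ is continuously differentiable,
\[
C(y)-C(x)-\ip{\nabla C(x)}{y-x}=o(\norm{y-x}).
\]
Adding this to the quotient for $J$ at $g$ gives the quotient for $J+C$ at $g+\nabla C(x)$ with the same $\liminf$. Hence $\widehat\partial(J+C)(x)\supseteq\widehat\partial J(x)+\nabla C(x)$. The reverse inclusion follows by applying the same argument to $J+C$ and the smooth function $-C$.

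\textbf{Smooth addition, limiting case.} Take $g^\ell\in\widehat\partial(J+C)(x^\ell)$ with $x^\ell\to x$, $(J+C)(x^\ell)\to (J+C)(x)$ and $g^\ell\to g$. Continuity of $C$ gives $J(x^\ell)\to J(x)$, and continuity of $\nabla C$ gives $g^\ell-\nabla C(x^\ell)\to g-\nabla C(x)$. By the Fr\'echet case, $g^\ell-\nabla C(x^\ell)\in\widehat\partial J(x^\ell)$, so $g-\nabla C(x)\in\partial J(x)$. The argument is symmetric, so equality holds. Equation \eqref{eq:sumrule} is then the case $J=\delta_E$, $C=H$, using $\partial\delta_E=N_E$ from Definition~\ref{def:normals}.

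\textbf{Closedness.} This is the only step needing care, because the $g^\ell$ lie in limiting rather than Fr\'echet subdifferentials. I would use a diagonal argument. For each $\ell$, the definition supplies a point $y^\ell$ and $h^\ell\in\widehat\partial J(y^\ell)$ with
\[
\norm{y^\ell-x^\ell}<1/\ell,\qquad |J(y^\ell)-J(x^\ell)|<1/\ell,\qquad \norm{h^\ell-g^\ell}<1/\ell.
\]
Then $y^\ell\to x$, $J(y^\ell)\to J(x)$ and $h^\ell\to g$, so $g\in\partial J(x)$.

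The only subtle points are two checks in this argument:
\begin{itemize}
\item the value-compatibility condition $J(x^\ell)\to J(x)$ must be carried through each limit;
\item each $x^\ell$ must lie in $\dom J$, which holds because $\partial J(x^\ell)$ is nonempty.
\end{itemize}
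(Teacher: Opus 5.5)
Your argument is correct and follows the route the paper indicates: the paper gives no details beyond saying these facts follow from the definitions and the first-order expansion of $C$ \cite{rockafellar1998variational}, and your local-minimum, Fr\'echet-sum, limiting-sum and diagonal closedness steps supply exactly those details. One hypothesis should be stated explicitly: your limiting-sum and closedness steps use that $J(x)$ is finite (the standing assumption $x\in\dom J$), and without it the closedness claim fails, for instance when local minimizers $x^\ell\to x$ have $J(x^\ell)\to J(x)=+\infty$.
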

These properties follow from the subdifferential definitions and smooth first-order expansion \cite{rockafellar1998variational}.

\begin{definition}\label{def:tangent-fiber}
The contingent tangent cone of $E$ at $x\in E$ is
\begin{equation}
T_E(x)=\left\{d:\exists\,t_\ell\downarrow0,\ x^\ell\in E,
\ (x^\ell-x)/t_\ell\to d\right\}.
\label{eq:tangent}
\end{equation}
For a reconstruction map $Z$, the exact feasible reconstruction fiber and its tangent cone are
\begin{equation}
E_f(x)=\{y\in E:Z(y)=Z(x)\},\qquad
\V(x)=T_{E_f(x)}(x).
\label{eq:fiber}
\end{equation}
\end{definition}
For a differentiable reconstruction map, $\V(x)\subseteq T_E(x)\cap\ker DZ(x)$. The exact fiber will be used to distinguish reconstruction-preserving feasible variations from genuine reconstruction changes, including changes at singular points.

\subsection{The KL property and finite length}
The KL property connects subgradient estimates with function decrease in descent convergence analyses \cite{attouch2013convergence,bolte2014proximal}. We use it to prove finite length of the original-variable sequence.

\begin{definition}\label{def:KL}
A proper lower semicontinuous function $\Phi$ has the Kurdyka--\L{}ojasiewicz (KL) property at $\bar x\in\dom\partial\Phi$ if there are a neighborhood $U$ of $\bar x$, $\eta>0$ and a continuous concave function $\varphi:[0,\eta)\to\R_+$, continuously differentiable on $(0,\eta)$, with $\varphi(0)=0$ and $\varphi'>0$, such that
\begin{equation}
\varphi'(\Phi(x)-\Phi(\bar x))\dist(0,\partial\Phi(x))\geq1
\label{eq:KL}
\end{equation}
whenever $x\in U$ and $\Phi(\bar x)<\Phi(x)<\Phi(\bar x)+\eta$. It is a KL function if this property holds at every point of $\dom\partial\Phi$.
\end{definition}

A set is semialgebraic if it is a finite union of sets described by finitely many polynomial equalities and inequalities. A function is semialgebraic when its graph over its effective domain is semialgebraic. Proper lower semicontinuous semialgebraic functions have the KL property, and locally admit a power desingularizing function \cite{bolte2007lojasiewicz,bolte2007clarke}.

\begin{proposition}[Uniform KL property]\label{prop:uniformKL}
Let $\Omega$ be a compact subset of $\dom\partial\Phi$. Suppose that $\Phi$ is proper and lower semicontinuous, is constant on $\Omega$ with value $\ell$, and has the KL property at every point of $\Omega$. Then there are $\varepsilon>0$, $\eta>0$ and one desingularizing function $\varphi$ as in Definition~\ref{def:KL} such that
\begin{equation}
\varphi'(\Phi(x)-\ell)\dist(0,\partial\Phi(x))\geq1
\label{eq:uniformKL}
\end{equation}
whenever $\dist(x,\Omega)<\varepsilon$ and $\ell<\Phi(x)<\ell+\eta$.
\end{proposition}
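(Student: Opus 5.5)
The argument is the standard compactness argument of Bolte--Sabach--Teboulle: cover $\Omega$ by finitely many KL neighborhoods, then merge the finitely many desingularizing functions into one.

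\emph{Step 1 (pointwise data).} For each $\bar x\in\Omega$, Definition~\ref{def:KL} provides data $U_{\bar x}$, $\eta_{\bar x}$, $\varphi_{\bar x}$. Since $\Phi(\bar x)=\ell$ for every $\bar x\in\Omega$, the KL inequality at $\bar x$ reads
\[
\varphi_{\bar x}'(\Phi(x)-\ell)\dist(0,\partial\Phi(x))\geq1
\]
for $x\in U_{\bar x}$ with $\ell<\Phi(x)<\ell+\eta_{\bar x}$. Shrink each $U_{\bar x}$ to an open ball $B(\bar x,2r_{\bar x})$; this keeps the inequality.

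\emph{Step 2 (finite subcover and uniform constants).} The smaller balls $B(\bar x,r_{\bar x})$ cover the compact set $\Omega$, so finitely many of them suffice, centered at $\bar x_1,\dots,\bar x_p$. Set
\[
\varepsilon=\min_i r_{\bar x_i},\qquad \eta=\min_i\eta_{\bar x_i}.
\]
Take $x$ with $\dist(x,\Omega)<\varepsilon$. Compactness gives a nearest point $z\in\Omega$, and $z\in B(\bar x_i,r_{\bar x_i})$ for some $i$. Then
\[
\norm{x-\bar x_i}<\varepsilon+r_{\bar x_i}\leq 2r_{\bar x_i}.
\]
So $x$ lies in the $i$-th KL neighborhood, and if also $\ell<\Phi(x)<\ell+\eta$, the $i$-th KL inequality applies at $x$.

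\emph{Step 3 (one desingularizing function).} Define
\[
\varphi(s)=\sum_{i=1}^p\varphi_{\bar x_i}(s)\qquad\text{on }[0,\eta).
\]
This $\varphi$ is continuous, concave, $C^1$ on $(0,\eta)$, satisfies $\varphi(0)=0$, and has $\varphi'>0$. Because each $\varphi_{\bar x_j}'>0$, we have $\varphi'(s)\geq\varphi_{\bar x_i}'(s)$ for every $i$. Hence
\[
\varphi'(\Phi(x)-\ell)\dist(0,\partial\Phi(x))\geq\varphi_{\bar x_i}'(\Phi(x)-\ell)\dist(0,\partial\Phi(x))\geq1,
\]
which gives \eqref{eq:uniformKL}. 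If $\partial\Phi(x)=\varnothing$, the distance is $+\infty$ and the inequality holds trivially.

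\emph{Main obstacle.} The only real care is the radius bookkeeping in Step 2. The point $x$ is near $\Omega$ but need not be near a cover center, so the halved radii are what place $x$ inside a full KL neighborhood. The summation in Step 3 is what removes any need to choose which local function applies.
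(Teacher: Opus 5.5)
Your proof is correct and takes the same route as the paper: in the proof of Theorem~\ref{thm:controlled-global}, following Bolte--Sabach--Teboulle, the paper covers $\Omega$ by finitely many KL neighborhoods, takes the minimum of their value intervals, and sums the desingularizing functions. Your halved-radius argument makes explicit a step the paper leaves implicit, namely that an $\varepsilon$-neighborhood of $\Omega$ lies inside the union of the full KL neighborhoods.
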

This uniformization is used in the convergence analysis of descent methods \cite{attouch2013convergence,bolte2014proximal}.

\begin{definition}\label{def:finitelength}
A sequence $\{x^k\}$ has finite length if
\[
\sum_{k=0}^{\infty}\norm{x^{k+1}-x^k}<+\infty.
\]
\end{definition}
Finite length implies that the sequence is Cauchy, since for every $p\geq1$,
\[
\norm{x^{k+p}-x^k}\leq\sum_{j=k}^{\infty}\norm{x^{j+1}-x^j}\longrightarrow0.
\]
In finite-dimensional Euclidean space it therefore converges to a finite limit.

\section{Problem formulation and the CBPL method}
\label{sec:method}
In this section, we present the original problem, the controlled block update and the conditions used to construct its control potential. The explicit model constructions and block formulas are presented after the general convergence analysis.

\subsection{Original problem and controlled block update}
We consider Eq.~(\ref{e11}) under the following assumptions.
\begin{assumption}\label{ass:original}
Each $F_i:\R^{d_i}\to(-\infty,+\infty]$ is proper and lower semicontinuous. The function $H:\prod_{i=1}^N\R^{d_i}\to\R$ is continuously differentiable, and its gradient is Lipschitz continuous on bounded sets. The original objective $J$ is bounded below.
\end{assumption}
The functions $F_i$ may be nonconvex and nonsmooth and include the original feasibility constraints. These assumptions retain the original block-composite problem without assuming bounded iterates or coercivity.

Let $C$ be a control potential, finite and smooth on an open set $U$, and set
\begin{equation}
f=H+C,\qquad \Phi=J+C=f+\sum_{i=1}^NF_i.
\label{eq:merit}
\end{equation}
For a cyclic sweep, write
\[
y^{k,i}=(x_1^{k+1},\ldots,x_i^{k+1},x_{i+1}^k,\ldots,x_N^k),
\qquad y^{k,0}=x^k,\quad y^{k,N}=x^{k+1}.
\]
The Controlled Block Proximal Linearized (CBPL) update is
\begin{equation}
\begin{aligned}
x_i^{k+1}\in\argmin_{u_i}\Big\{F_i(u_i)&+
\ip{\nabla_i f(y^{k,i-1})}{u_i-x_i^k}
+\frac{1}{2\sigma_i^k}\norm{u_i-x_i^k}^2\Big\}.
\end{aligned}
\label{e12}
\end{equation}
Here $\sigma_i^k=1/c_i^k$. The control enters the smooth gradient, while each original function $F_i$ is retained in the proximal step. We select the control and the block parameter rule to satisfy the following conditions.

\begin{enumerate}
\item[(C1)] The extended function $C$ is lower semicontinuous and bounded below, is finite and continuously differentiable on $U$, and equals $+\infty$ outside $U$. The gradient of $f$ is locally Lipschitz continuous on $U$, and an initial point $x^0\in\dom J\cap U$ is available.
\item[(C2)] Every nonempty finite sublevel of $\Phi$ is a compact subset of $U$, and $\Phi$ is a KL function.
\item[(C3)] At a current mixed state $Y\in\dom J\cap U$, the parameter rule selects a finite coefficient $c_i^k\geq2\delta$, for a fixed $\delta>0$, and a global block minimizer $Y_i^+$ in Eq.~(\ref{e12}). The selected updated state $Y^+$ remains in $U$ and satisfies
\begin{equation}
f(Y^+)-f(Y)\leq\ip{\nabla_i f(Y)}{Y_i^+-Y_i}
+\frac{c_i^k-\delta}{2}\norm{Y_i^+-Y_i}^2.
\label{eq:descenttest}
\end{equation}
The parameter rules are locally bounded uniformly in $k$: every point of $\dom J\cap U$ has a relative neighborhood on which all selected coefficients have a common finite upper bound.
\item[(C4)] The original problem geometry supplies effective comparison directions $\Qeff(x)\subseteq\Heff(x)$, where $\Heff(x)$ is a linear space. With $W(x)=\Heff(x)^\perp$, the control satisfies $\nabla C(x)\in W(x)$. When the original pathology structure is zero, the comparison set is the full ambient space.
\end{enumerate}

Conditions (C1)--(C3) specify the control and solver used to obtain descent, boundedness and convergence. Condition (C4) transfers their certificates to the original objective. The model constructions below identify the effective directions from the exact original feasible reconstruction fibers, prove that the scale projection retains reconstruction changes, and verify these conditions for the explicit graph controls. Thus the control geometry and the block parameter rule are determined by the original model.

\subsection{The algorithm and parameter selection}
Algorithm~\ref{alg:cbpl} applies one cyclic sweep of Eq.~(\ref{e12}) at each iteration. Any finite initialization required to reach $U$ is part of the control construction; the graph models have an explicit initialization from every original feasible input.

\begin{algorithm}[htbp]
\caption{Controlled Block Proximal Linearized method (CBPL)}\label{alg:cbpl}
\begin{algorithmic}[1]
\Require Original block functions $F_i$, coupling $H$, control $C$ and parameter rules satisfying (C1)--(C4), and $x^0\in\dom J\cap U$.
\For{$k=0,1,2,\ldots$}
\State Set $y^{k,0}=x^k$.
\For{$i=1,\ldots,N$}
\State Compute $c_i^k$ from the selected parameter rule at $y^{k,i-1}$ and set $\sigma_i^k=1/c_i^k$.
\State Compute $x_i^{k+1}\in\prox_{\sigma_i^k F_i}\big(x_i^k-\sigma_i^k\nabla_i(H+C)(y^{k,i-1})\big)$.
\State Form $y^{k,i}$ by replacing block $i$ of $y^{k,i-1}$ with $x_i^{k+1}$.
\EndFor
\State Set $x^{k+1}=y^{k,N}$.
\EndFor
\end{algorithmic}
\end{algorithm}

\begin{remark}\label{rem:parameters}
For the graph factorization models, we take $c_i^k=C_{\mathrm{safe}}(y^{k,i-1},i)$ from the finite formulas in the model constructions below. Each block then requires one original-feasible projection. These formulas verify Eq.~(\ref{eq:descenttest}) directly and are uniformly bounded on each controlled sublevel. A larger choice $c_i^k=t_i^kC_{\mathrm{safe}}(y^{k,i-1},i)$, with $1\leq t_i^k\leq\kappa<+\infty$ for a fixed $\kappa$, has the same guarantees. The implementation in this paper uses $t_i^k=1$.
\end{remark}

The number $N$ in Algorithm~\ref{alg:cbpl} counts independently updated blocks. For an independent Tucker core with $N$ model modes, the application uses $B=N+1$ blocks in the order $G,A_1,\ldots,A_N$ and applies the same analysis with $B$ in place of the general block count.

\section{Convergence analysis}
\label{sec:convergence}
In this section, we establish sufficient decrease, automatic boundedness, the true subgradient bound, finite length of the original variables, and the original objective conclusions. We first analyze Algorithm~\ref{alg:cbpl} under Assumption~\ref{ass:original} and conditions (C1)--(C4) stated after Eq.~(\ref{e12}), and give the original proximal result for the full composite class. We then prove the structural degeneracy of graph factorization and verify the control conditions through explicit model constructions.

\subsection{Proof framework}
The convergence proof follows the properties below. The descent quantity is
$\Phi$, and the original objective values are recovered at the limit.
\begin{property}\label{prop:framework}
The controlled block iterates have the following properties.
\begin{enumerate}
\item[(i)] There is $a>0$ such that
 $\Phi(x^k)-\Phi(x^{k+1})\geq a\|x^{k+1}-x^k\|^2$.
\item[(ii)] The control sublevel contains all mixed states and gives their
 boundedness and uniform finite bounds for the selected coefficients.
\item[(iii)] There are true subgradients
 $p_{x^{k+1}}\in\partial\Phi(x^{k+1})$ and $b<\infty$ such that
 $\|p_{x^{k+1}}\|\leq b\|x^{k+1}-x^k\|$.
 Every cluster point has the limiting value of $\Phi$ and is $\Phi$-critical.
\item[(iv)] The KL property gives finite length and convergence of the whole
 sequence of original variables.
\item[(v)] The original values converge and true original subgradients give
 effective stationarity, together with full criticality in the ordinary
 factorization branch.
\end{enumerate}
\end{property}

\subsection{Sufficient decrease of the controlled energy}
We first prove Property~\ref{prop:framework}(i).
\begin{theorem}[Controlled sufficient decrease]\label{thm:descent}
Every mixed state is in $K_M=\{x:\Phi(x)\leq M\}$, where $M=\Phi(x^0)$.
With $d_k=\|x^{k+1}-x^k\|$,
\begin{equation}
 \Phi(x^k)-\Phi(x^{k+1})\geq\frac\delta2d_k^2,
 \qquad \sum_{k=0}^{\infty}d_k^2<\infty,
 \qquad \Phi(x^k)\longrightarrow\ell\in\mathbb R.
 \label{eq:descent}
\end{equation}
\end{theorem}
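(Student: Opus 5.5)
The plan is to prove a one-block decrease inequality for $\Phi$ along each mixed state and then sum it over a sweep. Fix $k$ and $i$, and set $Y=y^{k,i-1}$ and $Y^+=y^{k,i}$. The states differ only in block $i$, so $\norm{Y^+-Y}=\norm{x_i^{k+1}-x_i^k}$.

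\emph{Step 1: one-block decrease.} Argue by induction on the position in the sweep, with hypothesis $Y\in\dom J\cap U$ and $\Phi(Y)\leq M$. The base case $Y=x^0$ holds by (C1). By (C3), $Y_i^+=x_i^{k+1}$ is a global minimizer of Eq.~(\ref{e12}). Comparing its value with the candidate $u_i=Y_i$ gives
\[
F_i(Y_i^+)+\ip{\nabla_i f(Y)}{Y_i^+-Y_i}+\frac{c_i^k}{2}\norm{Y_i^+-Y_i}^2\leq F_i(Y_i).
\]
This is finite because $Y\in\dom J$. Adding Eq.~(\ref{eq:descenttest}) cancels the linear term and leaves
\[
f(Y^+)+F_i(Y_i^+)\leq f(Y)+F_i(Y_i)-\frac{\delta}{2}\norm{Y_i^+-Y_i}^2.
\]
The blocks $F_j$ with $j\neq i$ are unchanged, so $\Phi(Y^+)\leq\Phi(Y)-\frac\delta2\norm{Y^+-Y}^2$. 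Since $Y^+\in U$ by (C3), $C(Y^+)$ is finite. The right-hand side is finite, so $F_i(Y_i^+)<\infty$ and $Y^+\in\dom J\cap U$. This closes the induction. It also gives $\Phi(y^{k,i})\leq\Phi(x^0)=M$ for every mixed state, which is the first claim, $y^{k,i}\in K_M$.

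\emph{Step 2: summing a sweep.} Summing the block inequalities over $i=1,\dots,N$ gives
\[
\Phi(x^k)-\Phi(x^{k+1})\geq\frac\delta2\sum_{i}\norm{x_i^{k+1}-x_i^k}^2=\frac\delta2 d_k^2,
\]
by the block-norm convention. Telescoping yields $\frac\delta2\sum_{k=0}^{K}d_k^2\leq\Phi(x^0)-\Phi(x^{K+1})$.

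\emph{Step 3: lower bound and convergence of values.} The sequence $\Phi(x^k)$ is nonincreasing, and it is bounded below because $J$ is bounded below (Assumption~\ref{ass:original}) and $C$ is bounded below (C1). Hence $\Phi(x^k)$ converges to some $\ell\in\R$, and $\sum_k d_k^2\leq 2(M-\inf\Phi)/\delta<\infty$.

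The only delicate point is the induction in Step 1. To apply (C3) at the next block, its hypothesis $Y\in\dom J\cap U$ must hold for every mixed state. This is why finiteness of $F_i(Y_i^+)$ is extracted from the prox comparison rather than assumed. The compactness in (C2) is not needed here; it enters only later, for boundedness.
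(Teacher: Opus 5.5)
Your proposal is correct and follows essentially the same argument as the paper. You compare the block minimizer with the current block, add the descent test in (C3) to get the one-block decrease, use induction to place every mixed state in $K_M$, sum over the sweep, and conclude from the lower bounds on $J$ and $C$. You also state explicitly why the induction hypothesis $Y\in\dom J\cap U$ propagates, which the paper compresses into one sentence.
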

\begin{proof}
The finite continuous coupling and proper lower semicontinuous block functions
make $J$ proper and lower semicontinuous. Condition (C1), the lower bounds for
$J$ and $C$, and the finite initial value therefore make $\Phi$ proper, lower
semicontinuous and bounded below.
Put $\Delta_i^k=x_i^{k+1}-x_i^k$. Global comparison of the block minimizer
with the current block gives
\[
 F_i(x_i^{k+1})-F_i(x_i^k)
 +\langle\nabla_i f(y^{k,i-1}),\Delta_i^k\rangle
 +\frac{c_i^k}{2}\|\Delta_i^k\|^2\leq0.
\]
Combining this inequality with Eq.~(\ref{eq:descenttest}) yields
\[
 \Phi(y^{k,i-1})-\Phi(y^{k,i})
       \geq\frac\delta2\|\Delta_i^k\|^2.
\]
The selected update is finite valued and stays in $U$. Induction puts every
mixed state in $K_M$. Since each block is changed once,
$\sum_i\|\Delta_i^k\|^2=d_k^2$, giving the first assertion in
Eq.~(\ref{eq:descent}). The lower bounds for $J$ and $C$ give a finite lower
bound for $\Phi$. Thus its decreasing values have a finite limit, and summing
the first inequality gives square summability and $d_k\to0$.
\end{proof}

\subsection{Boundedness derived from the control}
We prove Property~\ref{prop:framework}(ii).
\begin{theorem}[Automatic boundedness and uniform parameters]
\label{thm:boundedness}
All mixed states lie in the compact set $K_M\subset U$. There are finite
constants $\bar c$ and $L_K$ such that
\begin{equation}
 2\delta\leq c_i^k\leq\bar c,
 \qquad
 \|\nabla f(x)-\nabla f(y)\|\leq L_K\|x-y\|
 \quad(x,y\in K_M).
 \label{eq:uniform}
\end{equation}
\end{theorem}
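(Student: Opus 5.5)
By Theorem~\ref{thm:descent}, every mixed state $y^{k,i}$ lies in $K_M=\{x:\Phi(x)\leq M\}$ with $M=\Phi(x^0)$. The plan has three steps: show $K_M$ is compact and contained in $U$, bound the coefficients $c_i^k$ uniformly, and get a global Lipschitz constant for $\nabla f$ on $K_M$.

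\textbf{Compactness.} The set $K_M$ is nonempty because it contains $x^0$. It is a finite sublevel of $\Phi$, and $\Phi$ is finite at $x^0\in\dom J\cap U$, so condition (C2) gives directly that $K_M$ is compact and $K_M\subset U$. We also note that $K_M\subset\dom J\cap U$. On $K_M$ we have $\Phi\leq M<+\infty$. Condition (C1) makes $C=+\infty$ outside $U$, so $K_M\subset U$. Since $C$ is finite on $U$, $J=\Phi-C$ is finite on $K_M$. This inclusion matters for the local bounds in (C3).

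\textbf{Coefficient bound.} The lower bound $c_i^k\geq2\delta$ is part of (C3). For the upper bound we use compactness. For each point $z\in K_M\subset\dom J\cap U$, (C3) supplies a relative neighborhood $V_z$ of $z$ in $\dom J\cap U$ and a finite constant $\bar c_z$. This constant bounds every coefficient selected at any state in $V_z$, uniformly in $k$ and $i$. Each $V_z$ has the form $O_z\cap(\dom J\cap U)$ with $O_z$ open. The sets $O_z$ cover $K_M$, so there is a finite subcover $O_{z_1},\ldots,O_{z_m}$. Each coefficient $c_i^k$ is computed at $y^{k,i-1}\in K_M$, and that state lies in some $V_{z_j}$. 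Hence $c_i^k\leq\bar c:=\max_j\bar c_{z_j}<\infty$.

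\textbf{Lipschitz constant.} This is the main obstacle. Local Lipschitz continuity does not immediately give a global constant on a compact set, because the segment between two points of $K_M$ may leave $U$. I would use a standard two-scale argument.

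First, by (C1), every $z\in K_M$ has an open ball $B(z,2r_z)\subset U$ on which $\nabla f$ is $L_z$-Lipschitz. By compactness, finitely many balls $B(z_j,r_{z_j})$ cover $K_M$. Let $\rho$ be a Lebesgue number of this cover, for example $\rho=\min_j r_{z_j}$.

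Next, take $x,y\in K_M$ with $\|x-y\|<\rho$. Then $x\in B(z_j,r_{z_j})$ for some $j$, so both $x$ and $y$ lie in $B(z_j,2r_{z_j})$. There the gradient is $L_{z_j}$-Lipschitz, which gives $\|\nabla f(x)-\nabla f(y)\|\leq L_{z_j}\|x-y\|$.

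Finally, take $x,y\in K_M$ with $\|x-y\|\geq\rho$. The gradient $\nabla f$ is continuous on the compact set $K_M$, so it is bounded there by some $G$. This gives $\|\nabla f(x)-\nabla f(y)\|\leq 2G\leq(2G/\rho)\|x-y\|$.

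Taking $L_K=\max\{\max_j L_{z_j},\,2G/\rho\}$ proves Eq.~(\ref{eq:uniform}).
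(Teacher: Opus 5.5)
Your proposal is correct and follows essentially the same route as the paper: compactness and $K_M\subset U$ come from (C2), $\bar c$ comes from a finite subcover of the relative neighborhoods in (C3) (maximizing over the finitely many blocks), and $L_K$ comes from a two-scale argument, using a finite ball cover inside $U$ for pairs closer than $\rho$ and the bound $2G/\rho$ for pairs farther apart. One cosmetic point: $\rho=\min_j r_{z_j}$ is not literally a Lebesgue number of the cover $\{B(z_j,r_{z_j})\}$, but your argument only needs that any pair within distance $\rho$ lies in a common doubled ball $B(z_j,2r_{z_j})$, and you verify this directly.
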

\begin{proof}
The preceding theorem places all mixed states in the initial controlled
sublevel. Its compactness follows from the control condition, which the
application proves from the original model. Finite coverings of $K_M$ by
relative neighborhoods on which the selected coefficients are uniformly bounded give a uniform upper
bound; there are only finitely many blocks.

For the gradient estimate, local Lipschitz continuity on $U$ and compactness
give $\eta>0$ and a finite common Lipschitz constant for all pairs of points
of $K_M$ with distance less than $\eta$. One can obtain these constants by a
finite ball cover with closures inside $U$ and its Lebesgue number. For pairs
with distance at least $\eta$, boundedness of the continuous gradient on
$K_M$ gives
\[
 \|\nabla f(x)-\nabla f(y)\|
 \leq\frac{2\max_{z\in K_M}\|\nabla f(z)\|}{\eta}\|x-y\|.
\]
Taking the larger constant proves Eq.~(\ref{eq:uniform}).
\end{proof}

\subsection{True subgradients and cluster-point compatibility}
We prove Property~\ref{prop:framework}(iii), including the function
values needed for limiting-subgradient closedness.
\begin{theorem}[Subgradient bound and cluster points]
\label{thm:subgradient}
For every $k$, define the block components
\begin{equation}
 p_{x_i^{k+1}}
 =\nabla_i f(x^{k+1})-\nabla_i f(y^{k,i-1})
       -c_i^k(x_i^{k+1}-x_i^k).
 \label{eq:certificate}
\end{equation}
Then $p_{x^{k+1}}=(p_{x_i^{k+1}})_{i=1}^N\in\partial\Phi(x^{k+1})$ and
\begin{equation}
 \|p_{x^{k+1}}\|\leq(\sqrt N L_K+\bar c)d_k.
 \label{eq:subgradientbound}
\end{equation}
The cluster set $\Omega$ is nonempty and compact,
$\operatorname{dist}(x^k,\Omega)\to0$, and every $\bar x\in\Omega$ satisfies
\begin{equation}
 \Phi(\bar x)=\ell,\qquad 0\in\partial\Phi(\bar x).
 \label{eq:cluster}
\end{equation}
\end{theorem}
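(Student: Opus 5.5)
The plan has three parts: the subgradient inclusion, the bound (\ref{eq:subgradientbound}), and the cluster-point statement, which follows from compactness and a value-compatibility argument.

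\emph{Subgradient inclusion.} Write the optimality condition of the block problem (\ref{e12}) at its global minimizer $x_i^{k+1}$. By Proposition~\ref{prop:subdifferential-calculus}, a global minimizer has $0$ in the Fr\'echet subdifferential. The smooth sum rule (\ref{eq:smoothsum}) then gives
\[
 -\nabla_i f(y^{k,i-1})-c_i^k(x_i^{k+1}-x_i^k)\in\widehat\partial F_i(x_i^{k+1})\subseteq\partial F_i(x_i^{k+1}).
\]
Since $\Phi=f+\sum_iF_i$ with $f$ continuously differentiable on the open set $U\ni x^{k+1}$, the separable structure gives $\partial\Phi(x^{k+1})\supseteq\nabla f(x^{k+1})+\prod_i\partial F_i(x_i^{k+1})$. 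This holds first for Fr\'echet subgradients of the separable sum, and then passes to the limiting one through the inclusion $\widehat\partial\subseteq\partial$ and (\ref{eq:smoothsum}). Adding $\nabla_i f(x^{k+1})$ blockwise yields exactly (\ref{eq:certificate}) and $p_{x^{k+1}}\in\partial\Phi(x^{k+1})$.

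\emph{Bound.} Both $x^{k+1}$ and $y^{k,i-1}$ lie in $K_M$ by Theorem~\ref{thm:boundedness}. Hence $\|\nabla_i f(x^{k+1})-\nabla_i f(y^{k,i-1})\|\le L_K\|x^{k+1}-y^{k,i-1}\|\le L_Kd_k$, and $c_i^k\|\Delta_i^k\|\le\bar c\|\Delta_i^k\|$. Summing squares over the $N$ blocks and using the triangle inequality in $\R^{\sum d_i}$ gives $\|p\|\le\sqrt N L_Kd_k+\bar c\,d_k$, which is (\ref{eq:subgradientbound}).

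\emph{Cluster points.} The sequence lies in the compact set $K_M$, so $\Omega$ is nonempty. As a set of cluster points of a bounded sequence it is closed and bounded, hence compact. If $\dist(x^k,\Omega)\not\to0$, a subsequence staying a fixed distance from $\Omega$ would have a cluster point in $\Omega$, a contradiction.

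Now fix $\bar x\in\Omega$ with $x^{k_j}\to\bar x$. Closedness of $K_M\subset U$ gives $\bar x\in U$ and $\Phi(\bar x)\le\ell$ by lower semicontinuity. The main step is the reverse inequality, i.e.\ value compatibility for the nonsmooth $F_i$. I would use the block minimization property at iteration $k_j-1$: compare $x_i^{k_j}$ with the competitor $\bar x_i$ in (\ref{e12}), which gives
\[
 F_i(x_i^{k_j})\le F_i(\bar x_i)+\ip{\nabla_i f(y^{k_j-1,i-1})}{\bar x_i-x_i^{k_j}}+\tfrac{c_i^{k_j-1}}2\|\bar x_i-x_i^{k_j-1}\|^2-\tfrac{c_i^{k_j-1}}2\|x_i^{k_j}-x_i^{k_j-1}\|^2.
\]
Since $d_k\to0$ by Theorem~\ref{thm:descent}, also $x^{k_j-1}\to\bar x$. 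The gradients are bounded on $K_M$ and $c_i^k\le\bar c$, so $\limsup F_i(x_i^{k_j})\le F_i(\bar x_i)$. Note that $F_i(\bar x_i)$ is finite because $\Phi(\bar x)\le\ell<\infty$ and $f$ is finite at $\bar x\in U$. Combined with lower semicontinuity, this gives $F_i(x_i^{k_j})\to F_i(\bar x_i)$. Continuity of $f$ on $U$ then yields $\Phi(x^{k_j})\to\Phi(\bar x)$, so $\Phi(\bar x)=\ell$.

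Finally, $p_{x^{k_j}}\in\partial\Phi(x^{k_j})$ with $\|p_{x^{k_j}}\|\le(\sqrt NL_K+\bar c)d_{k_j-1}\to0$, and the function values converge. The closedness statement in Proposition~\ref{prop:subdifferential-calculus} therefore gives $0\in\partial\Phi(\bar x)$.
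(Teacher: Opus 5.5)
Your proposal is correct and follows essentially the same route as the paper. Both arguments assemble the blockwise Fr\'echet optimality conditions into a product Fr\'echet subgradient of $\Phi$, bound it by $(\sqrt N L_K+\bar c)d_k$ using $\|x^{k+1}-y^{k,i-1}\|\le d_k$, and obtain value compatibility by comparing the block minimizer with the competitor $\bar x_i$; you only index the subsequence as $x^{k_j}$ instead of $x^{k_j+1}$.

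One small tightening: your vector is already a Fr\'echet subgradient, so you only need $\nabla f(x^{k+1})+\prod_i\widehat\partial F_i(x_i^{k+1})\subseteq\widehat\partial\Phi(x^{k+1})\subseteq\partial\Phi(x^{k+1})$. The stronger inclusion with the limiting $\partial F_i$ that you assert does not follow from $\widehat\partial\subseteq\partial$ alone.
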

\begin{proof}
Global optimality of the block update gives
\[
 -\nabla_i f(y^{k,i-1})-c_i^k\Delta_i^k
       \in\widehat\partial F_i(x_i^{k+1}).
\]
This follows also by expanding the global minimizing inequality at
$x_i^{k+1}$; its quadratic remainder is $o(\|u_i-x_i^{k+1}\|)$.
For the finite separable sum of the $F_i$, these block Fr\'echet
subgradients form a product Fr\'echet subgradient. Adding the smooth gradient
of $f$ proves that Eq.~(\ref{eq:certificate}) belongs to
$\widehat\partial\Phi(x^{k+1})\subseteq\partial\Phi(x^{k+1})$.
Every mixed state and the endpoint are in $K_M$, and
$\|x^{k+1}-y^{k,i-1}\|\leq d_k$. Consequently,
\[
 \left(\sum_i\|\nabla_i f(x^{k+1})
              -\nabla_i f(y^{k,i-1})\|^2\right)^{1/2}
       \leq\sqrt N L_Kd_k.
\]
Also,
\[
 \left(\sum_i(c_i^k)^2\|\Delta_i^k\|^2\right)^{1/2}
       \leq\bar c d_k,
\]
proving Eq.~(\ref{eq:subgradientbound}).

Compactness of $K_M$ gives a nonempty compact cluster set. A subsequence at
positive distance from this set would have a further convergent subsequence
in $K_M$, a contradiction; hence the distance tends to zero.

Fix a subsequence $x^{k_j+1}\to\bar x$. Since $d_k\to0$, its preceding
iterates and every corresponding mixed state also tend to $\bar x$.
Lower semicontinuity of $\Phi$ gives $\Phi(\bar x)\leq\ell<\infty$;
therefore each $F_i(\bar x_i)$ is finite. Comparing the global block minimum
with $u_i=\bar x_i$ yields
\begin{align*}
 F_i(x_i^{k_j+1})\leq F_i(\bar x_i)
 &+\left\langle\nabla_i f(y^{k_j,i-1}),\bar x_i-x_i^{k_j+1}\right\rangle\\
 &+\frac{c_i^{k_j}}2\left(
       \|\bar x_i-x_i^{k_j}\|^2-\|x_i^{k_j+1}-x_i^{k_j}\|^2\right).
\end{align*}
The gradient and the coefficients are uniformly bounded, so the terms on
the right after $F_i(\bar x_i)$ tend to zero. Thus
$\limsup_jF_i(x_i^{k_j+1})\leq F_i(\bar x_i)$.
Lower semicontinuity gives the reverse liminf inequality. We have proved
\[
 F_i(x_i^{k_j+1})\longrightarrow F_i(\bar x_i)
 \quad\text{for every }i.
\]
Continuity of $f$ now gives
$\Phi(x^{k_j+1})\to\Phi(\bar x)=\ell$.
Together with $p_{x^{k_j+1}}\to0$, the value-compatible definition of the
limiting subdifferential yields $0\in\partial\Phi(\bar x)$.
\end{proof}

\subsection{Finite length and global convergence}
We prove Property~\ref{prop:framework}(iv).
\begin{theorem}[Finite length of the original variables]
\label{thm:controlled-global}
The whole sequence converges to a finite $x^*\in K_M$, with
$0\in\partial\Phi(x^*)$, and
\begin{equation}
 \sum_{k=0}^{\infty}\|x^{k+1}-x^k\|<\infty.
 \label{eq:controlled-length}
\end{equation}
The path containing every formal mixed state also has finite total length.
\end{theorem}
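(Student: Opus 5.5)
The argument is the standard Attouch--Bolte KL argument, driven by Theorems~\ref{thm:descent}, \ref{thm:boundedness} and \ref{thm:subgradient}.

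\textbf{Trivial case.} First dispose of the case $\Phi(x^{k_0})=\ell$ for some $k_0$. Monotonicity and the bound $\Phi(x^k)-\Phi(x^{k+1})\geq\frac\delta2d_k^2$ give $d_k=0$ for all $k\geq k_0$. The sequence is then eventually constant, so it has finite length. Its limit lies in $\Omega$, so $0\in\partial\Phi(x^*)$ follows from Eq.~(\ref{eq:cluster}).

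\textbf{Setting up the uniform KL inequality.} Otherwise $\Phi(x^k)>\ell$ for all $k$. By Theorem~\ref{thm:subgradient}, the cluster set $\Omega$ is nonempty and compact, $\Phi\equiv\ell$ on $\Omega$, and $\Omega\subseteq\dom\partial\Phi$ because $0\in\partial\Phi(\bar x)$. Condition (C2) makes $\Phi$ a KL function, so Proposition~\ref{prop:uniformKL} supplies $\varepsilon,\eta,\varphi$. Since $\dist(x^k,\Omega)\to0$ and $\Phi(x^k)\downarrow\ell$, there is $k_1$ such that for all $k\geq k_1$ both $\dist(x^k,\Omega)<\varepsilon$ and $\ell<\Phi(x^k)<\ell+\eta$ hold.

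\textbf{Core estimate.} For $k\geq k_1+1$, apply the KL inequality at $x^k$. Use $p_{x^k}\in\partial\Phi(x^k)$ with $\|p_{x^k}\|\leq b\,d_{k-1}$, where $b=\sqrt NL_K+\bar c$. This gives $\varphi'(\Phi(x^k)-\ell)\geq 1/(b\,d_{k-1})$; if $d_{k-1}=0$ the iteration is stationary and we are in the trivial case. Concavity of $\varphi$ then gives
\[
\Delta_k:=\varphi(\Phi(x^k)-\ell)-\varphi(\Phi(x^{k+1})-\ell)\geq\varphi'(\Phi(x^k)-\ell)\bigl(\Phi(x^k)-\Phi(x^{k+1})\bigr)\geq\frac{\delta d_k^2}{2b\,d_{k-1}}.
\]
Hence $d_k^2\leq\frac{2b}{\delta}\Delta_kd_{k-1}$. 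By AM--GM, $2d_k\leq d_{k-1}+\frac{2b}{\delta}\Delta_k$. Summing from $k_1+1$ to $K$ telescopes the $\Delta_k$ terms, which are bounded by $\varphi(\Phi(x^{k_1+1})-\ell)$. This yields $\sum_k d_k\leq d_{k_1}+\frac{2b}{\delta}\varphi(\Phi(x^{k_1+1})-\ell)<\infty$, which is Eq.~(\ref{eq:controlled-length}).

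\textbf{Conclusions.} Finite length implies the sequence is Cauchy, so it converges to some $x^*$. Since $x^*$ is a cluster point and $K_M$ is compact (closed), we get $x^*\in\Omega\subseteq K_M$, and Eq.~(\ref{eq:cluster}) gives $0\in\partial\Phi(x^*)$. For the mixed-state path, consecutive states $y^{k,i-1},y^{k,i}$ differ only in block $i$ by $\Delta_i^k$, so $\sum_i\|y^{k,i}-y^{k,i-1}\|=\sum_i\|\Delta_i^k\|\leq\sqrt N d_k$. The polygonal path therefore has length at most $\sqrt N\sum_kd_k<\infty$.

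\textbf{Main obstacle.} The delicate point is the KL step. It needs the subgradient at $x^k$ bounded by the \emph{previous} step $d_{k-1}$, together with the entrance into the uniform KL neighbourhood. Theorem~\ref{thm:subgradient} provides exactly the index shift, and the uniform KL property avoids having to know a limit in advance, so the remaining work is careful bookkeeping of indices.
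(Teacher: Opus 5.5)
Your proposal is correct and follows the paper's proof essentially step for step. The paper builds the uniform KL function by hand from a finite cover of $\Omega$, which is just the content of Proposition~\ref{prop:uniformKL}; it then uses $\|p_{x^k}\|\leq b\,d_{k-1}$, concavity, $2\sqrt{uv}\leq u+v$, telescoping, the Cauchy argument in the closed set $K_M$, and the $\sqrt N\,d_k$ bound for the mixed path exactly as you do.

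One small correction: in the nontrivial case, $d_{k-1}>0$ is forced directly by the KL inequality, since it gives $\dist(0,\partial\Phi(x^k))>0$. You therefore do not need to argue that a zero step makes the iteration stationary, and that argument would not hold anyway if the block-minimizer selection is not deterministic.
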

\begin{proof}
On the nonempty compact set $\Omega$, the function $\Phi$ has the constant
value $\ell$ and the KL property. Choose a finite cover of $\Omega$ by local
KL neighborhoods, take the minimum of their positive value intervals, and
sum their concave desingularizing functions on that interval. The resulting
concave function $\varphi$ has a derivative at least as large as each local
derivative. Since $\operatorname{dist}(x^k,\Omega)\to0$ and
$\Phi(x^k)\to\ell$, it supplies one KL inequality on a tail.

Put $a=\delta/2$, $b=\sqrt N L_K+\bar c$, and
$r_k=\Phi(x^k)-\ell$. If $r_k=0$ at some index, sufficient decrease forces
all subsequent displacements to vanish. Otherwise, for all large $k\geq1$,
\[
 1\leq\varphi'(r_k)\operatorname{dist}(0,\partial\Phi(x^k))
   \leq b\varphi'(r_k)d_{k-1}.
\]
In particular $d_{k-1}>0$. Concavity and sufficient decrease give
\[
 \varphi(r_k)-\varphi(r_{k+1})
 \geq\varphi'(r_k)(r_k-r_{k+1})
 \geq\frac{a d_k^2}{b d_{k-1}}.
\]
Using $2\sqrt{uv}\leq u+v$ gives
\[
 2d_k\leq d_{k-1}+\frac ba
               \big(\varphi(r_k)-\varphi(r_{k+1})\big).
\]
For a sufficiently large $k_0$, summing through $m\geq k_0$ yields
\[
 \sum_{k=k_0}^{m}d_k+d_m
 \leq d_{k_0-1}+\frac ba\varphi(r_{k_0}).
\]
This proves Eq.~(\ref{eq:controlled-length}). The original Euclidean sequence is
Cauchy and converges in the closed set $K_M$. The preceding theorem gives
$0\in\partial\Phi(x^*)$. Within each sweep,
$\sum_i\|\Delta_i^k\|\leq\sqrt N d_k$; summing proves the mixed-path claim.
\end{proof}

\subsection{Original values and effective stationarity}
We prove Property~\ref{prop:framework}(v).
Use the original effective residual
\[
 r_{\mathrm{eff}}(x)=\inf_{p\in\partial J(x)}
       \sup_{d\in\mathcal Q_{\mathrm{eff}}(x),\ \|d\|=1}
                   |\langle p,d\rangle|,
\]
with empty supremum zero and the full ambient comparison in the zero-pathology
branch. The application constructs these directions from its actual feasible
reconstruction fibers.
We call $x$ effectively stationary for the original $J$ if there is a single $p\in\partial J(x)$ such that $\langle p,d\rangle=0$ for every $d\in\Qeff(x)$.
\begin{theorem}[Original objective and effective criticality]
\label{thm:original}
For $k\geq0$ the vectors
\begin{equation}
 p^J_{x^{k+1}}=p_{x^{k+1}}-\nabla C(x^{k+1})\in\partial J(x^{k+1})
 \label{eq:originalcertificate}
\end{equation}
are true original limiting subgradients and satisfy
\begin{equation}
 r_{\mathrm{eff}}(x^{k+1})
 \leq\operatorname{dist}(\partial J(x^{k+1}),W(x^{k+1}))
 \leq\|p_{x^{k+1}}\|\longrightarrow0.
 \label{eq:controlled-transfer}
\end{equation}
At the finite whole-sequence limit,
\begin{equation}
 J(x^k)\longrightarrow J(x^*),\qquad
 -\nabla C(x^*)\in\partial J(x^*)\cap W(x^*).
 \label{eq:controlled-original-limit}
\end{equation}
This single original subgradient annihilates all effective comparison
directions evaluated at $x^*$.
\end{theorem}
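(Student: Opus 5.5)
The plan is to read everything off the controlled certificate of Theorem~\ref{thm:subgradient}, the smooth sum rule of Proposition~\ref{prop:subdifferential-calculus}, and condition (C4).

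\emph{Step 1: the original certificate.} Since $C$ is finite and $C^1$ on the open set $U$, and every iterate lies in $K_M\subset U$, Eq.~(\ref{eq:smoothsum}) gives $\partial\Phi(x)=\partial J(x)+\nabla C(x)$ near $x^{k+1}$. Applying this to $p_{x^{k+1}}\in\partial\Phi(x^{k+1})$ gives Eq.~(\ref{eq:originalcertificate}).

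\emph{Step 2: the residual chain in Eq.~(\ref{eq:controlled-transfer}).} Condition (C4) gives $-\nabla C(x^{k+1})\in W(x^{k+1})$. Hence
\[
\dist(\partial J(x^{k+1}),W(x^{k+1}))\le\|p^J_{x^{k+1}}+\nabla C(x^{k+1})\|=\|p_{x^{k+1}}\|.
\]
For the first inequality in the chain, take any $p\in\partial J(x^{k+1})$ and $w\in W(x^{k+1})$. For a unit vector $d\in\Qeff\subseteq\Heff=W^\perp$ we have $|\langle p,d\rangle|=|\langle p-w,d\rangle|\le\|p-w\|$. Taking the supremum over $d$ and then the infimum over $p,w$ gives $\reff\le\dist$. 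This also covers the empty-supremum case. In the zero-pathology branch, $\Qeff$ is the whole space, and I read (C4) as forcing $\Heff$ to be the whole space, so $W=\{0\}$. The chain then reduces to $\dist(0,\partial J)\le\|p\|$. The convergence $\|p_{x^{k+1}}\|\to0$ follows from Eq.~(\ref{eq:subgradientbound}) and $d_k\to0$ (Theorem~\ref{thm:descent}).

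\emph{Step 3: value convergence.} By Theorem~\ref{thm:controlled-global}, $x^k\to x^*$ with $x^*\in K_M\subset U$, and $x^*$ is a cluster point. The proof of Theorem~\ref{thm:subgradient} showed $F_i(x_i^{k+1})\to F_i(x^*_i)$ along any convergent subsequence, hence along the whole sequence. Continuity of $H$ and $C$ on $U$ then gives $J(x^k)\to J(x^*)$ and $\Phi(x^*)=\ell$.

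\emph{Step 4: the limit inclusion.} We have $p^J_{x^{k+1}}=p_{x^{k+1}}-\nabla C(x^{k+1})\to-\nabla C(x^*)$, because $p_{x^{k+1}}\to0$ and $\nabla C$ is continuous on $U$. Together with $x^{k+1}\to x^*$ and $J(x^{k+1})\to J(x^*)$, the value-compatible closedness in Proposition~\ref{prop:subdifferential-calculus} yields $-\nabla C(x^*)\in\partial J(x^*)$. Condition (C4) at $x^*$ gives $-\nabla C(x^*)\in W(x^*)$, which is orthogonal to $\Heff(x^*)\supseteq\Qeff(x^*)$. So this single subgradient annihilates every effective comparison direction, which is the final claim.

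\emph{Main obstacle.} The delicate point is Step 3, the value compatibility needed for the limiting closedness. It relies on reusing the block-minimizer comparison argument with $u_i=x^*_i$, which requires the uniform bounds of Theorem~\ref{thm:boundedness}. Everything else is linear algebra with $W=\Heff^\perp$.
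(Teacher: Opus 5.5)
Your proof is correct and follows the paper's argument. Both use smooth addition on $U$ for the certificate, the estimate $|\langle p,d\rangle|\le\|p-z\|$ for $z\in W=\Heff^\perp$ and unit $d\in\Qeff\subseteq\Heff$, and (C4) at the limit point. The paper differs only in that it reads $-\nabla C(x^*)\in\partial J(x^*)$ directly off $0\in\partial\Phi(x^*)=\partial J(x^*)+\nabla C(x^*)$, which Theorem~\ref{thm:controlled-global} already establishes, and obtains $J(x^k)\to J(x^*)$ from $J=\Phi-C$, $\Phi(x^k)\to\ell=\Phi(x^*)$ and continuity of $C$; the value-compatibility step you flag as the main obstacle is therefore already handled by the cluster-point theorem and need not be redone.
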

\begin{proof}
Smooth addition on $U$ gives Eq.~(\ref{eq:originalcertificate}). For a unit
$d\in\mathcal Q_{\mathrm{eff}}(x^{k+1})\subseteq\mathcal H_0(x^{k+1})$,
annihilation of $\nabla C$ gives
\[
 |\langle p^J_{x^{k+1}},d\rangle|
   =|\langle p_{x^{k+1}},d\rangle|
   \leq\|p_{x^{k+1}}\|.
\]
More generally $|\langle p,d\rangle|\leq\|p-z\|$ for $z\in W$;
infimizing proves the first inequality in Eq.~(\ref{eq:controlled-transfer}).
Choosing $z=-\nabla C(x^{k+1})$ proves the second, followed by the
subgradient bound. At the limit, the exact inclusion
$0\in\partial\Phi(x^*)=\partial J(x^*)+\nabla C(x^*)$ gives the displayed
single certificate. Its membership in $W(x^*)$ follows from the control
condition at this same point.

The cluster-point compatibility theorem gives $\Phi(x^*)=\ell$, and
continuity of $C$ at the finite limit gives
\[
 J(x^k)=\Phi(x^k)-C(x^k)
     \longrightarrow\ell-C(x^*)=J(x^*).
\]
Thus the original value limit follows from the controlled value limit and
the convergence of the original variables.
\end{proof}

\begin{corollary}[Ordinary full criticality]
\label{cor:full}
Suppose the original ordinary model has a genuine scale space $\mathcal S(x)$
such that
\[
 \partial J(x)\subseteq\mathcal S(x)^\perp,\qquad
 \nabla C(x)\in\mathcal S(x).
\]
Then the same controlled iteration satisfies
\[
 \operatorname{dist}(0,\partial J(x^{k+1}))
       \leq\|p^J_{x^{k+1}}\|
       \leq\|p_{x^{k+1}}\|\longrightarrow0,
 \qquad 0\in\partial J(x^*).
\]
\end{corollary}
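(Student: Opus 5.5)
The plan is to reuse the original certificate of Theorem~\ref{thm:original} and add an orthogonality argument. By Eq.~(\ref{eq:originalcertificate}), $p^J_{x^{k+1}}=p_{x^{k+1}}-\nabla C(x^{k+1})$ is a true limiting subgradient of $J$ at $x^{k+1}$. The first inequality, $\operatorname{dist}(0,\partial J(x^{k+1}))\leq\|p^J_{x^{k+1}}\|$, is then immediate from the definition of distance.

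For the second inequality, write $p_{x^{k+1}}=p^J_{x^{k+1}}+\nabla C(x^{k+1})$. The hypotheses give $p^J_{x^{k+1}}\in\partial J(x^{k+1})\subseteq\mathcal S(x^{k+1})^\perp$ and $\nabla C(x^{k+1})\in\mathcal S(x^{k+1})$. So this is an orthogonal decomposition, and Pythagoras yields
\[
\|p_{x^{k+1}}\|^2=\|p^J_{x^{k+1}}\|^2+\|\nabla C(x^{k+1})\|^2\geq\|p^J_{x^{k+1}}\|^2 .
\]
Convergence to zero then follows from Eq.~(\ref{eq:subgradientbound}) together with $d_k\to0$ from Theorem~\ref{thm:descent}. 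This also shows $\nabla C(x^{k+1})\to0$.

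For the limit inclusion there are two routes. The direct one uses Eq.~(\ref{eq:controlled-original-limit}): there $-\nabla C(x^*)\in\partial J(x^*)\subseteq\mathcal S(x^*)^\perp$, while $-\nabla C(x^*)\in\mathcal S(x^*)$ by hypothesis. Hence $\nabla C(x^*)=0$, and therefore $0\in\partial J(x^*)$. As a cross-check one can argue by closedness instead. Theorem~\ref{thm:original} gives $J(x^{k+1})\to J(x^*)$ and Theorem~\ref{thm:controlled-global} gives $x^{k+1}\to x^*$, while $p^J_{x^{k+1}}\to0$. The value-compatible closedness in Proposition~\ref{prop:subdifferential-calculus} then gives $0\in\partial J(x^*)$ without evaluating $\mathcal S$ at the limit. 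I would present this second argument, since it needs the hypotheses only along the iterates.

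The only delicate point is that the hypotheses are pointwise, so they must be applied at each iterate $x^{k+1}$, which lies in $U$ by Theorem~\ref{thm:boundedness}. The closedness route avoids needing the inclusion at $x^*$ separately. Beyond that, the proof is routine.
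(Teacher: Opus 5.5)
Your proposal is correct and matches the paper's proof. The paper uses the same orthogonal splitting $p_{x^{k+1}}=p^J_{x^{k+1}}+\nabla C(x^{k+1})$ with Pythagoras, and at the limit it takes exactly your ``direct'' route, applying orthogonality to $0\in\partial\Phi(x^*)$ to get $\nabla C(x^*)=0$ and $0\in\partial J(x^*)$. Your preferred closedness argument is equally valid and needs the hypotheses only along the iterates. The identity $\nabla C(x^*)=0$, which Corollary~\ref{cor:ordinary} later uses for the balance identities, still follows from your observation $\nabla C(x^{k+1})\to0$ together with continuity of $\nabla C$ on $U\ni x^*$.
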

\begin{proof}
The two terms in
$p_{x^{k+1}}=p^J_{x^{k+1}}+\nabla C(x^{k+1})$ are orthogonal. Hence
\[
 \|p_{x^{k+1}}\|^2
   =\|p^J_{x^{k+1}}\|^2+\|\nabla C(x^{k+1})\|^2.
\]
At the limit use $0\in\partial\Phi(x^*)$ and the same orthogonal sum to
obtain $\nabla C(x^*)=0$ and $0\in\partial J(x^*)$.
\end{proof}
The ordinary factorization construction below verifies the two inclusions
through its genuine scale curves and control gradient.

\subsection{The original proximal operator on the full composite class}
For Eq.~(\ref{e11}), a global original proximal operator gives value and
residual convergence under the original model assumptions.
\begin{theorem}[Original value and residual convergence on the full class]
\label{prop:general}
Under Assumption~\ref{ass:original}, let
$0<\lambda_-\leq\lambda_k\leq\lambda_+<\infty$ and
\begin{equation}
 x^{k+1}\in\operatorname*{arg\,min}_u
       \left\{J(u)+\frac{\lambda_k}{2}\|u-x^k\|^2\right\}.
 \label{eq:generalprox}
\end{equation}
From every $x^0\in\operatorname{dom}J$, all these minimizers exist and are in
the original domain. For a finite lower bound $m_0$ of $J$,
\begin{gather*}
 J(x^k)\to\ell_J\in\mathbb R,\qquad
 \sum_k\|x^{k+1}-x^k\|^2
       \leq\frac{2(J(x^0)-m_0)}{\lambda_-},\\
 \operatorname{dist}(0,\partial J(x^{k+1}))
       \leq\lambda_+\|x^{k+1}-x^k\|\to0.
\end{gather*}
Every finite cluster point $\bar x$ has $J(\bar x)=\ell_J$ and
$0\in\partial J(\bar x)$.
\end{theorem}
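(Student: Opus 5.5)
The plan is the standard proximal-point argument, using only Assumption~\ref{ass:original} and the calculus of Proposition~\ref{prop:subdifferential-calculus}. The steps are existence of the minimizers, sufficient decrease, a residual bound from Fermat's rule, and finally cluster-point compatibility as in Theorem~\ref{thm:subgradient}.

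\emph{Existence.} Fix $x^k\in\dom J$ and set $\psi_k(u)=J(u)+\frac{\lambda_k}{2}\|u-x^k\|^2$. This function is proper and lower semicontinuous, since $H$ is continuous and each $F_i$ is proper and lower semicontinuous. Because $J\geq m_0$, we have $\psi_k(u)\geq m_0+\frac{\lambda_-}{2}\|u-x^k\|^2$. Hence $\psi_k$ is coercive even though $J$ is not, and its sublevel set at $\psi_k(x^k)=J(x^k)<\infty$ is compact and nonempty. A minimizer therefore exists, and its value is finite, so $x^{k+1}\in\dom J$. Induction gives the whole sequence.

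\emph{Decrease and residual.} Comparing with $u=x^k$ gives
\[
J(x^{k+1})+\frac{\lambda_-}{2}\|x^{k+1}-x^k\|^2\leq J(x^k).
\]
So $J(x^k)$ is nonincreasing and bounded below by $m_0$, hence convergent to some $\ell_J\in\R$. Telescoping gives the displayed square-sum bound. For the residual, $x^{k+1}$ is a global minimizer of $\psi_k$, so Proposition~\ref{prop:subdifferential-calculus} gives $0\in\widehat\partial\psi_k(x^{k+1})$. By the smooth sum rule~\eqref{eq:smoothsum},
\[
-\lambda_k(x^{k+1}-x^k)\in\widehat\partial J(x^{k+1})\subseteq\partial J(x^{k+1}).
\]
This yields $\dist(0,\partial J(x^{k+1}))\leq\lambda_+\|x^{k+1}-x^k\|$, and the right side tends to zero because the squares are summable.

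\emph{Cluster points.} This is the step needing care, namely value compatibility. Take $x^{k_j+1}\to\bar x$. Since the displacements tend to zero, also $x^{k_j}\to\bar x$. Lower semicontinuity gives $J(\bar x)\leq\ell_J<\infty$, so $\bar x\in\dom J$. Now compare the minimizer with $u=\bar x$:
\[
J(x^{k_j+1})\leq J(\bar x)+\frac{\lambda_{k_j}}{2}\big(\|\bar x-x^{k_j}\|^2-\|x^{k_j+1}-x^{k_j}\|^2\big).
\]
The bracket tends to zero and $\lambda_k\leq\lambda_+$, so $\limsup_j J(x^{k_j+1})\leq J(\bar x)$. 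Combined with lower semicontinuity, this gives $J(x^{k_j+1})\to J(\bar x)$, and hence $J(\bar x)=\ell_J$.

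The subgradients $-\lambda_{k_j}(x^{k_j+1}-x^{k_j})\in\partial J(x^{k_j+1})$ tend to $0$. Closedness of the limiting subdifferential under value-compatible limits (Proposition~\ref{prop:subdifferential-calculus}) then gives $0\in\partial J(\bar x)$.

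The main subtlety is that boundedness of the sequence is not available. The statement therefore only concerns finite cluster points when they exist, and the argument never needs compactness beyond the coercivity of each $\psi_k$.
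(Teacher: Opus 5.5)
Your proposal is correct and follows the paper's proof step for step. Existence comes from the coercive anchored objective, decrease from comparison with the anchor, the Fr\'echet residual from Fermat's rule plus smooth addition, and value compatibility at cluster points from comparison with $u=\bar x$. The only differences are immaterial: you index the subsequence as $x^{k_j+1}$ rather than $x^{k_j}$, and you invoke closedness of $\partial J$ under value-compatible limits rather than applying the limiting definition directly to the Fr\'echet subgradients.
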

\begin{proof}
Proper lower semicontinuous block functions and the finite continuous $H$
make $J$ proper and lower semicontinuous: near any point, each $F_i$ has a
finite local lower bound, so no cancellation with another block can obstruct
lower semicontinuity of the finite sum. Each anchored objective in
Eq.~(\ref{eq:generalprox}) is proper and lower semicontinuous, with
sublevels contained in finite balls by its quadratic term and $J\geq m_0$.
A minimizing sequence has a convergent subsequence in such a sublevel;
lower semicontinuity gives a global minimizer.
Comparison with the anchor gives
\[
 J(x^{k+1})+\frac{\lambda_k}{2}\|x^{k+1}-x^k\|^2\leq J(x^k).
\]
Its lower bound proves the value limit and square-summable increments.
Expanding the global minimizing comparison at $x^{k+1}$ gives
\[
 -\lambda_k(x^{k+1}-x^k)\in\widehat\partial J(x^{k+1}),
\]
which proves the full residual estimate. For $x^{k_j}\to\bar x$, lower
semicontinuity first gives $J(\bar x)\leq\ell_J$. The increments vanish,
so $x^{k_j-1}\to\bar x$. Global comparison with $u=\bar x$ gives
\[
 J(x^{k_j})+\frac{\lambda_{k_j-1}}2\|x^{k_j}-x^{k_j-1}\|^2
 \leq J(\bar x)+\frac{\lambda_{k_j-1}}2\|\bar x-x^{k_j-1}\|^2.
\]
Taking limits gives the reverse value inequality and hence value compatibility.
The displayed true Fr\'echet subgradients tend to zero along this subsequence,
so $0\in\partial J(\bar x)$ follows from the limiting definition.
\end{proof}

The operator in Eq.~(\ref{eq:generalprox}) covers the full original composite class. The following model constructions verify the control conditions, identify the effective comparison geometry and give explicit block projections and safe parameters. Applying the controlled convergence theorems then gives finite-length original-variable convergence, graph preservation and the original-objective conclusions for these models.

We now construct the control and the direct block parameters for sparse
nonnegative factorization. We first prove the structural degeneracy of the
original graph objective. We then verify the control conditions by explicit
compactness and curvature estimates, identify all original reconstruction
changes retained by the effective directions, and obtain the original-variable
convergence and graph noncollapse results.

\subsection{Original graph models and structural degeneracy}
\label{subsec:models}
For CP factorization, the original reconstruction and objective are
\begin{equation}
Z(A)=\sum_{r=1}^{R}a_{1r}\otimes\cdots\otimes a_{Nr},\qquad
J(A)=\frac12\norm{X-Z(A)}_F^2+
\frac\alpha2\operatorname{tr}(A_N^\top LA_N)+\delta_{\E}(A),
\label{eq:cpmodel}
\end{equation}
where $A_i\in\R_+^{n_i\times R}$, $N\geq2$, $L$ is a nonzero symmetric positive semidefinite matrix and $\alpha>0$.
For $i<N$, let
\[
E_i=\{A_i\geq0:\norm{A_i}_0\leq s_i\},\qquad E_N=\R_+^{n_N\times R},\qquad
\E=\prod_{i=1}^{N}E_i.
\]
The budgets $s_i$ are any legal integers, including zero. The data $X$ may be any finite real array. NMF is the case $N=2$, with $Z=A_1A_2^\top$.

For Tucker factorization, the independent core is included among the original variables:
\begin{equation}
\begin{aligned}
Z(G,A)&=G\times_1 A_1\cdots\times_N A_N,\\
J(G,A)&=\frac12\norm{X-Z(G,A)}_F^2+
\frac\alpha2\operatorname{tr}(A_N^\top LA_N)+\delta_{\E}(G,A).
\end{aligned}
\label{eq:tuckermodel}
\end{equation}
Here $G\in\R_+^{r_1\times\cdots\times r_N}$, $A_i\in\R_+^{n_i\times r_i}$, and $\E=E_G\times\prod_iE_i$, where
\[
\begin{gathered}
E_G=\{G\geq0:\|G\|_0\leq s_G\},\qquad E_N=\R_+^{n_N\times r_N},\\
E_i=\{A_i\geq0:\|A_i\|_0\leq s_i\}\quad(i<N).
\end{gathered}
\]
The core and factor budgets are arbitrary legal integers. CP has $B=N$ blocks; Tucker has $B=N+1$, ordered as $G,A_1,\ldots,A_N$, so factor $i$ is update block $i+1$. The independent-core matrix tri-factorization $Z=A_1GA_2^\top$ is the $N=2$ instance of Eq.~(\ref{eq:tuckermodel}). These reconstructions use the standard tensor conventions of \cite{kolda2009tensor}.

The graph term measures disagreement along the graph. For a combinatorial Laplacian $L=D-W$,
\begin{equation}
\operatorname{tr}(A_N^\top LA_N)
=\frac12\sum_{u,v}W_{uv}\norm{(A_N)_{u:}-(A_N)_{v:}}^2.
\label{eq:graphmeaning}
\end{equation}
It is zero exactly when the representation is constant on every connected component. Graph regularization is useful because it organizes sample representations through these differences \cite{cai2011graph}; eliminating all such differences removes its discriminating structure.

\begin{theorem}[Original graph degeneracy]\label{app:scale-collapse}
Consider either original graph model in Eqs.~(\ref{eq:cpmodel})--(\ref{eq:tuckermodel}), with $\alpha>0$ and nonzero symmetric positive semidefinite $L$. At every original feasible point, criticality along all genuine reconstruction-preserving diagonal scales is equivalent to $LA_N=0$. Every full critical point of the original $J$ therefore satisfies $LA_N=0$. For a connected combinatorial Laplacian, the last factor has the form $A_N=\mathbf1c^\top$.
\end{theorem}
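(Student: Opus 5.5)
The plan is to exhibit explicit reconstruction-preserving diagonal scale curves through each original feasible point. Then compute the derivative of $J$ along them, and observe that only the graph term varies. For CP, fix a component $r$ and an index $j<N$, and let
\[
A(t)=\bigl(A_1,\ldots,e^{t}a_{jr},\ldots,e^{-t}a_{Nr},\ldots\bigr),
\]
meaning column $r$ of $A_j$ is multiplied by $e^t$ and column $r$ of $A_N$ by $e^{-t}$. The rank-one term $a_{1r}\otimes\cdots\otimes a_{Nr}$ is unchanged, so $Z(A(t))=Z(A)$. Positive scaling preserves nonnegativity and supports, so $\|A_j(t)\|_0$ is constant and $A(t)\in\E$. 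For Tucker, the genuine diagonal scales are $A_i\mapsto A_iD_i$ and $G\mapsto G\times_iD_i^{-1}$ with positive diagonal $D_i$. Taking $D_N=\mathrm{diag}(e^{-t}\text{ at }r)$ and compensating in the core keeps $Z$, nonnegativity and $\|G\|_0$ fixed. In every case the only varying term of $J$ is
\[
\phi(t)=\frac\alpha2\operatorname{tr}(A_N(t)^\top LA_N(t)),\qquad
\phi'(0)=-\alpha\, a_{Nr}^\top La_{Nr}.
\]
I take \emph{genuine} to mean that the scaling actually moves the point. In CP this requires the other columns of the rank-one term to be nonzero; in Tucker, the core slice is only rescaled, so this degenerate issue is vacuous there. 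If some $a_{jr}=0$ with $j<N$, then $a_{Nr}$ can be scaled freely alone, which again preserves $Z$ and feasibility. So the curve $a_{Nr}\mapsto e^{-t}a_{Nr}$ is always available.

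Criticality along all these curves means the one-sided (indeed two-sided) derivative vanishes, i.e.\ $a_{Nr}^\top La_{Nr}=0$ for every $r$. Since $L\succeq0$, this gives $L^{1/2}a_{Nr}=0$, hence $La_{Nr}=0$, i.e.\ $LA_N=0$. Conversely, $LA_N=0$ makes $\phi\equiv0$ along every scale curve, so $\phi'(0)=0$. This proves the claimed equivalence.

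For full critical points, take $p\in\partial J(A)$. By Proposition~\ref{prop:subdifferential-calculus}, $p=\nabla H(A)+n$ with $n\in N_\E(A)$, where $H$ is the smooth part. The velocity $d=\dot A(0)$ of a scale curve, and also $-d$, are tangent directions realized by the smooth curve inside $\E$ through nearby feasible points with the same supports. The key step is that every limiting normal annihilates $\pm d$. I would argue directly: the curve passes through all nearby same-support points along the sparse set $\E$, a product of unions of orthant faces. Limiting normals at $A$ are limits of regular normals at nearby $A^\ell$. For $A^\ell$ close to $A$, its support contains $\supp(A)$, and the scaled-column direction at $A^\ell$ is a two-sided feasible direction, so $\langle n^\ell,d^\ell\rangle=0$. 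Passing to the limit gives $\langle n,d\rangle=0$. Hence $0=\langle p,d\rangle=\phi'(0)$, and the equivalence yields $LA_N=0$. I expect this normal-cone step, under changing supports, to be the main obstacle. The difficulty is defining $d^\ell$ continuously; I would use the same coordinate pattern (scale the entries of $A^\ell$ in the given column) so that $d^\ell\to d$.

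Finally, for a connected combinatorial Laplacian, Eq.~(\ref{eq:graphmeaning}) together with $\operatorname{tr}(A_N^\top LA_N)=0$ forces all rows of $A_N$ to be equal along the edges. By connectivity the rows are equal throughout, so $A_N=\mathbf1c^\top$.
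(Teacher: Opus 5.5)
Your proof is correct and is essentially the paper's: you use the same two-sided column/core-slice scale curves with derivative $-\alpha a_{Nr}^\top La_{Nr}$, positive semidefiniteness to reach $LA_N=0$, and annihilation of the scale velocities by every limiting normal to handle full critical points. The only difference is minor: the paper obtains that annihilation from the coordinate form of the normals (they vanish on positive entries, and the velocity is zero on zero entries), whereas your two-sided test at nearby points with $d^\ell\to d$ is the argument the paper itself uses in Corollary~\ref{cor:ordinary}.
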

\begin{proof}
For CP, fix a component $r$ and a first factor $i<N$. The two-sided scale action
\[
a_{ir}(t)=e^t a_{ir},\qquad a_{Nr}(t)=e^{-t}a_{Nr}
\]
preserves the reconstruction, nonnegativity and every original support. Its velocity $d_{ir}$ satisfies
\begin{equation}
\ip{\nabla H(x)}{d_{ir}}
=-\alpha a_{Nr}^\top La_{Nr}.
\label{eq:scalederivative}
\end{equation}
Every original limiting normal annihilates this velocity. Indeed, on a positive coordinate a normal to a nonnegative support cone is zero, and on a zero coordinate the scale velocity is zero; the finite support description extends this conclusion to the limiting normals. Hence every $g\in\partial J(x)$ has the same pairing as Eq.~(\ref{eq:scalederivative}). Vanishing of all these pairings is equivalent to $a_{Nr}^\top La_{Nr}=0$ for every $r$, which is equivalent to $LA_N=0$ by positive semidefiniteness. If $0\in\partial J(x)$, all the pairings vanish.

For Tucker, multiply column $a_{Na}$ by $e^t$ and the core slice with $k_N=a$ by $e^{-t}$. The reconstruction and supports are again unchanged, and the derivative is $\alpha a_{Na}^\top La_{Na}$. The same normal argument proves the result. A connected combinatorial Laplacian has kernel $\operatorname{span}\{\mathbf1\}$, giving the final statement.
\end{proof}

\begin{example}\label{ex:converse}
The condition $LA_N=0$ does not imply full criticality of the original objective.
For NMF with $A_1=0$, $A_2=(1,1)^\top$, $X=(1,1)$, a legal first-factor
budget of one, and the two-node Laplacian, one has $LA_2=0$.
The feasible change $A_1=t>0$ reduces the reconstruction loss from $1$ to
$(1-t)^2$, so this point is not full critical.
\end{example}
\subsection{Control of graph scale and factor imbalance}
Write $m=N-1$, $s_{ir}=\|a_{ir}\|^2$ for CP and $s_{ia}=\|a_{ia}\|^2$ for Tucker. Let $\nu=\lambda_{\min}^{+}(L)>0$ be the smallest positive eigenvalue of $L$. Let $P$ be the orthogonal projector onto $\range L$. For each last-factor column define
\begin{equation}
\begin{gathered}
q_r=\norm{Pa_{Nr}}^2,\qquad s_r=s_{Nr}=\norm{a_{Nr}}^2,\qquad
h_r=\frac{s_r}{q_r^2},\\
U=\{x:q_r>0\text{ for every }r\}.
\end{gathered}
\label{eq:graphcoordinates}
\end{equation}
The graph-scale part of the control is
\begin{equation}
C_{\mathrm{graph}}=\epsilon\sum_r(h_r+h_r^{-1}-2),\qquad \epsilon>0.
\label{eq:graphcontrol}
\end{equation}
Under $a_{Nr}\mapsto t a_{Nr}$, $h_r\mapsto t^{-2}h_r$. The paired terms in Eq.~(\ref{eq:graphcontrol}) therefore control both directions of scale change. Since $q_r\leq s_r$, one also has $h_r\geq1/q_r$, so approach to the graph nullspace makes the control diverge.

In CP, the remaining factor scales are balanced through
\begin{equation}
b_{ir}=\frac{\norm{a_{ir}}^2-\norm{a_{N-1,r}}^2}{2},\quad i<N-1,
\qquad C_{\mathrm{bal}}=\mu\sum_{r}\sum_{i<N-1}b_{ir}^2,\quad \mu>0.
\label{eq:cpbalance}
\end{equation}
The sum is empty for NMF. In independent-core Tucker factorization, put
\begin{equation}
u_{ia}=\sum_{\mathbf k:k_i=a}G_{\mathbf k}^2,\qquad
b_{ia}=\frac{\norm{a_{ia}}^2-u_{ia}}{2},\quad i<N,
\qquad C_{\mathrm{bal}}=\mu\sum_{i<N,a}b_{ia}^2.
\label{eq:tuckerbalance}
\end{equation}
This balances factor columns with their corresponding core slices, so the independent core participates in the control.

In both models define
\begin{equation}
C=C_{\mathrm{bal}}+C_{\mathrm{graph}}\quad\text{on }U,
\qquad C=+\infty\quad\text{outside }U.
\label{eq:control}
\end{equation}
All terms are nonnegative. From any original feasible input, a finite initialization reaches $U$: choose a nonnegative coordinate vector $e_j$ with $Pe_j\ne0$ and replace only columns with $Pa_{Nr}=0$ by $a_{Nr}+e_j$. Such a coordinate exists because $P\ne0$. This modifies no sparse block and preserves the original feasibility.

\begin{theorem}[Automatic compactness and graph noncollapse]
\label{thm:compactness}
The function $\Phi=J+C$ is proper, lower semicontinuous and semialgebraic. Every nonempty finite sublevel $K_M=\{x:\Phi(x)\leq M\}$ is compact and contained in $U$. Define
\begin{equation}
 \begin{aligned}
 A_M&=2+M/\epsilon,&
 H_M&=\frac{A_M+\sqrt{A_M^2-4}}2,&
 Q_M&=\frac{2M}{\alpha\nu},\\
 T_M&=\|X\|_F+\sqrt{2M},&
 D_M&=2\sqrt{M/\mu},&
 \rho_M&=H_M^{-1/2}.
 \end{aligned}
 \label{app:level-constants}
\end{equation}
On this sublevel, every last column satisfies
\begin{equation}
 H_M^{-1}\leq h_r\leq H_M,\qquad
 H_M^{-1}\leq q_r\leq Q_M,\qquad
 H_M^{-1}\leq s_{Nr}\leq H_MQ_M^2.
 \label{app:last-bounds}
\end{equation}
For CP, all first-factor columns satisfy
\begin{equation}
 \|a_{ir}\|^2\leq
 2D_M+(T_M\sqrt{H_M})^{2/(N-1)}\quad(i<N).
 \label{app:cp-bounds}
\end{equation}
For Tucker, put
\begin{equation}
 B_G=\max\left\{\sqrt{2D_M},
       \left(2^{(N-1)/2}T_M/\rho_M\right)^{1/N}\right\}.
 \label{app:core-bound}
\end{equation}
Then
\begin{equation}
 0\leq G_{\boldsymbol{k}}\leq B_G,\qquad
 \|G\|_F^2\leq\Big(\prod_i r_i\Big)B_G^2,\qquad
 s_{ia}\leq\Big(\prod_{j\ne i}r_j\Big)B_G^2+D_M\quad(i<N).
 \label{app:tucker-bounds}
\end{equation}
If $r_{\mathrm{last}}=R$ for CP and $r_{\mathrm{last}}=r_N$ for Tucker, then
\begin{equation}
 \|LA_N\|_F\geq\nu\sqrt{r_{\mathrm{last}}/H_M}>0
 \quad\text{on }K_M.
 \label{app:graph-lower}
\end{equation}
\end{theorem}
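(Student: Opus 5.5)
We will prove the statement by reading off termwise bounds from the inequality $\Phi\le M$. Every summand of $\Phi$ is nonnegative on $U$: the loss, the graph term, $C_{\mathrm{bal}}$, and each $h_r+h_r^{-1}-2=(\sqrt{h_r}-h_r^{-1/2})^2$. Hence each summand is at most $M$ on $K_M$, and the plan is to convert these individual bounds into the listed estimates.

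\emph{Regularity.} First we settle properness, lower semicontinuity and semialgebraicity. The loss and the graph term are polynomials, $\delta_{\E}$ is the indicator of a closed semialgebraic set (a finite union of coordinate-support cones), and $C$ is rational on the semialgebraic open set $U=\{q_r>0\}$. So $\Phi$ is semialgebraic. For lower semicontinuity, the only delicate point is a sequence in $U$ approaching $\partial U$. There some $q_r\to0$. Since $q_r\le s_r$ we have $h_r\ge 1/q_r\to\infty$, so $C\to+\infty$, which matches $C=+\infty$ outside $U$. Properness follows from the finite initialization described before the theorem.

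\emph{Last-factor bounds.} From $\epsilon(h_r+h_r^{-1}-2)\le M$ we get $h_r+h_r^{-1}\le A_M$. Solving this quadratic gives $H_M^{-1}\le h_r\le H_M$. From $\frac\alpha2\operatorname{tr}(A_N^\top LA_N)\le M$ and $a^\top La\ge\nu\|Pa\|^2$ we get $q_r\le Q_M$. Combining $q_r\le s_r=h_rq_r^2\le H_Mq_r^2$ gives $q_r\ge H_M^{-1}$, hence $s_r\ge q_r\ge H_M^{-1}$ and $s_r\le H_MQ_M^2$. The graph lower bound (app:graph-lower) then follows from $\|La_{Nr}\|^2\ge\nu^2q_r\ge\nu^2/H_M$, summed over the $r_{\mathrm{last}}$ columns. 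Because $q_r$ is bounded below on $K_M$, we also get $K_M\subset U$, and closedness follows from lower semicontinuity. Once boundedness of all blocks is shown, compactness follows.

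\emph{Remaining factors, the main obstacle.} Here we must bound the first factors (CP) or the core and first factors (Tucker) using only the loss and balance terms. The loss gives $\|Z\|_F\le T_M$ via $\|X-Z\|\le\sqrt{2M}$. The balance term gives $|b_{ir}|\le\sqrt{M/\mu}$, so all the controlled squared norms within a component differ by at most $D_M$.

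For CP, nonnegativity is the key: all terms of $Z$ are entrywise nonnegative, so each rank-one term satisfies $\|a_{1r}\otimes\cdots\otimes a_{Nr}\|\le\|Z\|\le T_M$. Thus $\prod_{i<N}\|a_{ir}\|\le T_M/\|a_{Nr}\|\le T_M\sqrt{H_M}$. Let $\tau=\|a_{N-1,r}\|^2$. Balance gives $\|a_{ir}\|^2\ge\tau-D_M$, and we split into two cases. If $\tau\le 2D_M$, the bound is immediate. Otherwise each factor satisfies $\|a_{ir}\|^2\ge\tau/2$, so $(\tau/2)^{(N-1)/2}\le T_M\sqrt{H_M}$. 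This yields $\|a_{ir}\|^2\le\tau+D_M\le 2D_M+(T_M\sqrt{H_M})^{2/(N-1)}$ up to the constant bookkeeping in (app:cp-bounds); matching the exact constant is where I expect to spend effort.

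For Tucker, we fix a maximal core entry $G_{\mathbf k}$. Nonnegativity gives $G_{\mathbf k}\prod_i\|a_{i k_i}\|\le T_M$. Moreover $u_{ik_i}\ge G_{\mathbf k}^2$, so balance gives $\|a_{ik_i}\|^2\ge G_{\mathbf k}^2-D_M$, which is at least $G_{\mathbf k}^2/2$ when $G_{\mathbf k}^2\ge 2D_M$. For the last factor, $\|a_{Nk_N}\|\ge\rho_M$. Together these give $G_{\mathbf k}^{N}2^{-(N-1)/2}\rho_M\le T_M$, so $G_{\mathbf k}\le B_G$. The Frobenius bound on $G$ and the bound $s_{ia}\le u_{ia}+D_M\le(\prod_{j\ne i}r_j)B_G^2+D_M$ follow directly. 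This completes boundedness of all blocks and hence compactness of $K_M$.
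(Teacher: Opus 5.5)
Your route matches the paper's: termwise bounds from $\Phi\le M$, the same last-column and graph estimates, the same lower-semicontinuity argument via $h_r\ge1/q_r$, and, for Tucker, the paper's halving threshold $G_{\mathbf k}>\sqrt{2D_M}$. The gap is the CP estimate \eqref{app:cp-bounds}. You defer it to ``constant bookkeeping,'' but your case split cannot produce it.

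Put $m=N-1$ and $P=T_M\sqrt{H_M}$.
\begin{itemize}
\item In your first case, $\tau\le2D_M$ only gives $\|a_{ir}\|^2\le\tau+D_M\le3D_M$ for $i<m$. Since $3D_M>2D_M+P^{2/m}$ whenever $D_M>P^{2/m}$ (for instance when $\mu$ is small), this case is not immediate.
\item In your second case, $(\tau/2)^{m/2}\le P$ gives $\tau\le2P^{2/m}$, hence only $\|a_{ir}\|^2\le D_M+2P^{2/m}$. This exceeds the target whenever $P^{2/m}>D_M$.
\end{itemize}
As written, you prove only $\|a_{ir}\|^2\le\max\{3D_M,\,D_M+2P^{2/m}\}$. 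That suffices for boundedness and compactness of $K_M$, but not for the explicit inequality stated in the theorem.

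The halving is what costs you. It is needed for Tucker because the bounded quantity $G_{\mathbf k}$ enters the product as an extra factor. In CP, the reference $\tau=\|a_{mr}\|^2$ is itself one of the $m$ factors, so a direct threshold works:
\begin{itemize}
\item Suppose $\tau>D_M+P^{2/m}$. Then $\|a_{ir}\|^2\ge\tau-D_M>P^{2/m}$ for every $i<m$, and also $\tau>P^{2/m}$.
\item Hence $\prod_{i\le m}\|a_{ir}\|>P$, contradicting $\prod_{i<N}\|a_{ir}\|\le P$.
\item Therefore $\tau\le D_M+P^{2/m}$, and $\|a_{ir}\|^2\le\tau+D_M\le2D_M+P^{2/m}$, which is exactly \eqref{app:cp-bounds}.
\end{itemize}
When $m=1$, the product bound alone gives $\|a_{1r}\|^2\le P^2$. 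This is the argument the paper uses.
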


\begin{proof}
All finite terms in $\Phi$ are nonnegative. From
$h_r+h_r^{-1}\leq A_M$, the two roots of
$h^2-A_Mh+1=0$ give the first inequalities in
\eqref{app:last-bounds}. Since $q_r\leq s_{Nr}=h_rq_r^2$, we have
$q_r\geq h_r^{-1}\geq H_M^{-1}$. The spectral inequality
$a_{Nr}^TLa_{Nr}\geq\nu q_r$ gives $q_r\leq Q_M$.
The identity $s_{Nr}=h_rq_r^2$ now gives the remaining bounds.
In particular, every last-column norm is at least $\rho_M$.

For CP, each rank-one term is nonnegative and bounded entrywise by $Z(x)$.
Consequently,
\begin{equation}
 \prod_{i=1}^N\|a_{ir}\|
 =\|a_{1r}\otimes\cdots\otimes a_{Nr}\|_F
 \leq\|Z(x)\|_F\leq T_M.
 \label{app:component-bound}
\end{equation}
Thus $\prod_{i<N}\|a_{ir}\|\leq T_M\sqrt{H_M}$.
The balance terms give
$|\|a_{ir}\|^2-\|a_{mr}\|^2|\leq D_M$.
If the reference squared norm were larger than
$D_M+(T_M\sqrt{H_M})^{2/m}$, then all $m$ squared norms would exceed
$(T_M\sqrt{H_M})^{2/m}$, contradicting the product bound.
Adding $D_M$ gives \eqref{app:cp-bounds}. When $m=1$, the empty balance sum is zero and the product bound itself gives the stated estimate.

For Tucker, each nonnegative term
$G_{\boldsymbol{k}}\bigotimes_i a_{ik_i}$ is also bounded entrywise by $Z(x)$.
The balance terms give $|s_{ia}-u_{ia}|\leq D_M$.
If $G_{\boldsymbol{k}}>\sqrt{2D_M}$, then for each $i<N$,
\[
 s_{ik_i}\geq u_{ik_i}-D_M\geq G_{\boldsymbol{k}}^2/2.
\]
It follows that
\[
 T_M\geq G_{\boldsymbol{k}}\prod_i\|a_{ik_i}\|
 \geq \rho_M\,G_{\boldsymbol{k}}^N/2^{(N-1)/2}.
\]
Together with the case $G_{\boldsymbol{k}}\leq\sqrt{2D_M}$,
this proves \eqref{app:core-bound}. Counting the entries of each core slice proves
\eqref{app:tucker-bounds}. These arguments do not divide by a first-factor column or a core entry.
Finally,
\[
 \|LA_N\|_F^2\geq\nu^2\sum_r q_r
 \geq\nu^2r_{\mathrm{last}}/H_M,
\]
which proves \eqref{app:graph-lower}.

The feasible set $\E$ is a finite union of closed nonnegative coordinate cones.
On $\E\cap U$, the finite part of $\Phi$ is continuous. If a bounded sequence in
$U$ approaches a point with $q_r=0$, then $h_r\geq1/q_r$ tends to infinity.
The extension in \eqref{eq:control} is therefore lower semicontinuous.
The estimates above make $K_M$ bounded. Lower semicontinuity makes it closed,
and \eqref{app:last-bounds} separates it from $U^c$.

Choose a coordinate vector $e_j$ with $Pe_j\ne0$, which exists since $L\ne0$.
All first factors and the Tucker core can be zero, and every last column can be
$e_j$. This is a finite feasible point for any legal budgets, proving properness.
The finite graph of $\Phi$ on each coordinate cone is described by polynomial
equalities and inequalities after multiplying its rational expressions by their
positive denominators. There are finitely many cones. Hence $\Phi$ is semialgebraic.
\end{proof}

\begin{lemma}
\label{app:initialization-kl}
Every original feasible input has a finite initialization in $\E\cap U$ that leaves
the core and all first-factor supports unchanged. Moreover, at every
$\bar x\in \E\cap U$, there are a neighborhood $O$, $\eta>0$, $\rho>0$ and
$\theta\in[0,1)$ such that
\begin{equation}
 \varphi'(\Phi(x)-\Phi(\bar x))\operatorname{dist}(0,\partial\Phi(x))\geq1,
 \qquad \varphi(t)=\frac{\rho}{1-\theta}t^{1-\theta},
 \label{app:local-kl}
\end{equation}
whenever $x\in O$ and $\Phi(\bar x)<\Phi(x)<\Phi(\bar x)+\eta$.
\end{lemma}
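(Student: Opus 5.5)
The lemma has two parts, and I will handle them separately: an explicit construction for the initialization, and a semialgebraic argument for the local KL inequality.

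\emph{Initialization.} I will use the construction stated after Eq.~(\ref{eq:control}). Since $L\neq0$, the projector $P$ is nonzero, so some column $Pe_j$ of $P$ is nonzero. For every last-factor column with $Pa_{Nr}=0$ (respectively $Pa_{Na}=0$ for Tucker), replace $a_{Nr}$ by $a_{Nr}+e_j$. Then $P(a_{Nr}+e_j)=Pe_j\neq0$, so $q_r>0$. The modified column is still nonnegative, and the last block $E_N$ carries no sparsity budget, so feasibility is preserved. Columns with $q_r>0$ are left untouched. Hence every $q_r$ becomes positive and the point lies in $\E\cap U$. The core and all first factors are not modified at all, so their supports are unchanged. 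Finally, all entries are finite, so $\Phi$ is finite there and the point lies in $\dom J\cap U$.

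\emph{Local KL with power desingularizer.} Theorem~\ref{thm:compactness} shows that $\Phi$ is proper, lower semicontinuous and semialgebraic. I will invoke the result quoted in the preliminaries: a proper lower semicontinuous semialgebraic function satisfies the KL property at every point of $\dom\partial\Phi$, with $\varphi(t)=\frac{\rho}{1-\theta}t^{1-\theta}$ for some $\rho>0$ and $\theta\in[0,1)$ \cite{bolte2007lojasiewicz,bolte2007clarke}. For the points of $\E\cap U$ outside $\dom\partial\Phi$ I will not need a subgradient at $\bar x$ itself. The statement only asks for \eqref{app:local-kl} at nearby points $x$ with $\Phi(\bar x)<\Phi(x)<\Phi(\bar x)+\eta$. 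The Bolte--Daniilidis--Lewis--Shiota theorem gives exactly this at every point of the domain, using the convention $\dist(0,\varnothing)=+\infty$, under which the inequality holds trivially wherever $\partial\Phi(x)=\varnothing$.

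For a more self-contained argument, one can localize. Near $\bar x\in U$, $C$ is smooth and the quantities $q_r$ stay bounded away from zero. The feasible set $\E$ is a finite union of closed coordinate cones $K_S$ indexed by supports. Taking $O$ small, only cones containing $\bar x$ meet $O$. On each such cone, $\Phi=f+\delta_{K_S}$, and $f$ is a rational function with positive denominators, hence real-analytic and semialgebraic on $O$. One then obtains a KL exponent for $\Phi$ on $O$ as a minimum of finitely many exponents, taking the largest $\theta$ and a common $\rho$ and $\eta$.

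The main obstacle is this last combination step. The subdifferential of a minimum of finitely many pieces, here $\delta_{\E}$ as a union of cones, is not simply the union of the pieces' subdifferentials, so the local exponents cannot naively be merged. I will therefore rely primarily on the global semialgebraic result, which handles unions automatically. The finite-cone discussion serves only as an explanation of why the exponent is uniform near $\bar x$.
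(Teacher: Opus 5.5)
Your proposal is correct. The initialization is exactly the paper's construction. For the K\L{} part, the only difference is how the semialgebraic theory is invoked.

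You apply the general theorem for proper lower semicontinuous semialgebraic functions directly to $\Phi$. That theorem absorbs the fact that $\dom\Phi=\E\cap U$ is not closed and that $C$ blows up at the boundary of $U$. The paper instead localizes. It replaces $\Phi$ by $\Phi-\Phi(\bar x)+\delta_{\overline B}$ for a closed ball $\overline B\subset U$ about $\bar x$. This function has compact domain $\E\cap\overline B$ and a continuous finite part, so a bounded-region subanalytic subgradient inequality, which requires a closed domain and continuity on it, gives $(\Phi(x)-\Phi(\bar x))^\theta\leq\rho\|v\|$. The paper then transfers this back, because the two functions agree up to a constant on the open half-radius ball, where their Fr\'echet and limiting subdifferentials coincide.

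Your route is shorter and uses precisely the fact quoted in the preliminaries. The paper's route needs only the more classical result for functions continuous on a closed domain, and it makes the locality explicit.

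Two minor points.

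\emph{No points outside $\dom\partial\Phi$.} There are no points of $\E\cap U$ outside $\dom\partial\Phi$: smooth addition gives $\partial\Phi(\bar x)=\nabla f(\bar x)+N_{\E}(\bar x)\ni\nabla f(\bar x)$. You can therefore drop that paragraph, including the claim that the theorem holds at every point of the domain.

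\emph{The cone-by-cone obstacle is not real.} Take a limiting subgradient at $x$ and pass to a subsequence of its defining sequence lying in a single cone $K_S$. Since $\Phi\leq f+\delta_{K_S}$ with equality on $K_S$, this shows $\partial\Phi(x)\subseteq\bigcup_{S:\,x\in K_S}\partial(f+\delta_{K_S})(x)$. That is exactly the direction needed to bound $\dist(0,\partial\Phi(x))$ from below. Taking the largest exponent, the largest $\rho$, and $\eta\leq1$ does merge the pieces. Each piece still needs the same semialgebraic K\L{} theory, though, so the aside does not make the argument more self-contained.
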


\begin{proof}
Choose $e_j$ as in the preceding proof. Replace only last columns with $Pa=0$
by $a+e_j$. Their new projections equal $Pe_j$, and the last block has no
sparsity constraint. All other variables are unchanged.

For the K\L{} inequality, take a closed ball $\overline B$ about $\bar x$
contained in $U$ and localize the function as
$\Phi-\Phi(\bar x)+\delta_{\overline B}$. It is proper, lower semicontinuous
and semialgebraic. Its finite domain $\E\cap\overline B$ is compact, and its
finite part is continuous there, so its finite graph is closed as well.
The bounded-region subanalytic subgradient inequality in Theorem~11,
Corollary~12 and Remark~9 of \cite{bolte2007clarke}, with the
function-value-compatible limiting subdifferential,
gives constants $\rho>0$, $\theta\in[0,1)$ and $\eta>0$ such that
\[
 (\Phi(x)-\Phi(\bar x))^\theta\leq\rho\|v\|
\]
for every limiting subgradient of the localized function at sufficiently small
positive function values. In the open half-radius ball the localized and original
functions coincide locally up to a constant. Their Fr\'echet and limiting
subdifferentials coincide there. Taking the infimum over $v$ proves
\eqref{app:local-kl}. The displayed $\varphi$ is continuous at zero, concave,
and has positive derivative on $(0,\eta)$.
\end{proof}

\subsection{Explicit block formulas and direct safe parameters}
\label{app:safe}

Here $Y$ denotes the current mixed state before a block update,
$f=H+C$, and $g_j=\nabla_jf(Y)$. The cyclic states are
$Y=y^{k,j-1}$, and $c_j^k=1/\sigma_j^k=C_{\mathrm{safe}}(Y,j)$.
Fix $\delta>0$ and $\beta=1.01$. Every formula below uses the current mixed
state. For Tucker, the block order is the independent core followed by
$A_1,\ldots,A_N$. In the computational formulas, a factor label $i$ identifies
$A_i$ and a core label $G$ identifies the core. Their numerical cyclic block
indices are $i$ for CP and $i+1$ for Tucker, with the Tucker core at cyclic
index $1$.

\subsubsection{Original projections, gradients and loss curvatures}

For the original sparse nonnegative set $E_i$, nonnegative hard thresholding \cite{beck2013sparsity,bolte2014proximal} gives the projection
\begin{equation}
P_{E_i}(V)=\mathcal H_{s_i}([V]_+),\qquad P_{E_N}(V)=[V]_+,
\label{eq:hardprojection}
\end{equation}
where $\mathcal H_{s_i}$ retains the $s_i$ largest entries. On a fixed support,
the nearest point is its positive part; retaining the largest positive entries
maximizes the reduction in squared distance. This proves the formula, including
ties resolved by a fixed ordering and zero or full budgets.
The same formula applies to the flattened core.

The CP control gradients are, columnwise,
\begin{equation}
 \begin{aligned}
 (\nabla_iC)_r&=\mu(s_{ir}-s_{mr})a_{ir}&& (i<m),\\
 (\nabla_mC)_r&=\mu\sum_{i<m}(s_{mr}-s_{ir})a_{mr},\\
 (\nabla_NC)_r&=\epsilon(1-h_r^{-2})
     \left(\frac{2a_{Nr}}{q_r^2}-\frac{4s_{Nr}Pa_{Nr}}{q_r^3}\right).
 \end{aligned}
 \label{app:cp-control-grad}
\end{equation}
When $m=1$, the first two lines together mean that the sole first-factor control
gradient is zero. For Tucker,
\begin{equation}
 \begin{aligned}
 (\nabla_iC)_a&=\mu(s_{ia}-u_{ia})a_{ia}&& (i<N),\\
 (\nabla_GC)_{\boldsymbol{k}}
   &=-\mu G_{\boldsymbol{k}}\sum_{i<N}(s_{ik_i}-u_{ik_i}),
 \end{aligned}
 \label{app:tucker-control-grad}
\end{equation}
and the last-column formula in \eqref{app:cp-control-grad} is unchanged.

The least-squares term is quadratic in each individual block. Its exact largest
block-Hessian eigenvalue is
\begin{equation}
 \begin{aligned}
 \ell_{\mathrm{loss},i}
   &=\lambda_{\max}\left(\mathop{\ast}_{j\ne i}(A_j^TA_j)\right)
                         &&\text{(CP factor)},\\
 \ell_{\mathrm{loss},G}
   &=\prod_i\lambda_{\max}(A_i^TA_i)
                         &&\text{(Tucker core)},\\
 \ell_{\mathrm{loss},i}
   &=\lambda_{\max}(B_iB_i^T)
                         &&\text{(Tucker factor)},
 \end{aligned}
 \label{app:loss-curvature}
\end{equation}
where $\ast$ is the Hadamard product and the unfolding convention defines
$B_i$ by $Z_{(i)}=A_iB_i$. Explicitly,
$B_i=G_{(i)}(\bigotimes_{j\ne i}A_j)^T$, with the same mode order as the
unfolding. These formulas follow by differentiating the corresponding linear
least-squares map; the core Hessian is a Kronecker product of factor Gram matrices.
Finite upper bounds may also replace these eigenvalues. For a symmetric positive
semidefinite Gram matrix $M$, for example,
\[
 \lambda_{\max}(M)\leq
 \min\left\{\|M\|_F,\ \max_a\sum_b|M_{ab}|\right\}.
\]

\subsubsection{The last factor}

For one last column write $a=p+z$, where $p=Pa$, $z=(I-P)a$, and put
$q=\|p\|^2>0$, $v=\|z\|^2$, $s=q+v$, $\tau=q/s$. Direct differentiation gives
\begin{equation}
 \begin{aligned}
 \nabla^2(s/q^2)
 &=\frac{2I}{q^2}-\frac{8(ap^T+pa^T)}{q^3}
        +\frac{24spp^T}{q^4}-\frac{4sP}{q^3},\\
 \nabla^2(q^2/s)
 &=4\tau P-2\tau^2I+
       \frac8s(p-\tau a)(p-\tau a)^T.
 \end{aligned}
 \label{app:rational-hessians}
\end{equation}
When $v>0$, the first matrix restricted to the orthonormal basis
$p/\sqrt q,z/\sqrt v$ is
\begin{equation}
 \begin{pmatrix}
 6/q^2+20v/q^3&-8\sqrt v/q^{5/2}\\
 -8\sqrt v/q^{5/2}&2/q^2
 \end{pmatrix}.
 \label{app:radial-hessian}
\end{equation}
Its remaining eigenvalues, where the corresponding subspaces exist, are
$-2/q^2-4v/q^3$ on $\operatorname{range}(P)\cap p^\perp$ and $2/q^2$ on
$\ker(P)\cap z^\perp$. At $v=0$ the actual radial eigenvalue is $6/q^2$;
the same upper bounds below apply by continuity. Also,
$\|p-\tau a\|^2=s\tau(1-\tau)$, so
\begin{equation}
 \lambda_{\max}(\nabla^2(q^2/s))
 \leq 4\tau-2\tau^2+8\tau(1-\tau)
 =12\tau-10\tau^2\leq18/5.
 \label{app:inverse-h-bound}
\end{equation}

Define
\begin{equation}
 \Theta=\{1/64,1/32,1/16,1/8,1/4,1/2\},\qquad
 \lambda_+(a,d,b)=\frac{a+d+\sqrt{(a-d)^2+4b^2}}2.
 \label{app:theta-set}
\end{equation}
For each column $r$ and $\vartheta\in\Theta$, use the current values in $Y$ to set
\begin{equation}
 \begin{aligned}
 a_{r\vartheta}&=(1-\vartheta)^2q_r,&
 b_{r\vartheta}&=(\|(I-P)a_{Nr}\|+\vartheta\sqrt{q_r})^2,\\
 U_{r\vartheta}
 &=\lambda_+\left(
   \frac6{a_{r\vartheta}^2}+\frac{20b_{r\vartheta}}{a_{r\vartheta}^3},
   \frac2{a_{r\vartheta}^2},
   \frac{8\sqrt{b_{r\vartheta}}}{a_{r\vartheta}^{5/2}}
   \right).
 \end{aligned}
 \label{app:column-ball-curvature}
\end{equation}
Here $a_{r\vartheta}$ and $b_{r\vartheta}$ are scalars, distinguished by their two
indices from the factor columns. Put
\begin{equation}
 \begin{aligned}
 C_{r\vartheta}=
 \max\left\{2\delta,
       \frac{\|(g_N)_r\|}{\vartheta\sqrt{q_r}},
       \ell_{\mathrm{loss},N}+\alpha\lambda_{\max}(L)+\delta
          +\epsilon(U_{r\vartheta}+18/5)\right\},\\
 C_{\mathrm{safe}}(Y,N)
       =\beta\max_r\min_{\vartheta\in\Theta}C_{r\vartheta}.
 \end{aligned}
 \label{app:safe-last}
\end{equation}
The block label $N$ in \eqref{app:safe-last} means the last factor; it is the
last of $N+1$ update blocks in the Tucker model.

\begin{lemma}
\label{app:safe-last-lemma}
For every $c\geq C_{\mathrm{safe}}(Y,N)$, the single update
$A_N^+=[A_N-g_N/c]_+$ belongs to $U$ and satisfies
\begin{equation}
 f(Y+d)-f(Y)\leq\langle g_N,d_N\rangle
             +\frac{c-\delta}{2}\|d_N\|_F^2,
 \qquad d_N=A_N^+-A_N.
 \label{app:majorization}
\end{equation}
\end{lemma}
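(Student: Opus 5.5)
The plan is to argue column by column and to reduce the whole estimate to the segment from $Y$ to $Y+d$, on which the descent lemma (Proposition~\ref{prop:descentlemma}) applies. Only the last block moves. The function $f$ then splits into three pieces: the least-squares loss, which is quadratic in $A_N$ with exact curvature $\ell_{\mathrm{loss},N}$; the graph term, with curvature $\alpha\lambda_{\max}(L)$; and the columnwise sum $\epsilon\sum_r\big(s_r/q_r^2+q_r^2/s_r-2\big)$. The balance part does not involve $A_N$. The loss and graph pieces are globally quadratic in the block, so their descent inequalities hold on all of the segment. The difficulty is confined to the rational terms, which are finite only while $q_r>0$.

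First, fix a column $r$. Let $\vartheta^*\in\Theta$ attain the inner minimum in \eqref{app:safe-last}, and put $c\geq C_{\mathrm{safe}}(Y,N)\geq\beta C_{r\vartheta^*}$. The projection $[\cdot]_+$ is nonexpansive and fixes $a_{Nr}\geq0$. Hence the new column satisfies
\[
\|a_{Nr}^+-a_{Nr}\|\leq\|(g_N)_r\|/c\leq\vartheta^*\sqrt{q_r}/\beta<\vartheta^*\sqrt{q_r}.
\]
So every point $a$ on the segment lies in the ball of radius $\vartheta^*\sqrt{q_r}$ about $a_{Nr}$. On that ball we have $\|Pa\|\geq(1-\vartheta^*)\sqrt{q_r}$, which gives $q(a)\geq a_{r\vartheta^*}>0$. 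We also have $\|(I-P)a\|\leq\|(I-P)a_{Nr}\|+\vartheta^*\sqrt{q_r}$, which gives $v(a)\leq b_{r\vartheta^*}$. In particular $A_N^+$, and the whole segment, lies in $U$.

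Second, bound the Hessian of $s/q^2$ on the ball. By \eqref{app:radial-hessian} and the listed eigenvalues, the largest eigenvalue is $\lambda_+$ of the $2\times2$ block; the other eigenvalues are $2/q^2$ and something negative, both dominated. The entries of that block are decreasing in $q$ and increasing in $v$. Since $\lambda_+(a,d,b)$ is monotone in each argument for $a,d,|b|\geq0$, substituting $q\geq a_{r\vartheta}$ and $v\leq b_{r\vartheta}$ gives $\lambda_{\max}\leq U_{r\vartheta}$. The case $v=0$ is handled by continuity, as the paper notes. For $q^2/s$, \eqref{app:inverse-h-bound} gives the bound $18/5$ everywhere. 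Together these show that the column rational term has Hessian at most $\epsilon(U_{r\vartheta}+18/5)$ along the segment. Only upper curvature bounds are needed, since the integral form of the descent lemma uses the one-sided bound $\langle\nabla^2 f\,d,d\rangle\leq L\|d\|^2$.

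Third, assemble. The Hessian of the block-$N$ restriction of $f$ is block diagonal across columns in the rational part, and the loss and graph Hessians add to it. Summed over columns, the quadratic-form bound along the segment is therefore at most
\[
\Big(\ell_{\mathrm{loss},N}+\alpha\lambda_{\max}(L)+\max_r\epsilon(U_{r\vartheta_r}+18/5)\Big)\|d_N\|^2\leq(c-\delta)\|d_N\|^2,
\]
because $c\geq C_{r\vartheta_r}\geq\ell_{\mathrm{loss},N}+\alpha\lambda_{\max}(L)+\delta+\epsilon(U_{r\vartheta_r}+18/5)$. A second-order Taylor expansion with integral remainder along the segment then gives \eqref{app:majorization}.

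I expect the main obstacle to be the rigorous monotonicity of the $2\times2$ block bound in $(q,v)$ throughout the ball. This also needs a check that the eigenvector decomposition remains valid when $v$ varies, in particular when $z$ or $p^\perp\cap\range P$ is degenerate. I would handle this by noting that the general symmetric matrix \eqref{app:rational-hessians}, evaluated at each point, has the explicit invariant decomposition given there. I would then bound its top eigenvalue pointwise by the entrywise-monotone $\lambda_+$.
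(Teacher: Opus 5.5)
Your proposal is correct and follows the paper's proof essentially step for step: nonexpansiveness of $[\cdot]_+$ confines each column to the $\vartheta_r\sqrt{q_r}$-ball, which gives $q\geq a_{r\vartheta_r}$ and $v\leq b_{r\vartheta_r}$; you then bound the $2\times2$ block and add $18/5$, assemble block-diagonally with the loss and graph curvatures, and conclude by Taylor's formula with integral remainder. The only difference is that you bound the $2\times2$ block through monotonicity of $\lambda_+$ in its entries, whereas the paper bounds the quadratic form by the nonnegative matrix acting on the absolute values of the two coordinates. Both arguments work pointwise at each point of the segment, including where $v=0$, so the obstacle you flag does not actually arise.
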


\begin{proof}
For each $r$, choose a minimizing $\vartheta_r$ in the finite set $\Theta$.
The nonnegative projection is nonexpansive and fixes the feasible current column.
Thus
\[
 \|(d_N)_r\|\leq\|(g_N)_r\|/c
       \leq\vartheta_r\sqrt{q_r}.
\]
On this column ball, every point $w$ satisfies
$\|Pw\|\geq(1-\vartheta_r)\sqrt{q_r}$ and
$\|(I-P)w\|\leq\|(I-P)a_{Nr}\|+\vartheta_r\sqrt{q_r}$.
Hence its squared projected norm is at least $a_{r\vartheta_r}$ and its squared
kernel norm is at most $b_{r\vartheta_r}$.
The diagonal entries and the absolute off-diagonal entry in
\eqref{app:radial-hessian} are bounded by the entries defining
$U_{r\vartheta_r}$. Bounding a quadratic form by the absolute values of its two
coordinates therefore gives
$\nabla^2(s/q^2)\preceq U_{r\vartheta_r}I$ on this ball.
The remaining range eigenvalues are nonpositive, and the remaining kernel
eigenvalues are bounded by $2/a_{r\vartheta_r}^2$.
Together with \eqref{app:inverse-h-bound}, this bounds the full column-control
Hessian by $\epsilon(U_{r\vartheta_r}+18/5)I$.
The control Hessian is block diagonal across columns. The loss and graph Hessians
are bounded by their terms in \eqref{app:safe-last}. Consequently, on the whole
actual update segment, $q_r>0$ for all $r$ and
$\nabla_{NN}^2 f\preceq(c-\delta)I$.
Taylor's integral formula proves \eqref{app:majorization}.
\end{proof}

\subsubsection{First factors and the independent core}

The scalar quartic identity needed for these blocks is
\begin{equation}
 \nabla^2\left[\frac\mu4(\|a\|^2-b)^2\right]
 =\mu\big[(\|a\|^2-b)I+2aa^T\big]
 \preceq\mu(3\|a\|^2-b)I.
 \label{app:quartic-hessian}
\end{equation}
The upper bound remains valid when $a=0$ or $b=0$, and may be negative.
At radius $t>0$, the following one-sided control curvature bounds follow:
\begin{equation}
 \begin{aligned}
 \ell_{\mathrm{ctrl},i}(t)
   &=\mu\max_r\{3(\|a_{ir}\|+t)^2-s_{mr}\}
                          &&\text{(CP, }i<m\text{)},\\
 \ell_{\mathrm{ctrl},m}(t)
   &=\mu\max_r\{3(m-1)(\|a_{mr}\|+t)^2-\sum_{i<m}s_{ir}\}
                          &&\text{(CP, }m>1\text{)},\\
 \ell_{\mathrm{ctrl},1}(t)&=0
                          &&\text{(CP, }m=1\text{)},\\
 \ell_{\mathrm{ctrl},i}(t)
   &=\mu\max_a\{3(\sqrt{s_{ia}}+t)^2-u_{ia}\}
                          &&\text{(Tucker, }i<N\text{)},\\
 \ell_{\mathrm{ctrl},G}(t)
   &=\mu\sum_{i<N}\max_a\{3(\sqrt{u_{ia}}+t)^2-s_{ia}\}
                          &&\text{(Tucker core)}.
 \end{aligned}
 \label{app:first-core-curvature}
\end{equation}
For the core, slices for a fixed mode are disjoint. Different modes contribute a
sum of Hessians; their upper bounds are added as in the last line.

For each such update block $j$, define the finite radius set
\begin{equation}
 \rho_j=\max\{1,\|Y_j\|_F\},\qquad
 \mathcal T_j=\{1,\rho_j/64,\rho_j/16,\rho_j/4,\rho_j,4\rho_j\},
 \label{app:radius-set}
\end{equation}
and set
\begin{equation}
 \begin{aligned}
 C_j(t)&=\max\left\{2\delta,\frac{2\|g_j\|_F}{t},
           \ell_{\mathrm{loss},j}+\ell_{\mathrm{ctrl},j}(t)+\delta\right\},\\
 C_{\mathrm{safe}}(Y,j)&=\beta\min_{t\in\mathcal T_j}C_j(t).
 \end{aligned}
 \label{app:safe-first-core}
\end{equation}

\begin{lemma}
\label{app:safe-first-lemma}
For a first-factor or core block, every $c\geq C_{\mathrm{safe}}(Y,j)$ gives an
original projection $Y_j^+\in P_{E_j}(Y_j-g_j/c)$ satisfying
\eqref{app:majorization} with block $j$ in place of $N$.
\end{lemma}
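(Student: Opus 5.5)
The plan follows the proof of Lemma~\ref{app:safe-last-lemma}: trap the update in a ball of controlled radius, bound the block Hessian of $f$ on that ball, and close with Taylor's formula. Fix $c\geq C_{\mathrm{safe}}(Y,j)$ and let $t\in\mathcal T_j$ be a radius attaining the minimum in \eqref{app:safe-first-core}. Then $c\geq C_j(t)$, which gives the three facts we need:
\[
c\geq 2\delta,\qquad \|g_j\|_F\leq ct/2,\qquad c-\delta\geq\ell_{\mathrm{loss},j}+\ell_{\mathrm{ctrl},j}(t).
\]
Let $Y_j^+\in P_{E_j}(Y_j-g_j/c)$ be the hard-thresholding projection from \eqref{eq:hardprojection}. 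It is a global block minimizer in Eq.~(\ref{e12}), since that subproblem is exactly the projection of $Y_j-g_j/c$ onto $E_j$.

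The first step is the radius bound $\|Y_j^+-Y_j\|_F\leq t$. Nonexpansiveness is not available here, because $E_j$ is nonconvex. Instead use that $Y_j\in E_j$ is a competitor in the projection problem:
\[
\|Y_j^+-(Y_j-g_j/c)\|\leq\|g_j\|/c.
\]
The triangle inequality then gives
\[
\|d_j\|\leq 2\|g_j\|/c\leq t.
\]
This is exactly why the factor $2$ appears in $2\|g_j\|_F/t$. The whole segment $Y+\theta d$, $\theta\in[0,1]$, therefore stays in the block ball of radius $t$ around $Y_j$. The last factor is unchanged, so $U$-membership and $C_{\mathrm{graph}}$ are unaffected.

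Next, bound $\nabla^2_{jj}f$ on this ball. The loss part is quadratic in block $j$, with exact top eigenvalue $\ell_{\mathrm{loss},j}$ from \eqref{app:loss-curvature}, which is constant along the segment. The graph term and $C_{\mathrm{graph}}$ do not depend on block $j$. For $C_{\mathrm{bal}}$, write each term as $\frac\mu4(\|a\|^2-b)^2$ with $b$ fixed and apply \eqref{app:quartic-hessian}. A column $a'$ in the ball satisfies $\|a'\|\leq\|a\|+t$, which gives the bound $\mu(3(\|a\|+t)^2-b)$.

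The individual cases go as follows.
\begin{itemize}
\item CP factor $i<m$: the only balance term involving $a_{ir}$ has $b=s_{mr}$, and the Hessian is block diagonal across columns, so the bound is the maximum over $r$.
\item CP reference factor $m$: column $a_{mr}$ enters $m-1$ terms, and their Hessian bounds add to $\mu\{3(m-1)(\|a_{mr}\|+t)^2-\sum_i s_{ir}\}$.
\item Tucker factor $i<N$: use $b=u_{ia}$, which does not depend on $A_i$.
\item Tucker core: for a fixed mode the slices are disjoint, so the Hessian is block diagonal with bound $\max_a$. Treat the slice vector as $a$ with norm $\sqrt{u_{ia}}$ and $b=s_{ia}$, then add the bounds over modes $i<N$.
\end{itemize}
In every case $\nabla^2_{jj}f\preceq(\ell_{\mathrm{loss},j}+\ell_{\mathrm{ctrl},j}(t))I\preceq(c-\delta)I$ on the segment.

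Finally, Taylor's integral formula, in the form of Proposition~\ref{prop:descentlemma} restricted to block $j$, gives
\[
f(Y+d)-f(Y)\leq\langle g_j,d_j\rangle+\tfrac{c-\delta}{2}\|d_j\|^2,
\]
which is \eqref{app:majorization} for block $j$. The case $m=1$ is trivial because $\ell_{\mathrm{ctrl},1}=0$. I expect the main obstacle to be the careful bookkeeping in the reference-factor and core curvature bounds: one must confirm that the maximum over columns, or slices, of the summed one-sided bounds really dominates the Hessian. It does, because the cross terms are block diagonal within each mode, so summing over modes and then maximizing is valid. A second point needing care is that the radius bound has to use the competitor argument rather than nonexpansiveness.
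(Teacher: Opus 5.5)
Your proposal is correct and follows the paper's proof essentially step for step. Comparison with the feasible competitor $Y_j$ gives $\|d_j\|_F\le 2\|g_j\|_F/c\le t$; your triangle-inequality form is equivalent to the paper's Cauchy--Schwarz reading of $\langle g_j,d_j\rangle+\frac c2\|d_j\|_F^2\le 0$. The quartic identity \eqref{app:quartic-hessian} and the fixed loss curvature then yield $\nabla^2_{jj}f\preceq(c-\delta)I$ on the segment, and Taylor's formula finishes the argument; your case-by-case bookkeeping just spells out what the paper compresses into one sentence.

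One wording point needs fixing. You only have a one-sided Hessian bound, and the balance curvature can be very negative, so $c-\delta$ need not be a Lipschitz constant of the block gradient. You should therefore conclude directly from the integral remainder $\int_0^1(1-\theta)\,d_j^\top\nabla^2_{jj}f(Y+\theta d)\,d_j\,d\theta$ rather than cite Proposition~\ref{prop:descentlemma}, whose stated hypothesis is a two-sided Lipschitz constant.
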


\begin{proof}
Choose a minimizing radius $t$ in \eqref{app:safe-first-core}. Comparison of the
projection objective at $Y_j^+$ with the feasible point $Y_j$ gives
\begin{equation}
 \langle g_j,d_j\rangle+\frac c2\|d_j\|_F^2\leq0,
 \qquad \|d_j\|_F\leq\frac{2\|g_j\|_F}{c}\leq t.
 \label{app:projection-comparison}
\end{equation}
Every relevant column or slice norm on the segment is at most its current norm
plus $t$. Formula \eqref{app:quartic-hessian} and the quadratic loss bound give
$\nabla_{jj}^2f\preceq(c-\delta)I$ throughout that segment.
The last factor is fixed, so membership in $U$ is preserved. Taylor's formula
gives the required inequality. Global projection comparison covers every support
choice, including ties and zero budgets.
\end{proof}

\begin{proposition}
\label{app:safe-uniform-normal}
Starting from $x^0\in \E\cap U$, one cyclic scan with the safe parameters above
uses one original projection per block, and every mixed state remains in
$K_{\Phi(x^0)}$. There is a finite constant $c_{\max}$ such that
$2\delta\leq c_j^k\leq c_{\max}$ for every block and iteration.
Writing $d_j^k=x_j^{k+1}-x_j^k$, the vectors
\begin{equation}
 \begin{aligned}
 n_j^{k+1}&=-\nabla_j f(y^{k,j-1})-c_j^kd_j^k,\\
 (p_{x^{k+1}})_j&=\nabla_j f(x^{k+1})-\nabla_j f(y^{k,j-1})-c_j^kd_j^k
 \end{aligned}
 \label{app:actual-normal}
\end{equation}
satisfy $n^{k+1}\in\widehat N_{\E}(x^{k+1})$ and
$p_{x^{k+1}}\in\widehat\partial\Phi(x^{k+1})\subseteq\partial\Phi(x^{k+1})$.
For $B$ update blocks, a finite $L_K$ exists such that
\begin{equation}
 \begin{aligned}
 \Phi(x^k)-\Phi(x^{k+1})&\geq\frac\delta2\|x^{k+1}-x^k\|^2,\\
 \|p_{x^{k+1}}\|&\leq(\sqrt B L_K+c_{\max})\|x^{k+1}-x^k\|.
 \end{aligned}
 \label{app:a1a2}
\end{equation}
\end{proposition}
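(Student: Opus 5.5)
The proof will be an induction over the cyclic scan that combines the two safe-parameter lemmas with the general descent and boundedness theorems. Fix $x^0\in\E\cap U$ and $M=\Phi(x^0)$. Suppose the mixed state $Y=y^{k,j-1}$ lies in $\E\cap U$ with $\Phi(Y)\le M$, and let $c=c_j^k=C_{\mathrm{safe}}(Y,j)$ as in Remark~\ref{rem:parameters}. The block update is a single original projection: $P_{E_j}$ from \eqref{eq:hardprojection} for first factors and the core, and $[\,\cdot\,]_+$ for the last factor.

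Lemma~\ref{app:safe-first-lemma} (for first factors and the core) and Lemma~\ref{app:safe-last-lemma} (for the last factor) then give $Y^+\in\E\cap U$ together with the majorization \eqref{app:majorization}. These are exactly the descent inequality \eqref{eq:descenttest} and the requirement $c\ge2\delta$ of (C3). The projection is a global block minimizer of \eqref{e12}, because with $F_j=\delta_{E_j}$ the block problem equals $\min_{u\in E_j}\frac c2\|u-(Y_j-g_j/c)\|^2$ up to a constant. The comparison argument in the proof of Theorem~\ref{thm:descent} therefore gives $\Phi(Y)-\Phi(Y^+)\ge\frac\delta2\|Y_j^+-Y_j\|^2$. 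Induction over blocks and sweeps keeps every mixed state in $K_M$, and summing over the $B$ blocks yields the first line of \eqref{app:a1a2}.

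Uniform boundedness of the parameters requires $c_j^k\le c_{\max}$. Theorem~\ref{thm:compactness} makes $K_M$ compact and contained in $U$, with the explicit bounds \eqref{app:last-bounds}--\eqref{app:tucker-bounds}. I will bound each ingredient of the safe formulas uniformly on $K_M$ rather than rely on local boundedness:
\begin{itemize}
\item $\|g_j\|$ is bounded, since $\nabla f$ is continuous on the compact set $K_M\subset U$;
\item $\ell_{\mathrm{loss},j}$ is bounded because the factor and core norms are bounded;
\item the control curvatures $\ell_{\mathrm{ctrl},j}(t)$ are bounded for the fixed radius $t=1\in\mathcal T_j$, and the $\min$ over $\mathcal T_j$ in \eqref{app:safe-first-core} is at most the value at $t=1$;
\item for the last factor I take $\vartheta=1/2$. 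Then $q_r\ge H_M^{-1}$ bounds both $\|(g_N)_r\|/(\vartheta\sqrt{q_r})$ and $U_{r\vartheta}$, because $a_{r\vartheta}\ge H_M^{-1}/4$ and $b_{r\vartheta}$ is bounded through $s_{Nr}\le H_MQ_M^2$.
\end{itemize}
Multiplying by $\beta$ gives $c_{\max}$.

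For the subgradient claims, global optimality of the projection gives $n_j^{k+1}=-g_j-c\,d_j^k\in\widehat N_{E_j}(x_j^{k+1})$: expanding $\|u-v\|^2\ge\|x_j^{k+1}-v\|^2$ for $u\in E_j$ leaves a quadratic remainder. A product of regular normals is a regular normal to the product set $\E$, so $n^{k+1}\in\widehat N_\E(x^{k+1})$. Adding $\nabla f(x^{k+1})$ via the smooth sum rule \eqref{eq:smoothsum} shows $p_{x^{k+1}}\in\widehat\partial\Phi(x^{k+1})$. Here $\Phi=f+\delta_\E$ near $x^{k+1}\in U$, since $U$ is open and $C$ is smooth there.

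The bound on $\|p_{x^{k+1}}\|$ follows as in Theorem~\ref{thm:subgradient}. Theorem~\ref{thm:boundedness} gives a Lipschitz constant $L_K$ of $\nabla f$ on $K_M$, and it applies because all mixed states lie in $K_M$ and $\|x^{k+1}-y^{k,j-1}\|\le\|x^{k+1}-x^k\|$. This yields $\sqrt B L_K+c_{\max}$.

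The main obstacle is the uniform upper bound $c_{\max}$. Its difficulty lies in the last-factor term $1/\sqrt{q_r}$ and the rational curvature $U_{r\vartheta}$, which blow up near the graph nullspace. The lower bound $q_r\ge H_M^{-1}$ from Theorem~\ref{thm:compactness} is exactly what controls them.
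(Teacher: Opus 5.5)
Your proposal is correct and follows essentially the same route as the paper. You obtain block decrease from the safe-parameter lemmas combined with global projection comparison, and a uniform bound $c_{\max}$ by evaluating the safe formulas at the fixed radius $t=1\in\mathcal T_j$ and at one fixed angle in $\Theta$, using $q_r\geq H_M^{-1}$ on the compact sublevel. You then get regular normals from the projection inequality and use a Lipschitz constant of $\nabla f$ on $K_M$.

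The differences are cosmetic. You take $L_K$ from Theorem~\ref{thm:boundedness}, which is legitimate here because that part uses only compactness of $K_M\subset U$ and local Lipschitz continuity of $\nabla f$ on $U$, not (C3); the paper instead rederives it from a Hessian bound on a compact neighborhood of $K_M$ inside $U$. You also combine blockwise regular normals by the product rule, where the paper sums the proximal-normal inequalities over $\E$ directly.
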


\begin{proof}
The majorization inequality and \eqref{app:projection-comparison} give block
decrease $\delta\|d_j^k\|^2/2$. Thus every mixed state remains in the initial
compact sublevel from Theorem~\ref{thm:compactness}. For a first or core block,
$1\in\mathcal T_j$ implies
$C_{\mathrm{safe}}(Y,j)\leq\beta C_j(1)$, uniformly bounded on that sublevel.
For the last factor, fix any $\vartheta_0\in\Theta$ and use
\[
 C_{\mathrm{safe}}(Y,N)\leq\beta\max_r C_{r\vartheta_0}.
\]
The positive lower bound on $q_r$, bounded variables, and bounded gradients make
this uniformly finite. No continuity of the minimizing radius or angle is needed.

Expanding the global projection comparison against an arbitrary $u_j\in E_j$
at the new block gives
\[
 \langle n_j^{k+1},u_j-x_j^{k+1}\rangle
       \leq\frac{c_j^k}{2}\|u_j-x_j^{k+1}\|^2.
\]
Later block updates do not alter $x_j^{k+1}$. Summing over the product set proves
the claimed Fr\'echet normal inequality, and smooth addition yields $p_{x^{k+1}}$.

To obtain a common gradient constant, choose a closed neighborhood of $K_M$
that is compact and contained in $U$. The Hessian of $f$ is bounded there.
For two points of $K_M$ that are sufficiently close, their segment lies in this
neighborhood, so the Hessian bound gives a Lipschitz estimate. For points with
larger separation, use twice the maximum gradient norm on $K_M$, divided by the
chosen separation. Thus $\nabla f$ is $L_K$-Lipschitz between any two points of
$K_M$. Since $\|x^{k+1}-y^{k,j-1}\|\leq\|x^{k+1}-x^k\|$, the product norm of
the gradient-difference terms in \eqref{app:actual-normal} is at most
$\sqrt B L_K\|x^{k+1}-x^k\|$. The remaining terms are bounded by
$c_{\max}\|x^{k+1}-x^k\|$. Summing block decreases proves
\eqref{app:a1a2}.
\end{proof}

\subsection{Complete reconstruction structure and original effective criticality}
\label{app:geometry}

\subsubsection{The full feasible reconstruction relation}

For either model, define
\begin{equation}
\begin{aligned}
 \mathcal R&=\{(x,y)\in \E\times \E:Z(y)=Z(x)\},\\
 E_f(x)&=\{y\in \E:Z(y)=Z(x)\},\qquad
 \V(x)=T_{E_f(x)}(x).
\end{aligned}
 \label{app:true-fiber}
\end{equation}
Here $T$ is the contingent tangent cone. \begin{proposition}[Complete feasible reconstruction structure]\label{prop:fullstructure}
The original feasible reconstruction relation is the finite union of the
polynomial support relations below. For Tucker, the invertible core--factor
transformations and their exact infinitesimal kernel are given by
Eqs.~(\ref{app:tucker-gl})--(\ref{app:tucker-gl-velocity}); feasibility is
decided by the original nonnegativity and sparsity constraints.
\end{proposition}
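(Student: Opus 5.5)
The plan is to decompose the feasible set $\E$ into its finitely many closed coordinate cones before touching the reconstruction. Each sparse block $E_i$ (and $E_G$) equals $\bigcup_{S}\R_+^{S}$, with $S$ ranging over supports satisfying $|S|\le s_i$. The last block is a single orthant. Hence $\E=\bigcup_{\mathbf S}\E_{\mathbf S}$, a finite union of polyhedral cones, each cut out by linear equalities (the off-support entries vanish) and linear inequalities (nonnegativity). It follows that
\[
\mathcal R=\bigcup_{\mathbf S,\mathbf S'}\{(x,y)\in\E_{\mathbf S}\times\E_{\mathbf S'}:Z(y)=Z(x)\}.
\]
Because $Z$ is a multilinear, hence polynomial, map in the entries of $(G,A)$ or $A$, every piece is described by finitely many polynomial equalities and linear inequalities. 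This already proves the finite-union polynomial description for both CP and Tucker, including zero columns, rank-deficient factors and singular cores, since nothing is divided and no rank condition is imposed. The fibers are the slices $E_f(x)=\{y:(x,y)\in\mathcal R\}$, and they inherit the same finite description for each fixed $x$.

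For the Tucker statement, I would then write out the group action explicitly. For $M_i\in GL(r_i)$, set
\[
(G,A_1,\dots,A_N)\mapsto\bigl(G\times_1M_1^{-1}\cdots\times_NM_N^{-1},\ A_1M_1,\dots,A_NM_N\bigr).
\]
Using the standard mode-product identity $G\times_iM\times_iA=G\times_i(AM)$, this preserves $Z(G,A)$ exactly, which I take to be Eq.~(\ref{app:tucker-gl}). Differentiating along $M_i=\exp(t\Xi_i)$ at $t=0$ gives the velocity
\[
\Bigl(-\textstyle\sum_i G\times_i\Xi_i,\ A_1\Xi_1,\dots,A_N\Xi_N\Bigr),
\]
which is Eq.~(\ref{app:tucker-gl-velocity}). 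For the ``exact infinitesimal kernel'' I would characterize the $\Xi$ producing zero velocity: $A_i\Xi_i=0$ for all $i$ and $\sum_iG\times_i\Xi_i=0$. I would also check that the image lies in $\ker DZ(G,A)$ by linearizing the identity $Z\circ\text{action}=Z$. Feasibility is not preserved by a general $M_i$. The transformed point lies in $\E$ exactly when the nonnegativity and sparsity constraints hold, and those constraints are merely intersected with the orbit; this is the sense of ``decided by the original constraints''. The CP case is the diagonal special case, covered by the diagonal scalings used in Theorem~\ref{app:scale-collapse}, together with column permutations.

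I expect the main obstacle to be stating precisely what ``exact infinitesimal kernel'' means at singular points. There the orbit map $\Xi\mapsto$ velocity has a nontrivial kernel, and the orbit is only part of the fiber, since $E_f(x)$ can contain non-orbit points. Examples are support changes, and directions available when some $A_i$ or a core slice is zero. I would therefore not claim that the fiber equals the orbit. I would claim only that (a) the fiber is the stated finite polynomial union, and (b) the group action gives exact fiber-preserving curves whose velocity space and kernel are as displayed. Feasible curves among these then supply elements of $\V(x)$ via the contingent-cone definition (\ref{eq:tangent}).
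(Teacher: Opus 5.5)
Your proposal is correct and follows essentially the same route as the paper. Like the paper, you enumerate the finitely many admissible support patterns in each block, impose nonnegativity and off-support zeros, and add the multilinear reconstruction equations; you then verify $\operatorname{GL}$ invariance by cancelling $Q_iQ_i^{-1}$, differentiate to obtain the velocity map with kernel $A_iB_i=0$, $\sum_i G\times_iB_i=0$, and treat feasibility as intersection with the original constraints. The paper's proof also records, in the same place, the sequential test for membership in $\V(x)$ and the inclusion $\V(x)\subseteq T_{\E}(x)\cap\ker dZ(x)$. Your closing remark about feasible orbit curves only touches on these points.
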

\begin{proof}
The relation has an explicit finite
polynomial description: enumerate all allowed support subsets in every original
block, impose nonnegativity and zero entries outside each support, and add
\begin{equation}
 \begin{aligned}
 &\sum_r\prod_i (a_{ir}(y))_{j_i}
   =\sum_r\prod_i (a_{ir}(x))_{j_i}&&\text{(CP)},\\
 &\sum_{\boldsymbol{k}}G_{\boldsymbol{k}}(y)
          \prod_i(A_i(y))_{j_ik_i}
   =\sum_{\boldsymbol{k}}G_{\boldsymbol{k}}(x)
          \prod_i(A_i(x))_{j_ik_i}&&\text{(Tucker)}
 \end{aligned}
 \label{app:fiber-polynomial}
\end{equation}
for every output multi-index. In particular, the exact cone membership condition is
\begin{equation}
 \begin{split}
 d\in \V(x)\quad\Longleftrightarrow\quad
 &\text{for all }\eta>0,\tau>0\text{ there are }0<t<\tau, y\in \E,\\
 &Z(y)=Z(x),\qquad\|y-x-td\|^2<\eta^2t^2.
 \end{split}
 \label{app:fiber-tangent}
\end{equation}
Indeed, a contingent sequence implies the displayed condition, and choosing
$\eta=\tau=1/\ell$ yields such a sequence in the reverse direction. It follows
directly that $\V(x)\subseteq T_{\E}(x)\cap\ker dZ(x)$. Definitions
\eqref{app:true-fiber}--\eqref{app:fiber-tangent} include all exact feasible
mixings and all zero-component and rank-changing branches.

For Tucker, every collection $Q_i\in\operatorname{GL}(r_i)$ gives the ambient
reconstruction-preserving action
\begin{equation}
 \Gamma_Q(x)=
 (G\times_1Q_1^{-1}\cdots\times_NQ_N^{-1},
          A_1Q_1,\ldots,A_NQ_N),
 \label{app:tucker-gl}
\end{equation}
where $(G\times_iM)_{k_1\cdots a\cdots k_N}
=\sum_bM_{ab}G_{k_1\cdots b\cdots k_N}$.
Substitution into \eqref{app:fiber-polynomial} cancels each
$Q_iQ_i^{-1}$ and proves exact reconstruction invariance. Its infinitesimal
map is
\begin{equation}
 (B_1,\ldots,B_N)\longmapsto
 \left(-\sum_iG\times_iB_i,\ A_1B_1,\ldots,A_NB_N\right).
 \label{app:tucker-gl-velocity}
\end{equation}
Its kernel is given exactly by $A_iB_i=0$ for every $i$ and
$\sum_iG\times_iB_i=0$. Thus its image dimension is
$\sum_i r_i^2$ minus the dimension of this kernel. Under the original constraints,
the exact feasibility test for \eqref{app:tucker-gl} is that all transformed
entries are nonnegative and their active supports satisfy the original budgets.
Every transformation passing this test belongs to \eqref{app:true-fiber}.
The finite relation also includes fiber points outside a particular invertible
gauge orbit.
\end{proof}

\subsubsection{True scales and the coordinate complement}

In CP, for arbitrary real $\gamma_{ir}$, $i<N$, the action
$\mathcal D_\gamma$ multiplies $a_{ir}$ by $e^{\gamma_{ir}}$ and $a_{Nr}$ by
$e^{-\sum_{i<N}\gamma_{ir}}$. Its velocity map $T_x$ has columns generated by
\begin{equation}
 d_{ir}(x)=(0,\ldots,a_{ir},\ldots,0,-a_{Nr})
 \quad(i<N),
 \label{app:cp-scale-generators}
\end{equation}
with these nonzero columns in component $r$. Its kernel consists exactly of
parameters satisfying $\gamma_{ir}a_{ir}=0$ and
$(\sum_{i<N}\gamma_{ir})a_{Nr}=0$ in every component.
In Tucker, the positive diagonal action multiplies factor column $a$ in mode $i$
by $e^{\gamma_{ia}}$ and core entry $G_{\boldsymbol{k}}$ by
$e^{-\sum_i\gamma_{ik_i}}$. Its velocity map is
\begin{equation}
 T_x\gamma=
 \left(\left\{-G_{\boldsymbol{k}}\sum_i\gamma_{ik_i}\right\}_{\boldsymbol{k}},
       \{\gamma_{ia}a_{ia}\}_{i,a}\right).
 \label{app:tucker-scale-generators}
\end{equation}
The exact real logarithmic stabilizer is
\begin{equation}
 \gamma_{ia}a_{ia}=0\ (i,a),\qquad
 G_{\boldsymbol{k}}\sum_i\gamma_{ik_i}=0\ (\boldsymbol{k}).
 \label{app:tucker-stabilizer}
\end{equation}
Therefore $\dim\operatorname{range}(T_x)=\sum_i r_i-\dim\ker T_x$.
These equations include zero columns and zero core entries. If every factor
column is nonzero, this dimension is $\sum_i r_i$.
Both positive diagonal actions preserve all signs, supports, original budgets and
reconstructions in both parameter directions.

Let $\Sscale(x)=\operatorname{range}(T_x)$ and let $D(x)$ be the differential of the
coordinate map $(b,h)$ in \eqref{eq:cpbalance} or
\eqref{eq:tuckerbalance}. Define
\begin{equation}
 \Heff(x)=\ker D(x),\qquad W(x)=\operatorname{range}(D(x)^T)=\Heff(x)^\perp.
 \label{eq:comparisonspaces}
\end{equation}

\begin{lemma}
\label{thm:geometry}
At every $x\in \E\cap U$ for CP and Tucker,
\begin{equation}
 \mathbb R^{\dim x}=\Sscale(x)\oplus \Heff(x),\qquad
 \nabla C(x)\in W(x).
 \label{eq:directsum}
\end{equation}
\end{lemma}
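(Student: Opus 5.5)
The second assertion in \eqref{eq:directsum} comes from the chain rule, and the direct sum reduces to two linear-algebra facts about the composite $D(x)T_x$ that can be checked component by component. The control \eqref{eq:control} depends on $x$ only through the coordinate map $(b,h)$ of \eqref{eq:cpbalance} or \eqref{eq:tuckerbalance} together with \eqref{eq:graphcoordinates}. Explicitly,
\[
C=\mu\sum b^2+\epsilon\sum_r(h_r+h_r^{-1}-2).
\]
On $U$ the chain rule therefore gives
\[
\nabla C(x)=D(x)^T w_C,\qquad w_C=\bigl(2\mu b,\ \epsilon(1-h^{-2})\bigr),
\]
so $\nabla C(x)\in\range(D(x)^T)=W(x)$. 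This uses only $q_r>0$, which makes $h$ smooth on $U$.

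For the direct sum it suffices to prove (a) $\Sscale(x)\cap\Heff(x)=\{0\}$ and (b) $\operatorname{rank}D(x)\le\dim\Sscale(x)$. Together with $\dim\Heff=\dim x-\operatorname{rank}D$, these give $\dim\Sscale+\dim\Heff=\dim x$ with trivial intersection. I would compute $D T_x\gamma$ from the scale formulas:
\[
\frac{d}{dt}s_{ir}=2\gamma_{ir}s_{ir},\qquad \frac{d}{dt}u_{ia}=-2\sum_{\boldsymbol k:k_i=a}G_{\boldsymbol k}^2\sum_j\gamma_{jk_j},\qquad \frac{d}{dt}h_r=-2\Gamma_rh_r .
\]
For CP, $\Gamma_r=\sum_{i<N}\gamma_{ir}$. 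For Tucker, $\Gamma_a=\gamma_{Na}$, and $h_a$ depends only on the last factor column, so $dh_a=2\gamma_{Na}h_a$ up to sign. Since $q_r>0$, every $a_{Nr}\neq0$ and $h_r>0$.

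For (a), suppose $DT_x\gamma=0$; I will show $T_x\gamma=0$. In CP, the $h$-equation forces $\Gamma_r=0$. The $b$-equations force $\gamma_{ir}s_{ir}=\gamma_{mr}s_{mr}=:\kappa_r$ for all $i<m$. If some first column $a_{i_0r}$ vanishes, then $\kappa_r=\gamma_{i_0r}s_{i_0r}=0$. Otherwise $\gamma_{ir}=\kappa_r/s_{ir}$, and $\Gamma_r=\kappa_r\sum_i s_{ir}^{-1}=0$ again gives $\kappa_r=0$. Hence $\gamma_{ir}a_{ir}=0$ for all $i$ and $\Gamma_ra_{Nr}=0$, i.e., $\gamma$ lies in the stabilizer and $T_x\gamma=0$.

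For Tucker, the $h$-equations give $\gamma_{Na}=0$. For the remaining modes I would pair the equations $db_{ia}=0$ with $\gamma_{ia}$ and sum over $i<N$ and $a$. This yields
\[
\sum_{i,a}\gamma_{ia}^2s_{ia}+\sum_{\boldsymbol k}G_{\boldsymbol k}^2\Bigl(\sum_i\gamma_{ik_i}\Bigr)^2=0,
\]
which forces exactly the stabilizer equations \eqref{app:tucker-stabilizer}.

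For (b), I would work with the transpose. Equivalently, I would show that $T_x^TD^Tw=0$ implies $D^Tw=0$, so that $\range D^T$ meets $\ker T_x^T=\Sscale^\perp$ trivially. This gives $\operatorname{rank}D=\operatorname{rank}(T_x^TD^T)\le\operatorname{rank}T_x=\dim\Sscale$. The rows of $D$ are explicit. In CP, $\nabla b_{ir}$ has entries $a_{ir}$ in column $(i,r)$ and $-a_{mr}$ in column $(m,r)$, and $\nabla h_r$ is the last-column gradient already written in \eqref{app:cp-control-grad}, which is nonzero with nonzero radial pairing $\langle\nabla h_r,a_{Nr}\rangle=-2h_r$. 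Pairing $D^Tw$ with each generator $d_{ir}$ of \eqref{app:cp-scale-generators} produces a small linear system in $w$ per component, with coefficients $s_{ir}$, $s_{mr}$ and $h_r$. I would show that this system forces every row combination with a nonzero gradient to carry zero weight. Rows with zero gradient, namely $\nabla b_{ir}$ when $a_{ir}=a_{mr}=0$, contribute nothing to $D^Tw$ in any case.

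The same computation applies to Tucker. There $\nabla b_{ia}$ has factor part $a_{ia}$ and core part $-G$ restricted to slice $a$ of mode $i$, and the pairings with $T_x$ are $w_{ia}s_{ia}$ together with cross terms $G_{\boldsymbol k}^2$ linking modes.

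I expect (b) for Tucker with zero columns and zero core entries to be the main obstacle, because the core couples all modes and some gradient rows degenerate. My first attempt would use the same quadratic-form trick as in (a): pair the $T_x^T$-equations with $w$ and show that the resulting expression equals $\|\text{factor part of }D^Tw\|^2+\|\text{core part}\|^2$ up to positive weights, so that its vanishing gives $D^Tw=0$. If that identity fails, I would fall back on a support-wise count. On each support pattern I would match the nonvanishing rows of $D$ with the nontrivial scale parameters and compare ranks directly.
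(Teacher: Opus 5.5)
\textbf{There is a gap in part (b).} Your chain-rule proof of $\nabla C\in W$ and your part (a) match the paper's argument. The CP computation with the common value $\kappa_r$, and the Tucker identity $\sum s_{ia}\gamma_{ia}^2+\sum_{\boldsymbol k}G_{\boldsymbol k}^2(\sum_{i<N}\gamma_{ik_i})^2=0$, are exactly how the paper shows that $D$ is injective on $\Sscale$. (The CP sign should be $dh_r=+2\Gamma_rh_r$, since the last column scales by $e^{-\Gamma_r}$; this is harmless.) Part (b), however, is only announced, and it is the half that gives $\dim\Sscale+\dim\Heff\geq\dim x$.

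Your Tucker plan fails as stated. Pairing $T_x^TD^Tw=0$ with $w$, identifying $w_h$ with $\gamma_N$, gives
\[
\Bigl\|\sum_{i<N,a}w_{ia}\nabla b_{ia}\Bigr\|^2+\sum_{\boldsymbol k}G_{\boldsymbol k}^2\,w_{h,k_N}\sum_{i<N}w_{ik_i}-2\sum_a h_aw_{h,a}^2 .
\]
This is indefinite (it is negative when $w_F=0$), so it is not a weighted $\|D^Tw\|^2$. Your CP description is also inaccurate. When two or more non-reference first columns vanish, their rows $\nabla b_{ir}$ all equal $-a_{mr}$ in block $m$. The system then forces only the sum of their weights to vanish, not each weight. $D^Tw=0$ still holds, but not for the reason you give.

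\textbf{The missing observation.} This is also what underlies the paper's count: the balance gradients are themselves scale velocities. In CP, $\nabla b_{ir}=d_{ir}-d_{mr}$. In Tucker, $\nabla b_{ia}=T_xe_{(i,a)}$ for $i<N$. Each $\nabla h_r$ is supported on last column $r$, with $\langle\nabla h_r,a_{Nr}\rangle=-2h_r\neq0$.

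With this, Tucker closes in two steps.
\begin{enumerate}
\item Pair $D^Tw$ only with the first-mode generators, which have no last-factor part. This gives $Qw_F=0$ with $Q$ from \eqref{app:tucker-gram}. Since $w_F^TQw_F=\|\sum_{i<N,a}w_{ia}\nabla b_{ia}\|^2$, the balance part of $D^Tw$ vanishes.
\item Pair with the last-mode generators. This gives $-2h_aw_{h,a}=0$, so $w_h=0$ and $D^Tw=0$.
\end{enumerate}
Zero columns and zero core entries cause no difficulty.

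In CP, the per-component system is
\[
w_is_{ir}+2w_hh_r=0\quad(i<m),\qquad -s_{mr}\sum_{i<m}w_i+2w_hh_r=0 .
\]
It forces $w_h=0$ in every support case. Then $w_ia_{ir}=0$ and $(\sum_iw_i)a_{mr}=0$, so $D^Tw=0$.

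\textbf{How the paper does it.} The paper uses the same fact as a dimension count rather than your transpose argument. First, $\operatorname{rank}D=\operatorname{rank}Q+r_N$, because $Q$ is the Gram matrix of the balance gradients and these are orthogonal to the independent $h$-gradients. Second, your identity from (a) shows $\ker T_x=\{\gamma_N=0,\ Q\gamma_F=0\}$, so rank--nullity gives $\dim\Sscale=\operatorname{rank}Q+r_N$. CP is handled by the analogous count over the number of nonzero first columns. Either completion closes your proof; without one, (b) remains unproved.
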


\begin{proof}
For CP, work componentwise and put $m=N-1$. Since $q_r>0$, the last column
$a_{Nr}$ is nonzero. The derivatives satisfy
\[
 db_{ir}(d)=\langle a_{ir},d_{ir}\rangle-
             \langle a_{mr},d_{mr}\rangle,
 \qquad dh_r(a_{Nr})=-2h_r\ne0.
\]
If all $m$ first columns are nonzero, a scale velocity has first-column entries
$t_ia_{ir}$ and last entry $-(\sum_i t_i)a_{Nr}$. Its coordinate derivatives
vanish precisely when $s_{ir}t_i=s_{mr}t_m$ for all $i<m$ and $\sum_i t_i=0$.
Their common value $a$ obeys $a\sum_i1/s_{ir}=0$, so all $t_i=0$.
The scale dimension and coordinate rank are both $m$.

If exactly $p<m$ first columns are nonzero, a generator from a zero first
column supplies the pure last-column direction. Hence $\Sscale$ consists of
independent radial variations of the $p$ nonzero first columns and of the last
column, and has dimension $p+1$. If all first balance derivatives vanish, all
first-column pairings have a common value. A zero column makes that value zero,
so every surviving first radial coefficient vanishes. The $h$ derivative then
makes the last radial coefficient zero. The balance derivative rank is $p$:
the $p$ surviving pairings can vary arbitrarily, and their differences determine
them because at least one pairing is zero. The $h$ derivative adds one rank.
Thus $D|_{\Sscale}$ is injective and $\dim \Sscale=\operatorname{rank}D$ also in this case.

For Tucker, index the first-factor column scales by
$\mathcal I=\{(i,a):i<N\}$. Let $B_{\boldsymbol{k},(i,a)}=1_{k_i=a}$,
$B_{N,\boldsymbol{k},a}=1_{k_N=a}$, and let $g$ collect the core entries.
The Gram matrix of the first balance gradients is
\begin{equation}
 Q=\operatorname{diag}(\{s_{ia}\}_{\mathcal I})+
              B^T\operatorname{diag}(g^2)B.
 \label{app:tucker-gram}
\end{equation}
These gradients are orthogonal to all last-column $h$ gradients, and the latter
are independent since $dh_a(a_{Na})=-2h_a\ne0$. Thus
$\operatorname{rank}D=\operatorname{rank}Q+r_N$.
For $\gamma=(\gamma_F,\gamma_N)$,
\[
 D_bT_x\gamma=Q\gamma_F+
       B^T\operatorname{diag}(g^2)B_N\gamma_N,\qquad
 D_hT_x\gamma=-2\operatorname{diag}(h)\gamma_N.
\]
If $DT_x\gamma=0$, then $\gamma_N=0$ and $Q\gamma_F=0$.
The identity
\[
 \gamma_F^TQ\gamma_F=
 \sum_{(i,a)\in\mathcal I}s_{ia}\gamma_{ia}^2+
 \sum_{\boldsymbol{k}}G_{\boldsymbol{k}}^2
            \left(\sum_{i<N}\gamma_{ik_i}\right)^2
\]
shows that these conditions are exactly $T_x\gamma=0$.
Moreover, the last factor columns are nonzero, so the kernel of $T_x$ already
has $\gamma_N=0$. Rank-nullity now gives
$\dim \Sscale=\operatorname{rank}Q+r_N=\operatorname{rank}D$.
Hence $D|_{\Sscale}$ is again injective with the full image of $D$.

In either model, every ambient direction has a unique scale velocity with the
same coordinate derivatives. Subtracting it gives a unique element of $\Heff$,
proving the direct sum. Finally, the chain rule gives
\begin{equation}
 \nabla C=2\mu\sum b\nabla b+
            \epsilon\sum_r(1-h_r^{-2})\nabla h_r\in W.
 \label{eq:annihilation}
\end{equation}
\end{proof}

Let $\Pi_x$ be the linear projection onto $\Heff(x)$ along $\Sscale(x)$ furnished by
Lemma~\ref{thm:geometry}.

\begin{proposition}
\label{app:fiber-preservation}
At every $x\in \E\cap U$,
\begin{equation}
 \begin{aligned}
 T_{\E}(x)+\Sscale(x)&=T_{\E}(x),&\V(x)+\Sscale(x)&=\V(x),\\
 \Pi_xT_{\E}(x)&=T_{\E}(x)\cap \Heff(x),&
 \Pi_x\V(x)&=\V(x)\cap \Heff(x),\\
 d\in \V(x)&\Longleftrightarrow\Pi_xd\in \V(x).
 \end{aligned}
 \label{app:cone-identities}
\end{equation}
Consequently, for
\begin{equation}
 \Qeff(x)=(T_{\E}(x)\cap \Heff(x))\setminus \V(x),
 \label{eq:effective}
\end{equation}
we have
\begin{equation}
 \Pi_x(T_{\E}(x)\setminus \V(x))=\Qeff(x),\qquad
 dZ(x)(\Qeff(x))\setminus\{0\}
     =dZ(x)(T_{\E}(x))\setminus\{0\}.
 \label{app:image-preservation}
\end{equation}
For every right-differentiable feasible curve $y(t)$ with $y(0)=x$, there is a
feasible curve $z(t)$ such that
\begin{equation}
 z(0)=x,\qquad z'(0+)=\Pi_xy'(0+),\qquad Z(z(t))=Z(y(t))
 \label{app:curve-preservation}
\end{equation}
for every sufficiently small $t\geq0$.
\end{proposition}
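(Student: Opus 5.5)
The whole proposition rests on one mechanism: the positive diagonal scale action $\mathcal D_\gamma$ is a linear group action on each coordinate cone. It preserves $\E$, every support, and the reconstruction $Z$, and it can be run in both parameter directions. I will first establish the invariance identities $T_{\E}(x)+\Sscale(x)=T_{\E}(x)$ and $\V(x)+\Sscale(x)=\V(x)$, and then deduce the projection statements from the direct sum of Lemma~\ref{thm:geometry}.

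For $d\in T_{\E}(x)$ and $s=T_x\gamma\in\Sscale(x)$, take a contingent sequence $x^\ell\in\E$ with $(x^\ell-x)/t_\ell\to d$. Set $y^\ell=\mathcal D_{t_\ell\gamma}(x^\ell)$. Since $\mathcal D_{t_\ell\gamma}$ acts by multiplication with $e^{t_\ell\gamma}$ factors, it preserves $\E$. Using $\mathcal D_{t\gamma}(x^\ell)-\mathcal D_{t\gamma}(x)=\mathcal D_{t\gamma}(x^\ell-x)$ (linearity in each block) and $\mathcal D_{t\gamma}(x)=x+tT_x\gamma+O(t^2)$, we get $(y^\ell-x)/t_\ell\to d+s$. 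If moreover $x^\ell\in E_f(x)$, then $Z(y^\ell)=Z(x^\ell)=Z(x)$, so the same construction gives $\V(x)+\Sscale(x)\subseteq\V(x)$. The reverse inclusions are trivial since $0\in\Sscale$.

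Because $\Sscale$ is a linear space, these identities also give $T_{\E}-\Sscale=T_{\E}$. Hence for $d\in T_{\E}(x)$ we have $\Pi_xd=d-s_d\in T_{\E}(x)\cap\Heff(x)$. Conversely, any element of $T_{\E}(x)\cap\Heff(x)$ is fixed by $\Pi_x$, which gives $\Pi_xT_{\E}=T_{\E}\cap\Heff$. The same argument applies to $\V$. The equivalence $d\in\V\Leftrightarrow\Pi_xd\in\V$ follows because $d$ and $\Pi_xd$ differ by an element of $\Sscale$.

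Consequently $\Pi_x$ maps $T_{\E}\setminus\V$ into $(T_{\E}\cap\Heff)\setminus\V=\Qeff$, and it is onto because it fixes $\Qeff$. For the image statement, differentiating $Z(\mathcal D_{t\gamma}x)=Z(x)$ gives $\Sscale\subseteq\ker dZ(x)$, so $dZ(x)\Pi_xd=dZ(x)d$. A nonzero value $dZ(x)d$ with $d\in T_{\E}$ forces $d\notin\V$, since $\V\subseteq\ker dZ$. Therefore $\Pi_xd\in\Qeff$ has the same nonzero image, which gives one inclusion. The other inclusion holds because $\Qeff\subseteq T_{\E}$.

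For the curve statement, write $y'(0+)=\Pi_xy'(0+)+T_x\gamma$ and set $z(t)=\mathcal D_{-t\gamma}(y(t))$. This curve is feasible and satisfies $Z(z(t))=Z(y(t))$ for all $t$. Its right derivative at $0$ is $y'(0+)-T_x\gamma$, by the product rule applied blockwise, since $\mathcal D$ is smooth in $(t,y)$ and $y(0)=x$.

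The main obstacle I expect is making the contingent-sequence argument rigorous at singular points, where columns or core entries vanish and $T_x$ has a kernel. There I must check that the expansion $\mathcal D_{t\gamma}(x^\ell)=x^\ell+t\,T_{x^\ell}\gamma+O(t^2\|x^\ell\|)$ holds with $T_{x^\ell}\gamma\to T_x\gamma$. I will handle this by working with the explicit diagonal multipliers rather than with a chart, so that continuity in the base point is immediate and no division by vanishing entries occurs.
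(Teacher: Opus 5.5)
Your proposal is correct and follows the paper's proof essentially step for step. Both arguments transport contingent sequences by the two-sided positive diagonal scale action to obtain the lineality identities, and both handle the curve claim by setting $z(t)=\mathcal D_{-t\gamma}(y(t))$. The projection, image and curve statements then follow from the direct sum $\Sscale\oplus\Heff$ together with $\Sscale(x)\subseteq\ker dZ(x)$ and $\V(x)\subseteq\ker dZ(x)$.

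The singular-point obstacle you anticipate does not arise. Linearity of $\mathcal D_{t\gamma}$ in the variables, which you already use, gives $(y^\ell-x)/t_\ell=\mathcal D_{t_\ell\gamma}\big((x^\ell-x)/t_\ell\big)+(\mathcal D_{t_\ell\gamma}x-x)/t_\ell\to d+T_x\gamma$ with no dependence on nonvanishing entries.
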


\begin{proof}
If $y_\ell=x+t_\ell d+o(t_\ell)$ and $s=T_x\gamma$, expand the finitely many
coordinate exponentials in the true scale action to obtain
\[
 \mathcal D_{t_\ell\gamma}y_\ell
     =x+t_\ell(d+s)+o(t_\ell).
\]
The action preserves both original feasibility and exact reconstruction. Applying
it to feasible tangent sequences or exact-fiber tangent sequences, and then
using $-\gamma$, proves both lineality identities. Since
$d-\Pi_xd\in \Sscale$, it also proves the remaining cone identities. Each scale
velocity is killed by $dZ$. Nonzero reconstruction differentials cannot arise
from $\V\subseteq\ker dZ$, giving \eqref{app:image-preservation}.
For a curve with $d=y'(0+)$, choose $\gamma$ with
$T_x\gamma=d-\Pi_xd$ and set $z(t)=\mathcal D_{-t\gamma}y(t)$.
The same expansion proves its derivative formula. Reconstruction equality holds
for each $t$, which proves \eqref{app:curve-preservation} also when the first
reconstruction derivative is zero.
\end{proof}

\subsubsection{Limiting normals and attained residuals}

After flattening all original entries, let $\mathcal F$ be the downward-closed
family of admissible coordinate supports, and write
$I(x)=\{j:x_j>0\}$. For $T\in\mathcal F$, define
\begin{equation}
 \begin{aligned}
 A(T)&=\{j\notin T:T\cup\{j\}\in\mathcal F\},\\
 \mathcal N(T)&=\{n:n_j=0\ (j\in T),\ n_j\leq0\ (j\in A(T)),
                   \ n_j\in\mathbb R\ (j\notin T\cup A(T))\}.
 \end{aligned}
 \label{app:support-normal-cone}
\end{equation}

\begin{lemma}
\label{app:true-normal-formula}
For every $x\in \E$,
\begin{equation}
 \widehat N_{\E}(x)=\mathcal N(I(x)),\qquad
 N_{\E}(x)=\bigcup_{\substack{T\in\mathcal F\\I(x)\subseteq T}}\mathcal N(T),
 \qquad \partial J(x)=\nabla H(x)+N_{\E}(x).
 \label{app:normal-formula}
\end{equation}
For every positive-homogeneous comparison set $Q$, possibly empty, the infimum
\begin{equation}
 \inf_{g\in\partial J(x)}
    \sup_{d\in Q,\ \|d\|=1}|\langle g,d\rangle|
 \label{app:attained-seminorm}
\end{equation}
is attained. Its value is zero precisely when an original limiting subgradient
annihilates $Q$.
\end{lemma}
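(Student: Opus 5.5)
The plan splits into two parts: the normal-cone formula, which I reduce to a statement about a finite union of coordinate cones, and the attainment claim, which I reduce to compactness of the set of normals involved.

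\textbf{Structure of $\E$.} After flattening, $\E$ is a finite union of closed coordinate cones $K_S=\{x\geq0:\ x_j=0\ (j\notin S)\}$, one for each $S\in\mathcal F$. Since $\mathcal F$ is downward closed, near a point $x\in\E$ the set $\E$ coincides with the union of those $K_T$ with $T\in\mathcal F$ and $I(x)\subseteq T$. Consequently the tangent cone is
\[
T_{\E}(x)=\bigcup_{T\supseteq I(x)}\{d:\ d_j\geq0\ (j\in T\setminus I(x)),\ d_j=0\ (j\notin T)\},
\]
and near $x$ the set $\E-x$ agrees with this cone. Hence $\widehat N_{\E}(x)$ is the polar of $T_{\E}(x)$.

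\textbf{Regular normal cone.} Polarity is computed coordinatewise. For $j\in I(x)$ the coordinate is free in both signs, which forces $n_j=0$. For $j\in A(I(x))$ the direction $e_j$ is tangent, which forces $n_j\leq0$. For any other $j$, no admissible support reachable from $I(x)$ contains $j$, so $d_j=0$ on the whole tangent cone and $n_j$ is free. Sufficiency follows because each $d$ in a branch $T\supseteq I(x)$ has nonnegative entries only on $T\setminus I(x)$. Each such $j\in T\setminus I(x)$ lies in $A(I(x))$, since $I(x)\cup\{j\}\subseteq T$ and $\mathcal F$ is downward closed. This gives $\widehat N_{\E}(x)=\mathcal N(I(x))$.

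\textbf{Limiting normal cone.} Take $x^\ell\to x$ in $\E$. For large $\ell$ we have $I(x)\subseteq I(x^\ell)\in\mathcal F$. The cones $\mathcal N(T)$ are closed and there are finitely many supports, so passing to a subsequence with constant support $T$ shows that every limiting normal lies in $\mathcal N(T)$ for some $T\supseteq I(x)$.

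Conversely, given $T\in\mathcal F$ with $T\supseteq I(x)$, set $x^\ell=x+\ell^{-1}\mathbf 1_{T\setminus I(x)}$. Then $x^\ell\in\E$ with $I(x^\ell)=T$, so $\mathcal N(T)=\widehat N_{\E}(x^\ell)\subseteq N_{\E}(x)$. The formula $\partial J=\nabla H+N_{\E}$ is Eq.~(\ref{eq:sumrule}) of Proposition~\ref{prop:subdifferential-calculus}, with $H$ the smooth part.

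\textbf{Attainment.} Define $\psi(g)=\sup_{d\in Q,\|d\|=1}|\langle g,d\rangle|$. It is a seminorm, hence continuous, and $\psi\equiv0$ when $Q$ is empty. By the formula above, $\partial J(x)$ is a finite union of closed polyhedral sets $\nabla H(x)+\mathcal N(T)$, so it suffices to show attainment on each piece.

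On one piece, $\psi(\nabla H(x)+n)$ is a seminorm evaluated at an affine function of $n$. The main obstacle is that $\mathcal N(T)$ is unbounded, so compactness is not immediate. I quotient by the linear space $S=\overline{\operatorname{span}}\,Q$. Because $\psi$ depends only on the orthogonal projection onto $S$, where it is a genuine norm (equivalent to the norm of the projection), the problem becomes minimizing a norm over the projected set $P_S(\nabla H(x)+\mathcal N(T))$. This projected set is the linear image of a polyhedral cone, hence again polyhedral and closed. Minimizing a continuous coercive function over a nonempty closed set in finite dimensions is attained. Taking the minimum over the finitely many pieces gives attainment on $\partial J(x)$.

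\textbf{Zero value.} The value is zero exactly when $\psi(g)=0$ for some $g\in\partial J(x)$, that is, when $\langle g,d\rangle=0$ for every $d\in Q$. This is the stated characterization, and the empty case is immediate since then $\psi\equiv0$.
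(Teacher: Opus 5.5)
Your proposal is correct and follows essentially the same route as the paper: the local coordinate-cone structure of $\E$ gives $\widehat N_{\E}(x)=\mathcal N(I(x))$, constant-support subsequences together with the points $x+\ell^{-1}\mathbf 1_{T\setminus I(x)}$ give the limiting cone, and the smooth sum rule gives $\partial J$; attainment then follows by projecting onto $\operatorname{span}Q$, where the seminorm is a norm, over closed linear images of finitely generated cones. The differences are cosmetic: you obtain $\widehat N_{\E}(x)$ as the polar of the local tangent cone and quote closedness of linear images of polyhedral cones, whereas the paper tests the Fr\'echet inequality directly and proves that closedness by a Carath\'eodory-type reduction.
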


\begin{proof}
At a positive coordinate, two-sided small feasible perturbations force a
Fr\'echet normal component to be zero. At a singly activatable zero coordinate,
a small positive perturbation forces it to be nonpositive. Conversely, near a
point of support $T$, all original positive entries remain positive. Any newly
positive coordinate is in $A(T)$ by downward closure. A vector in
$\mathcal N(T)$ therefore has a nonpositive pairing with every nearby feasible
displacement. This proves the regular normal formula.

For a limiting normal sequence approaching $x$, take a subsequence with constant
support $T$. Necessarily $I(x)\subseteq T$, and the normal limit belongs to
the closed cone $\mathcal N(T)$. Conversely, for any such $T$ and
$n\in\mathcal N(T)$, the feasible points
$x+\ell^{-1}\sum_{j\in T\setminus I(x)}e_j$ have support $T$ and have the
constant regular normal $n$. This proves the limiting formula. Smooth addition
gives the formula for $\partial J$. All defining sequences have values
$H(x^\ell)\to H(x)$, so these are the function-value-compatible limiting
subdifferentials.

Every $\mathcal N(T)$ is finitely generated by signed coordinate vectors.
Every linear image of such a cone is closed. To see this directly, a nonnegative
combination of its generators can be reduced to a linearly independent subset:
if its positive generators are dependent, take a nonzero relation with at least
one positive coefficient, subtract the largest possible multiple preserving all
nonnegative coefficients, and delete a coefficient made zero. Repeat finitely
many times. For a convergent sequence of represented vectors, pass to a fixed
independent subset; the coefficients then converge by the inverse Gram formula.
The limit remains in the cone.

Put $M=\operatorname{span}Q$. The set $P_M\partial J(x)$ is a finite union of
translates of closed finitely generated cones, hence closed and nonempty.
The displayed supremum depends only on $P_Mg$ and is a continuous seminorm with
Lipschitz constant at most one. On $M$ it is a norm: a vector on which it vanishes
is orthogonal to $Q$ and hence to its own space $M$. If $M\ne\{0\}$, compactness
of its Euclidean unit sphere gives a positive lower norm bound on that sphere.
A minimizing sequence in $P_M\partial J(x)$ is therefore bounded, and closedness
gives an attained minimum. A preimage of that minimum is the required original
subgradient. If $M=\{0\}$, every subgradient attains zero. Attainment proves
the final equivalence.
\end{proof}

An attaining certificate for
$\operatorname{dist}(\partial J(x),W(x))$ can be found using finite linear
algebra. For each incident $T$, take the generators $-e_j$ for $j\in A(T)$
and $\pm e_j$ for $j\notin T\cup A(T)$, project them orthogonally onto $\Heff$,
and discard zero projections. Denote the resulting columns by $u_\ell$, and put
$a=P_{\Heff}\nabla H(x)$. Enumerate every independent subset $K$ of these
columns, including the empty subset, and solve
\begin{equation}
 (U_K^TU_K)c=-U_K^Ta.
 \label{app:normal-enumeration}
\end{equation}
Retain a candidate when $c\geq0$ and
$\langle u_\ell,a+U_Kc\rangle\geq0$ for every generator. For the empty subset,
use $a$ itself and impose these inequalities. At a minimum-norm point of
$a+\operatorname{cone}\{u_\ell\}$, these inequalities follow by increasing
individual coefficients. Each positive coefficient has zero pairing, by also
decreasing that coefficient. Reduction to independent generators therefore
produces a retained candidate solving \eqref{app:normal-enumeration}.
Conversely, for every retained $y=a+U_Kc$ and every point
$z=a+\sum_\ell d_\ell u_\ell$ of this translated cone,
$\langle y,z-y\rangle\geq0$, so $\|z\|\geq\|y\|$.
Taking the smallest norm over all incident $T$, and lifting the coefficients to
their original signed coordinate generators, returns an attaining vector
in $\partial J(x)$.

\begin{proposition}
\label{app:effective-certificate}
For $\Qeff$ in \eqref{eq:effective}, define
\begin{equation}
 r_{\mathrm{eff}}(x)=\inf_{g\in\partial J(x)}
     \sup_{d\in \Qeff(x),\ \|d\|=1}|\langle g,d\rangle|.
 \label{eq:residual}
\end{equation}
Then the infimum is attained and
\begin{equation}
 0\leq r_{\mathrm{eff}}(x)\leq
     \operatorname{dist}(\partial J(x),W(x)).
 \label{app:residual-comparison}
\end{equation}
If $w\in\partial\Phi(x)$, then
$v=w-\nabla C(x)\in\partial J(x)$ and
\begin{equation}
 r_{\mathrm{eff}}(x)\leq\operatorname{dist}(\partial J(x),W(x))\leq\|w\|.
 \label{eq:transfer}
\end{equation}
At any $\Phi$-critical point $x^*$,
$-\nabla C(x^*)\in\partial J(x^*)\cap W(x^*)$ and annihilates every direction
in $\Heff(x^*)$, in particular every direction in $\Qeff(x^*)$.
\end{proposition}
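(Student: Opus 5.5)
The plan is to reduce everything to three earlier results: Lemma~\ref{app:true-normal-formula} for attainment, Lemma~\ref{thm:geometry} for the inclusion $\nabla C(x)\in W(x)$, and the smooth sum rule in Proposition~\ref{prop:subdifferential-calculus}.

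\emph{Attainment.} I first check that $\Qeff(x)$ is positive homogeneous. The sets $T_{\E}(x)$ and $\V(x)$ are cones, and $\Heff(x)$ is a linear space. Hence $T_{\E}(x)\cap\Heff(x)$ is a cone, and removing the cone $\V(x)$ from it preserves invariance under positive scaling: if $d\notin\V(x)$ and $t>0$, then $td\notin\V(x)$. Lemma~\ref{app:true-normal-formula} then applies with $Q=\Qeff(x)$, including the empty case. It gives an attaining $g\in\partial J(x)$ in \eqref{eq:residual}, and nonnegativity is immediate.

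\emph{The bound $r_{\mathrm{eff}}(x)\le\dist(\partial J(x),W(x))$.} Fix any $g\in\partial J(x)$ and $z\in W(x)$. Every unit $d\in\Qeff(x)$ lies in $\Heff(x)=W(x)^\perp$, so $\langle z,d\rangle=0$. Therefore $|\langle g,d\rangle|=|\langle g-z,d\rangle|\le\|g-z\|$. Taking the supremum over $d$ and then the infimum over $g$ and $z$ gives \eqref{app:residual-comparison}. If the supremum is empty it is zero by convention, so the bound still holds.

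\emph{Transfer from $\Phi$ to $J$.} Let $w\in\partial\Phi(x)$. Since $x\in\dom\Phi$, we have $x\in\E\cap U$, where $C$ is $C^1$ on the open set $U$. The smooth addition rule \eqref{eq:smoothsum} gives $\partial\Phi(x)=\partial J(x)+\nabla C(x)$, so $v=w-\nabla C(x)\in\partial J(x)$. Lemma~\ref{thm:geometry} gives $z=-\nabla C(x)\in W(x)$. Then $\dist(\partial J(x),W(x))\le\|v-z\|=\|w\|$, which combined with \eqref{app:residual-comparison} yields \eqref{eq:transfer}.

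\emph{Critical points.} At a $\Phi$-critical point $x^*$, apply the transfer step with $w=0$. This gives $-\nabla C(x^*)\in\partial J(x^*)$, and Lemma~\ref{thm:geometry} gives $-\nabla C(x^*)\in W(x^*)=\Heff(x^*)^\perp$. So this vector annihilates $\Heff(x^*)$, and in particular $\Qeff(x^*)\subseteq\Heff(x^*)$.

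\emph{Main obstacle.} The only delicate point is justifying the sum rule for the extended-valued $C$. This requires working locally in the open set $U$, where $\Phi$ and $J+C$ agree with a $C^1$ perturbation of $J$. The limiting sequences must also stay in $U$ with compatible values. This follows because $C$ is continuous on $U$, so $J(x^\ell)\to J(x)$ if and only if $\Phi(x^\ell)\to\Phi(x)$.
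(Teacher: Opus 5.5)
Your proposal is correct and matches the paper's proof step for step. Attainment comes from positive homogeneity of $\Qeff$ together with Lemma~\ref{app:true-normal-formula}. The comparison bound uses $\Qeff(x)\subseteq\Heff(x)=W(x)^\perp$. The transfer and critical-point certificate follow from smooth addition with $\nabla C(x)\in W(x)$ and the choice $z=-\nabla C(x)$. You also supply two details the paper leaves implicit: why removing the cone $\V(x)$ preserves positive homogeneity, and why the sum rule localizes in the open set $U$, where continuity of $C$ makes $J$- and $\Phi$-value compatibility equivalent.
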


\begin{proof}
The set $\Qeff$ is positive homogeneous. Lemma~\ref{app:true-normal-formula}
proves attainment. Every comparison direction lies in $\Heff$, so for $z\in W$
and unit $d\in \Qeff$,
$|\langle g,d\rangle|=|\langle g-z,d\rangle|\leq\|g-z\|$.
Infimizing gives \eqref{app:residual-comparison}. Smooth addition and
\eqref{eq:annihilation} give the certificate by choosing $z=-\nabla C$.
At $x^*$ use $w=0$ directly.
\end{proof}

\subsection{Convergence and effective criticality of the graph models}
The preceding constructions now verify the control conditions of the general
CBPL theory for the original matrix and tensor models.
\begin{theorem}[Original-variable convergence with graph noncollapse]\label{thm:global}
For either model in Eqs.~(\ref{eq:cpmodel})--(\ref{eq:tuckermodel}), any
original feasible input admits the finite initialization above. With the
direct safe parameters, every block uses one original-feasible projection,
and all original variables have finite total length:
\begin{equation}
\sum_{k=0}^\infty\|x^{k+1}-x^k\|<\infty.\label{eq:finitelength}
\end{equation}
The whole sequence converges to a finite $x^*\in\E\cap U$, with
\begin{equation}
J(x^k)\to J(x^*),\qquad r_{\mathrm{eff}}(x^k)\to0,\qquad
-\nabla C(x^*)\in\partial J(x^*)\cap W(x^*).\label{eq:mainlimit}
\end{equation}
The same original subgradient annihilates every effective comparison
direction at $x^*$. Moreover, all iterates and the limit satisfy
$\|LA_N\|_F\geq\nu\sqrt{r_{\mathrm{last}}/H_{\Phi(x^0)}}>0$.
All intermediate block updates also have finite total length.
\end{theorem}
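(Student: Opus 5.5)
The plan is to verify conditions (C1)--(C4) for the explicit graph control \eqref{eq:control} and then invoke the general results: Theorems~\ref{thm:descent}--\ref{thm:controlled-global} and Theorem~\ref{thm:original}, with $B$ in place of $N$.

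\emph{Initialization and (C1).} For (C1), the finite initialization from Lemma~\ref{app:initialization-kl} places $x^0$ in $\E\cap U$ and leaves every sparse block unchanged. Hence $x^0\in\dom J\cap U$. The control $C$ is nonnegative. It is smooth on the open set $U$, since each $q_r>0$ there, and it equals $+\infty$ off $U$. Its lower semicontinuity was shown in the proof of Theorem~\ref{thm:compactness}. $H$ is polynomial and $C$ is rational with nonvanishing denominators on $U$, so $\nabla f$ is locally Lipschitz on $U$. Finally, $J\ge0$ is bounded below.

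\emph{(C2) and (C3).} For (C2), Theorem~\ref{thm:compactness} gives compact sublevels contained in $U$. For the KL property, Lemma~\ref{app:initialization-kl} covers points of $\E\cap U$. Every point of $\dom\partial\Phi$ lies in $\E\cap U$, because $\dom\Phi=\E\cap U$. For (C3), Lemmas~\ref{app:safe-last-lemma} and \ref{app:safe-first-lemma} show that $c=C_{\mathrm{safe}}\ge 2\delta$ and that the projection is a global block minimizer of \eqref{e12}; for $F_i=\delta_{E_i}$ this projection is exactly the prox step. The same lemmas give the descent test \eqref{eq:descenttest} and show that the last-factor update stays in $U$; updates of the other blocks leave $A_N$, and hence membership in $U$, unchanged. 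The uniform local bound on the coefficients is where I expect the only real subtlety. Rather than proving a relative-neighborhood bound pointwise, I would use Proposition~\ref{app:safe-uniform-normal}. It gives a uniform $c_{\max}$ on the whole sublevel $K_{\Phi(x^0)}$, which contains all mixed states, and this is all the proof of Theorem~\ref{thm:boundedness} actually uses. Equivalently, the bounds $C_j(1)$ and $C_{r\vartheta_0}$ are continuous on $\E\cap U$ and hence locally bounded. Proposition~\ref{app:safe-uniform-normal} also supplies \eqref{app:a1a2} directly, which matches Property~\ref{prop:framework}(i),(iii).

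\emph{(C4) and the convergence conclusions.} For (C4), take $\Heff$, $W$ and $\Qeff$ from \eqref{eq:comparisonspaces} and \eqref{eq:effective}. Then $\Qeff\subseteq\Heff$, and Lemma~\ref{thm:geometry} gives $\nabla C\in W$. With all conditions verified, Theorem~\ref{thm:controlled-global} yields finite length \eqref{eq:finitelength}, finite length of the mixed path, and convergence to some $x^*\in K_M\subset\E\cap U$. Theorem~\ref{thm:original} then gives $J(x^k)\to J(x^*)$ and $-\nabla C(x^*)\in\partial J(x^*)\cap W(x^*)$, and this certificate annihilates $\Heff(x^*)\supseteq\Qeff(x^*)$. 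The residual statement $r_{\mathrm{eff}}(x^k)\to0$ follows from \eqref{eq:controlled-transfer}, or equivalently \eqref{eq:transfer}, together with $\|p_{x^{k}}\|\le b\,d_{k-1}\to0$.

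\emph{Graph action.} The lower bound on $\|LA_N\|_F$ is \eqref{app:graph-lower} applied with $M=\Phi(x^0)$ at every iterate. It passes to $x^*$ either by continuity of $A_N\mapsto\|LA_N\|_F$ or because $x^*\in K_M$.
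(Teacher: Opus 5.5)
Your proposal is correct and follows the paper's own proof. You verify (C1)--(C4) through Theorem~\ref{thm:compactness}, Lemma~\ref{app:initialization-kl}, the safe-parameter lemmas with Proposition~\ref{app:safe-uniform-normal}, and Lemma~\ref{thm:geometry}, then apply Theorems~\ref{thm:controlled-global} and \ref{thm:original} with $B$ blocks and read off the graph bound from \eqref{app:graph-lower} on $K_{\Phi(x^0)}$, exactly as the paper does, with two accurate small refinements. Your treatment of the local-boundedness clause in (C3) via the continuous majorants $\beta C_j(1)$ and $\beta\max_r C_{r\vartheta_0}$ is valid, and you are right that the stated conclusions need only $\Qeff\subseteq\Heff$ and $\nabla C\in W$, so Proposition~\ref{app:fiber-preservation} is not logically required here.
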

\begin{proof}
Theorem~\ref{thm:compactness} and Lemma~\ref{app:initialization-kl}
verify (C1)--(C2), including the independent core bounds.
The explicit parameters and Proposition~\ref{app:safe-uniform-normal}
verify (C3). Lemma~\ref{thm:geometry} and
Proposition~\ref{app:fiber-preservation} identify the original effective
directions and prove (C4): the scale projection preserves every actual
reconstruction change and $\nabla C\in W$.
The general finite-length result, Theorem~\ref{thm:controlled-global},
therefore applies to all original factors and the independent core.
Proposition~\ref{app:effective-certificate} transfers its true subgradient
certificate to $J$, giving Eq.~(\ref{eq:mainlimit}) and effective criticality
at the same finite limit. The positive graph bound follows from
Eq.~(\ref{app:graph-lower}).
\end{proof}

\subsection{Full original criticality for ordinary factorization}
\label{app:ordinary}

When the graph term vanishes identically, put
\begin{equation}
\begin{aligned}
 H^{(0)}(x)&=\frac12\|X-Z(x)\|_F^2,&
 J^{(0)}&=H^{(0)}+\delta_{\E},\\
 C^{(0)}&=\frac\mu4\sum_{i<N,r}(s_{ir}-s_{Nr})^2,&
 \Phi^{(0)}&=J^{(0)}+C^{(0)}.
\end{aligned}
 \label{app:ordinary-merit}
\end{equation}
Here $Z$ is the CP reconstruction; $N=2$ includes nonnegative matrix
factorization. The last factor has only nonnegativity and the other original
budgets remain unchanged.

\begin{corollary}[Ordinary original criticality]
\label{cor:ordinary}
The function $\Phi^{(0)}$ has compact finite sublevels and the local concave
K\L{} property. From every $x^0\in \E$, the cyclic method with one safe original
projection per block has finite total original-variable length and converges to
a finite $x^*\in \E$ with
\begin{equation}
 0\in\partial J^{(0)}(x^*),\qquad
 \|a_{ir}^*\|=\|a_{Nr}^*\|\quad(i<N,r).
 \label{app:ordinary-conclusion}
\end{equation}
The original objective values converge to $J^{(0)}(x^*)$, and its full original
subgradient residual tends to zero.
\end{corollary}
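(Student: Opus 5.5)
The plan is to verify conditions (C1)--(C4) for $\Phi^{(0)}$ with $U$ equal to the whole space, invoke Theorems~\ref{thm:descent}--\ref{thm:original}, and then apply Corollary~\ref{cor:full} to obtain full criticality.

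\textbf{Compactness and KL.} There is no graph term and no set $U$ to leave, so (C1) holds with $C^{(0)}$ smooth and nonnegative everywhere. On a sublevel $\{\Phi^{(0)}\le M\}$ we have $\|Z(x)\|_F\le T_M=\|X\|_F+\sqrt{2M}$. Hence, componentwise, $\prod_{i\le N}\|a_{ir}\|\le T_M$ by the entrywise nonnegative domination used in \eqref{app:component-bound}. The balance term gives $|s_{ir}-s_{Nr}|\le 2\sqrt{M/\mu}$ for all $i<N$. If $s_{Nr}$ exceeded $2\sqrt{M/\mu}+T_M^{2/N}$, every $s_{ir}$ would exceed $T_M^{2/N}$, which contradicts the product bound. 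Therefore all columns are bounded, and closedness follows from lower semicontinuity. For semialgebraicity and the local power KL inequality, I would repeat the arguments of Theorem~\ref{thm:compactness} and Lemma~\ref{app:initialization-kl} verbatim, localizing by a closed ball. Any $x^0\in\E$ is admissible; no initialization step is needed.

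\textbf{Safe parameters.} I would use the first-factor formulas \eqref{app:first-core-curvature}--\eqref{app:safe-first-core} for every block, including the last block $A_N$. For $A_N$, the relevant quartic terms are $(s_{ir}-s_{Nr})^2$, summed over $i<N$, and its projection is $[\cdot]_+$. Lemma~\ref{app:safe-first-lemma} and Proposition~\ref{app:safe-uniform-normal} then go through unchanged, because they used only \eqref{app:quartic-hessian}, the exact loss curvature, and compactness of the sublevel. This verifies (C3). Theorem~\ref{thm:controlled-global} then gives finite length and convergence to some $x^*$ with $0\in\partial\Phi^{(0)}(x^*)$. Theorem~\ref{thm:original} gives $J^{(0)}(x^k)\to J^{(0)}(x^*)$.

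\textbf{Full criticality.} This is the step I expect to be the main obstacle. I would apply Corollary~\ref{cor:full} with $\Sscale(x)$ the range of the CP scale velocity map from \eqref{app:cp-scale-generators}, which is spanned by the $d_{ir}$. There are two inclusions to check.

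First, $\partial J^{(0)}(x)\subseteq\Sscale(x)^\perp$. Here $\ip{\nabla H^{(0)}}{d_{ir}}=0$ because the scale curves preserve $Z$ and there is no graph contribution, in contrast to \eqref{eq:scalederivative}. Every limiting normal from Lemma~\ref{app:true-normal-formula} annihilates $d_{ir}$, because $d_{ir}$ vanishes on zero coordinates and normal components vanish on positive coordinates. The delicate point is a limiting cone $\mathcal N(T)$ with $T\supsetneq I(x)$: its entries on $T$ vanish, and $d_{ir}$ is supported in $I(x)\subseteq T$, so the pairing is zero.

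Second, $\nabla C^{(0)}(x)\in\Sscale(x)$. Columnwise, $\nabla C^{(0)}$ places $\mu(s_{ir}-s_{Nr})a_{ir}$ on $a_{ir}$ and $-\mu\sum_{i<N}(s_{ir}-s_{Nr})a_{Nr}$ on $a_{Nr}$. This is exactly $\sum_i\mu(s_{ir}-s_{Nr})d_{ir}$, so it lies in the range.

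Corollary~\ref{cor:full} then yields $0\in\partial J^{(0)}(x^*)$ and the residual bound $\operatorname{dist}(0,\partial J^{(0)}(x^{k+1}))\le\|p_{x^{k+1}}\|\to0$. It also yields $\nabla C^{(0)}(x^*)=0$, which means $\sum_i\mu(s_{ir}^*-s_{Nr}^*)d_{ir}(x^*)=0$. Reading this on the $a_{ir}$ blocks gives $(s_{ir}^*-s_{Nr}^*)a^*_{ir}=0$. Whenever $a^*_{ir}\neq0$, this forces $s^*_{ir}=s^*_{Nr}$.

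The remaining case $a^*_{ir}=0$ needs a separate argument. Suppose $a^*_{ir}=0$ but $a^*_{Nr}\neq0$. The last-block component then reads $-\mu\sum_{i<N}(s^*_{ir}-s^*_{Nr})a^*_{Nr}=0$, with the sum ranging over all $i<N$. I would combine this with the equalities already obtained for the nonzero columns. For $N=2$ this directly forces $s^*_{Nr}=s^*_{1r}=0$, a contradiction. For general $N$, the nonzero columns contribute zero to the sum, so each zero column contributes $s^*_{Nr}>0$, and the sum cannot vanish. Hence $a^*_{Nr}=0$ as well, and the norm identity in \eqref{app:ordinary-conclusion} holds in every case.
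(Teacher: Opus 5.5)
Your proposal is correct and follows the paper's own proof essentially step for step. It uses the same product-and-balance bound $s_{Nr}\le D_M+T_M^{2/N}$ for compact sublevels, the same localized K\L{} argument, and the same quartic-Hessian safe parameters with $a_{Nr}$ as the balance reference. It also uses the same orthogonal splitting, namely $\partial J^{(0)}(x)\subseteq\Sscale(x)^\perp$ with $\nabla C^{(0)}(x)=\mu\sum_{i<N,r}(s_{ir}-s_{Nr})d_{ir}(x)\in\Sscale(x)$, and the same case analysis for the balance identity.

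The differences are cosmetic. You derive the orthogonality of limiting normals from the support formula of Lemma~\ref{app:true-normal-formula}, whereas the paper tests two-sided scale curves in the Fr\'echet inequality. You also cite Corollary~\ref{cor:full} instead of inlining its Pythagoras step. That step needs only (C1)--(C3), smooth addition and your two inclusions, so the (C4) you announce but never verify is not actually needed.
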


\begin{proof}
For $\Phi^{(0)}\leq M$, let $T_M=\|X\|_F+\sqrt{2M}$ and
$D_M=2\sqrt{M/\mu}$. Nonnegative rank-one components give
$\prod_i\|a_{ir}\|\leq T_M$, and the balance terms give
$|s_{ir}-s_{Nr}|\leq D_M$. If $s_{Nr}>D_M+T_M^{2/N}$, every squared column
norm in that component exceeds $T_M^{2/N}$, contradicting the product bound.
Therefore
\begin{equation}
 s_{Nr}\leq D_M+T_M^{2/N},\qquad
 s_{ir}\leq2D_M+T_M^{2/N}\quad(i<N).
 \label{app:ordinary-bounds}
\end{equation}
No column norm is divided out. Closedness of $\E$ and continuity of the polynomial
finite part prove compactness. Properness follows from feasibility of zero,
and the finite support description proves semialgebraicity. The localization
argument in Lemma~\ref{app:initialization-kl} applies on any closed ball and
gives the local concave K\L{} property.

For an explicit one-projection implementation, use \eqref{app:loss-curvature}
for the CP loss, the radius set \eqref{app:radius-set}, and
\begin{equation}
 \begin{aligned}
 \ell^{(0)}_{\mathrm{ctrl},i}(t)
    &=\mu\max_r\{3(\|a_{ir}\|+t)^2-s_{Nr}\}&&(i<N),\\
 \ell^{(0)}_{\mathrm{ctrl},N}(t)
    &=\mu\max_r\{3(N-1)(\|a_{Nr}\|+t)^2-\sum_{i<N}s_{ir}\}.
 \end{aligned}
 \label{app:ordinary-curvature}
\end{equation}
With $g_j=\nabla_j(H^{(0)}+C^{(0)})(Y)$, take
\begin{equation}
\begin{aligned}
 C^{(0)}_{\mathrm{safe}}(Y,j)&=\beta\min_{t\in\mathcal T_j}
    \max\left\{2\delta,\frac{2\|g_j\|_F}{t},
       \ell_{\mathrm{loss},j}+\ell^{(0)}_{\mathrm{ctrl},j}(t)+\delta\right\},\\
 Y_j^+&\in P_{E_j}\big(Y_j-g_j/C^{(0)}_{\mathrm{safe}}(Y,j)\big).
\end{aligned}
 \label{app:ordinary-safe}
\end{equation}
The quartic identity \eqref{app:quartic-hessian} proves
\eqref{app:ordinary-curvature}. Projection comparison bounds the displacement
by a minimizing radius. Taylor's formula therefore gives block decrease
$\delta\|d_j\|^2/2$. These estimates apply to the nonnegative last factor as
well as to the sparse first factors. Since $1\in\mathcal T_j$ and the initial
sublevel is compact, all selected curvatures are uniformly bounded. The original
normal formula \eqref{app:actual-normal}, now for
$f^{(0)}=H^{(0)}+C^{(0)}$, gives a subgradient $p_{x^{k+1}}$ of $\Phi^{(0)}$
satisfying both estimates in \eqref{app:a1a2}. The finite-length K\L{} argument
in the main convergence theorem applies to this merit. It yields a whole finite
limit $x^*$ and $0\in\partial\Phi^{(0)}(x^*)$.

It remains to identify original criticality. Let $\Sscale(x)$ be the span of the true
CP generators \eqref{app:cp-scale-generators}. The ordinary loss is constant
on their two-sided feasible scale curves. Every Fr\'echet normal annihilates
their velocities by testing positive and negative parameters in its defining
inequality. For a limiting normal, use the defining regular-normal sequences
and continuity of each generator. Thus
\begin{equation}
 g\in\partial J^{(0)}(x)\ \Longrightarrow\ g\in \Sscale(x)^\perp.
 \label{app:ordinary-orthogonality}
\end{equation}
Writing $t_{ir}=s_{ir}-s_{Nr}$, differentiation gives
\begin{equation}
 \nabla C^{(0)}(x)=\mu\sum_{i<N,r}t_{ir}d_{ir}(x)\in \Sscale(x).
 \label{app:ordinary-force}
\end{equation}
For every $w\in\partial\Phi^{(0)}(x)$, smooth addition yields
$g=w-\nabla C^{(0)}(x)\in\partial J^{(0)}(x)$, so
\begin{equation}
 \|w\|^2=\|g\|^2+\|\nabla C^{(0)}(x)\|^2.
 \label{app:ordinary-pythagoras}
\end{equation}
Consequently, the original subgradient $p_{x^{k+1}}-\nabla C^{(0)}(x^{k+1})$ has norm bounded by $\|p_{x^{k+1}}\|\to0$.
At the limit, use $w=0$ in \eqref{app:ordinary-pythagoras} to obtain
$\nabla C^{(0)}(x^*)=0$ and $0\in\partial J^{(0)}(x^*)$.

The first-factor equations are $t_{ir}a_{ir}=0$ and the last-factor equation is
$(\sum_{i<N}t_{ir})a_{Nr}=0$. If $a_{Nr}=0$, the first equations force every
$a_{ir}=0$. If $a_{Nr}\ne0$, every nonzero first column has $t_{ir}=0$, whereas
a zero first column would have $t_{ir}=-s_{Nr}<0$. The last equation excludes
the latter case. This proves the balance claim. Finally, $J^{(0)}$ equals the
continuous polynomial loss on $\E$, proving convergence of its original values.
\end{proof}

\section{Numerical experiments}
\label{sec:experiments}
Existing block proximal convergence frameworks use boundedness to obtain whole-sequence convergence, and practical factorizations often impose coercivity through additional penalties or constraints. For the original graph models, Theorem~\ref{app:scale-collapse} establishes a further structural obstruction: full original criticality forces graph collapse. Consequently, these methods do not provide a direct baseline for the convergence and graph-preserving properties established here. We examine CBPL through its convergence trajectories, exact scale transformations and the evolution of graph action and original variables.

\subsection{Experimental settings}
All experiments are carried out in MATLAB R2021b on a computer with an Intel Core i7-10850H CPU at 2.70 GHz and 16 GB RAM, using one computational thread. The code is available at \url{https://github.com/Weifeng-Yang/CBPL}. We use the direct safe parameters in Eqs.~(\ref{app:safe-last}) and (\ref{app:safe-first-core}): each block has one projected update. Each model--data pair is run once from a fixed feasible initialization, with a common observation window of 60 seconds. The same window is used for every curve. The curves retain the recorded iteration times; only the endpoint at 60 seconds is interpolated between its two neighboring observations.

Each problem uses two datasets. For L0SGNMF  and independent-core matrix tri-factorization, we use lp\_ship12l ($1151\times5533$) from the SuiteSparse Matrix Collection \cite{davis2011suitesparse} and BASEHOCK ($1993\times4862$) from the feature-selection data repository.\footnote{\url{https://jundongl.github.io/scikit-feature/datasets.html}} We use the complete matrices. The NMF ranks are $20$ and $50$; the respective core sizes for matrix tri-factorization are $20\times20$ and $30\times20$.

For CP, we use BreastMNIST \cite{yang2023medmnist,aldhabyani2020breast} and microPNW \cite{ni2023pnw}. BreastMNIST uses 100 fixed images, with $8\times8$ pixel block averages of the $224\times224$ source images, giving $28\times28\times100$. microPNW retains 100 frames, 20 uniformly selected bins, three channels and all 50 records, giving $100\times20\times3\times50$. The sample mode is last in each tensor, and both CP ranks are $20$.

Each selected input is scaled to Frobenius norm 10, with its signs retained. We construct a symmetric five-nearest-neighbor graph with weights $\exp(-d_{uv}^2/\tau)$, where $\tau$ is the median positive squared distance among the selected neighbor edges; the larger directed weight is retained when symmetrizing. Its Laplacian is scaled by the maximum weighted degree. The projection $P$ subtracts the mean within each actual connected component. We fix $\alpha=\mu=\epsilon=0.01$, $\delta=10^{-4}$ and use the finite sets in Eqs.~(\ref{app:theta-set}) and (\ref{app:radius-set}). Each sparse block has an element budget of $30\%$ of its entries, rounded down; the graph factor has only nonnegativity. A uniform random feasible initialization is scaled so that nonzero first-factor columns have unit norm, last-factor columns have $h_r=1$, and the independent core has unit Frobenius norm. The six fixed seeds are $20261026$, $20261028$, $20261029$, $20261030$, $20261032$ and $20261034$, in the order of Table~\ref{tab:results}.

The computed stationarity certificate is
\begin{equation}
R_k=\dist\big(0,\nabla(H+C)(x^k)+\widehat N_{\E}(x^k)\big).
\label{eq:numericalcertificate}
\end{equation}
The coordinate normal formula in Eq.~(\ref{app:normal-formula}) evaluates this quantity explicitly. Choosing a minimizing regular normal produces $w^k\in\partial\Phi(x^k)$ with $R_k=\|w^k\|$, hence Eq.~(\ref{eq:transfer}) gives $\reff(x^k)\leq R_k$. We also record $\norm{LA_N^k}_F$, all factor and core norms, the whole-variable displacement $\norm{x^{k+1}-x^k}$ and the accumulated path length $\sum_{j<k}\norm{x^{j+1}-x^j}$.

\subsection{Convergence behavior}
\begin{figure}[t]
\centering
\includegraphics[width=\linewidth,height=0.6\textheight]{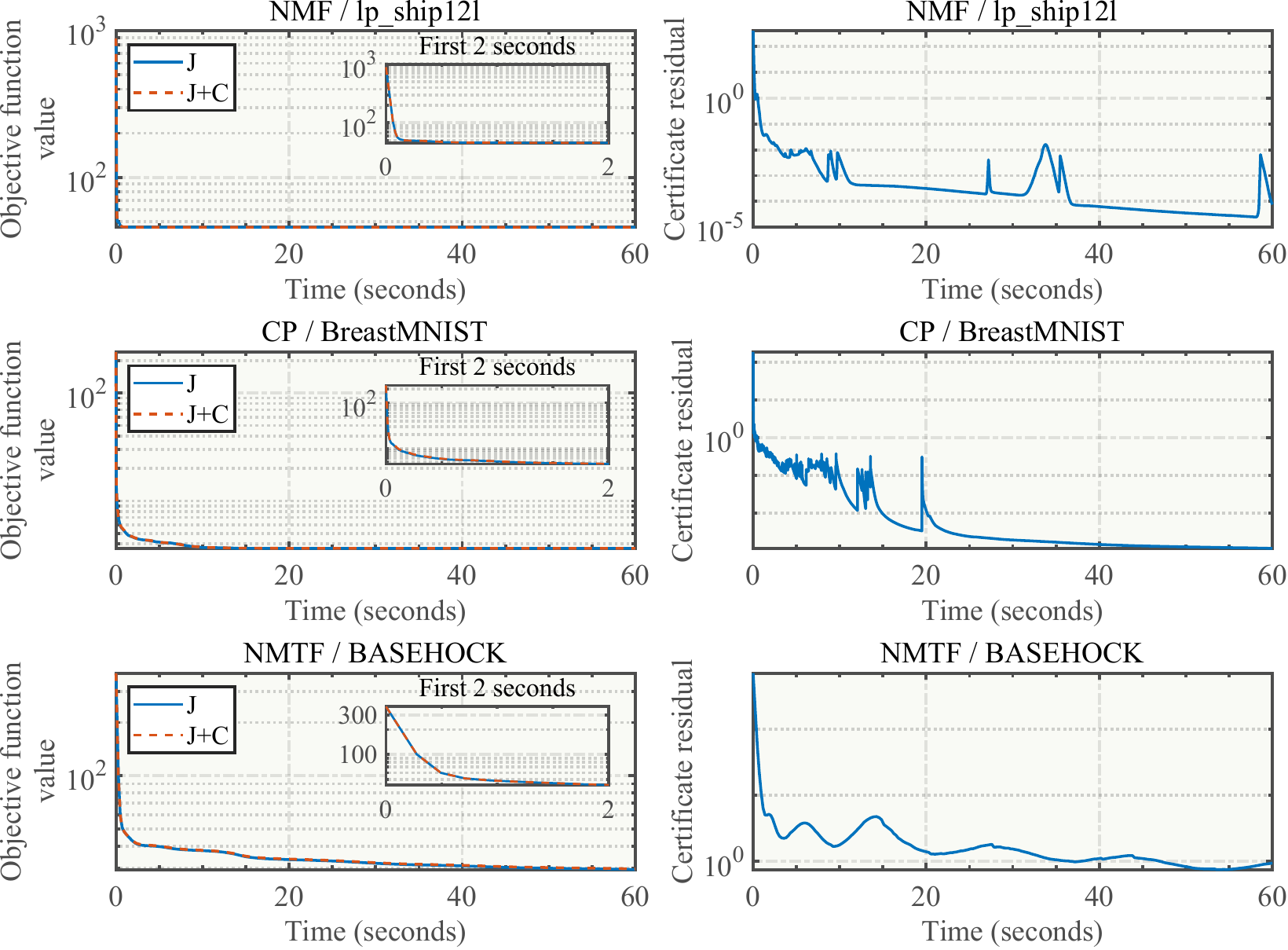}
\caption{CBPL trajectories on three preselected model--data pairs. Each row corresponds to one model. Left: the values of $J$ and $J+C$ over 60 seconds, with an inset showing the first two seconds. Right: the certificate $R_k$ on a logarithmic vertical axis. All curves use direct safe updates.}
\label{fig:convergence}
\end{figure}

Figure~\ref{fig:convergence} shows NMF on lp\_ship12l, CP on BreastMNIST and independent-core matrix tri-factorization on BASEHOCK. The function $J+C$ decreases rapidly and continues to stabilize, while the original objective approaches its final observed level. The certificates decrease by several orders of magnitude over the observation window, with local fluctuations as the active supports and factor scales adjust. The convergence analysis bounds this certificate by the whole-variable displacement and transfers it to the original effective-stationarity condition. The insets resolve the rapid initial descent without discarding the later trajectory.

Table~\ref{tab:results} reports all six model--data pairs. Across these inputs the initial certificates range from approximately $1.69\times10^2$ to $1.43\times10^3$, and their reductions over the common window range from approximately $2.9$ to $6.7$ orders of magnitude. NMF on lp\_ship12l reaches $7.64\times10^{-5}$, CP on BreastMNIST reaches $1.15\times10^{-3}$, and matrix tri-factorization on lp\_ship12l reaches $2.37\times10^{-4}$. The safe updates therefore give substantial progress with explicit block operations across the matrix and tensor models.

\begin{table}[t]
\centering
\caption{Results at 60 seconds. $R$ is the true certificate in Eq.~(\ref{eq:numericalcertificate}); $G=\norm{LA_N}_F$. Each row is one run of CBPL.}
\label{tab:results}
\small
\setlength{\tabcolsep}{3pt}
\resizebox{\linewidth}{!}{%
\begin{tabular}{llcrrrrr}
\toprule
Model & Data & Rank/core & Iterations & $J$ & $J+C$ & $R$ & $G$\\
\midrule
NMF & lp\_ship12l & $20$ & 4382 & 4.574e+01 & 4.574e+01 & 7.635e-05 & 1.547e-02 \\
NMF & BASEHOCK & $50$ & 200 & 2.118e+01 & 2.128e+01 & 2.637e-01 & 3.698e-02 \\
CP & BreastMNIST & $20$ & 18604 & 3.618e+00 & 3.620e+00 & 1.149e-03 & 1.182e+00 \\
CP & microPNW & $20$ & 1648 & 4.091e-01 & 6.274e-01 & 1.373e-01 & 9.245e-01 \\
NMTF & lp\_ship12l & $20\times 20$ & 3433 & 4.615e+01 & 4.615e+01 & 2.368e-04 & 1.689e-02 \\
NMTF & BASEHOCK & $30\times 20$ & 244 & 2.953e+01 & 2.977e+01 & 9.426e-01 & 5.150e-02 \\
\bottomrule
\end{tabular}}
\end{table}

\subsection{Scale degeneracy and graph noncollapse}
\begin{figure}[t]
\centering
\includegraphics[width=\linewidth,height=0.2\textheight]{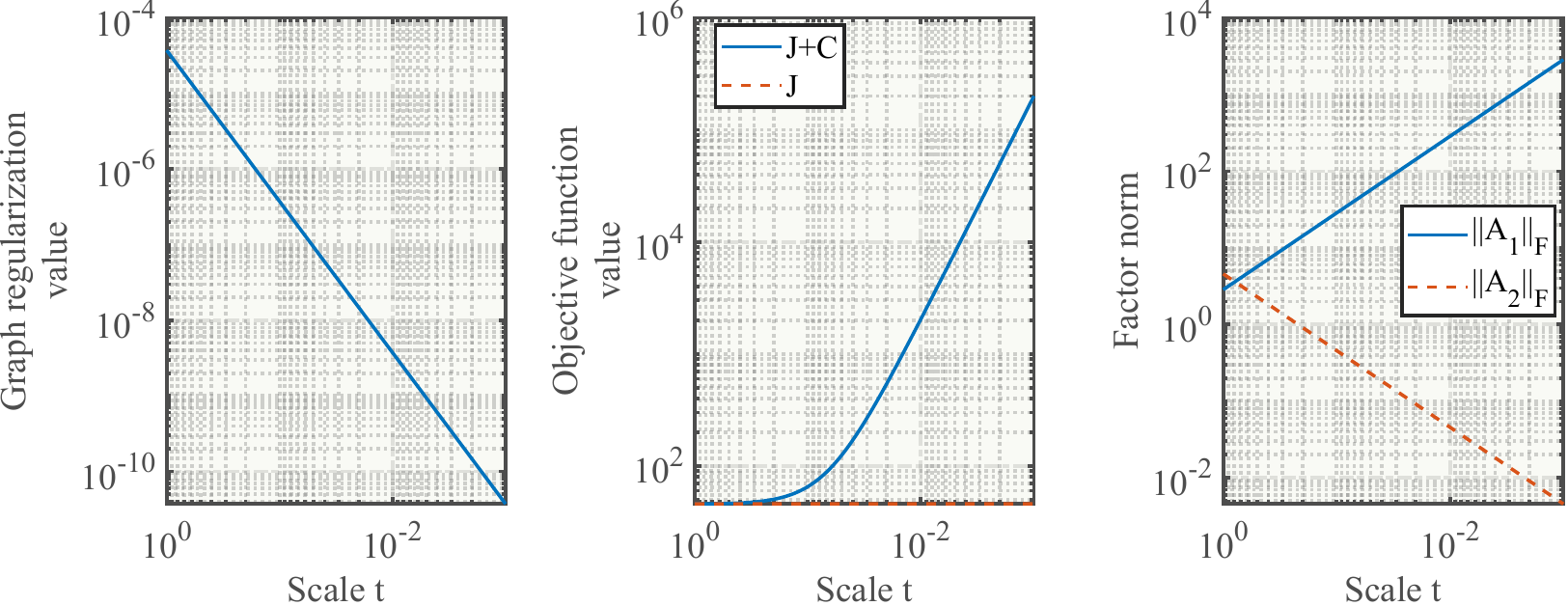}
\caption{The reconstruction-preserving NMF path $A_1(t)=A_1/t$, $A_2(t)=tA_2$, with $t$ decreasing from $1$ to $10^{-3}$. Left: the graph regularization term. Middle: the values of $J$ and $J+C$. Right: the factor norms.}
\label{fig:scalingpath}
\end{figure}

Figure~\ref{fig:scalingpath} evaluates a scale path through the NMF reconstruction, using the original objective and the proposed control. Its reconstruction is exactly unchanged. The graph term decreases by a factor $t^2$, so the original objective rewards this loss of graph action. At the same time, one factor grows as $1/t$ and the other shrinks as $t$. The function $J+C$ increases along the collapsing end of this path, displaying the mechanism that prevents the original scale escape.

\begin{figure}[H]
\centering
\includegraphics[width=\linewidth,height=0.4\textheight]{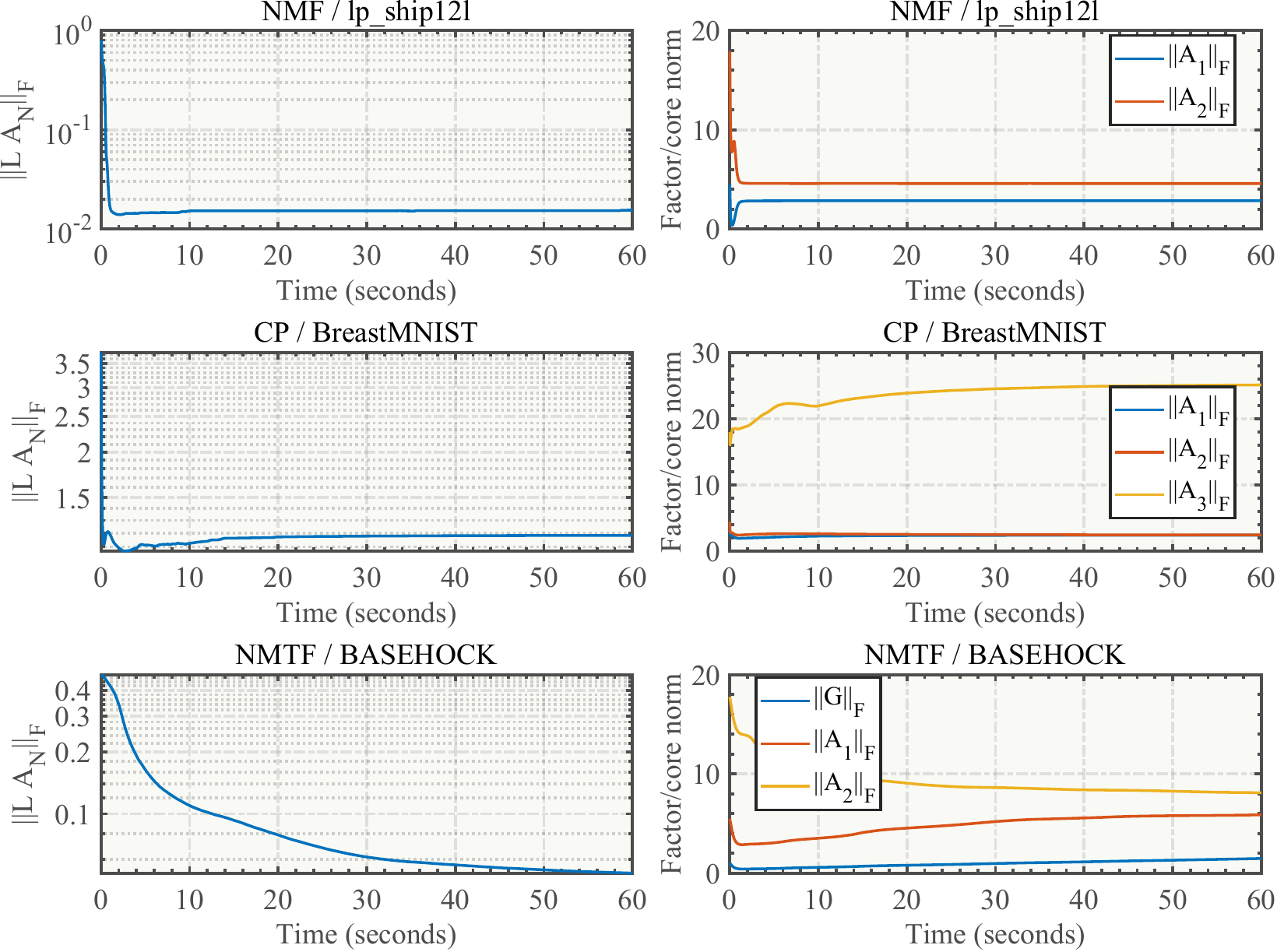}
\caption{Graph action and original-variable scales along the CBPL iterates. Each row corresponds to the same model--data pair as in Figure~\ref{fig:convergence}. Left: $\|LA_N\|_F$. Right: the Frobenius norms of all factors and the independent core.}
\label{fig:structure}
\end{figure}

Figure~\ref{fig:structure} follows actual CBPL iterates. The graph-action curves in the left column remain positive throughout the window; the smallest observed values over the six trajectories range from $1.39\times10^{-2}$ to $1.07$. The right column displays the norms of all factors and the independent core. The maximum observed norm of the complete original block vector ranges from approximately $18.5$ to $29.3$. These curves show how the same updates retain graph action and control factor and core scales, corresponding to the automatic compactness and uniform noncollapse results in Theorems~\ref{thm:compactness} and \ref{thm:global}.

\section{Conclusion}
\label{sec:conclusion}
In this paper, we established a complete degeneracy theory and proposed CBPL for nonconvex and nonsmooth optimization with scale freedom. For graph-regularized sparse nonnegative matrix, CP and independent-core Tucker factorizations, we proved that scale criticality is equivalent to collapse into the graph nullspace and that full original criticality necessarily produces this collapse. This identifies a structural conflict between the conventional criticality target and retention of graph representations.

We developed the general CBPL method and a convergence framework based on its control design conditions. For the graph factorization models, the explicit control construction prevents graph collapse and factor-scale divergence without changing the original objective, feasible set or variables. It gives compact controlled sublevel sets, a uniform positive lower bound on graph action and computable safe parameters for one projected update per block. We proved finite length and whole-sequence convergence of all original factors and the independent core without assuming bounded iterates or coercivity of the original objective. We also established effective stationarity of the original objective through a single true limiting subgradient that annihilates all effective comparison directions. The comparison structure retains genuine reconstruction changes, including those arising at singular and zero structures. For ordinary nonnegative matrix and CP factorizations, the theory recovers whole-sequence convergence to a full critical point. The MATLAB experiments demonstrate the convergence behavior, graph noncollapse and control of the original factor scales.

\bibliographystyle{siamplain}
\bibliography{references}

\end{document}